\newcommand{\FF}{{\mathcal  F}}
\def\XXint#1#2#3{{\setbox0=\hbox{$#1{#2#3}{\int}$ }
\vcenter{\hbox{$#2#3$ }}\kern-.6\wd0}}
\newtheorem{theorem}{\bf Theorem}[section]
\newtheorem{proposition}[theorem]{\bf Proposition}
\newtheorem{lemma}[theorem]{\bf Lemma}
\newtheorem{corollary}[theorem]{\bf Corollary}
\theoremstyle{definition}
\newtheorem{definition}[theorem]{Definition}
\newtheorem{example}[theorem]{\bf Example}
\newtheorem{remark}[theorem]{Remark}
\newtheorem{hypothesis}[theorem]{Hypothesis}
\numberwithin{equation}{section}
\begin{document}

\title[B\^ocher type theorem]{B\^ocher  type theorem for    elliptic equations with    
drift perturbed L\'evy operator}

\maketitle
\begin{center}

  \normalsize
  TOMASZ KLIMSIAK\footnote{e-mail: {\tt tomas@mat.umk.pl}}\textsuperscript{1,2}
   \par \bigskip

  \textsuperscript{1} {\small Faculty of
Mathematics and Computer Science, Nicolaus Copernicus University,\\
Chopina 12/18, 87-100 Toru\'n, Poland }\par \medskip

  \textsuperscript{2} {\small Institute of Mathematics, Polish Academy Of Sciences,\\
 \'{S}niadeckich 8,   00-656 Warsaw, Poland} \par
\end{center}

\begin{abstract}
A classical B\^ocher's theorem  asserts that any positive   
 harmonic function (with respect to the Laplacian)  in the punctured unit ball  
can be expressed, up to the multiplication constant,  as the sum    of 
the Newtonian kernel   and  a positive  function
that is harmonic in the whole unit ball. 
This theorem expresses   one of the fundamental results 
in the theory of isolated singularities and it  
 can be viewed as a statement on the asymptotic behavior 
of  positive harmonic functions near their isolated singularities.
In the paper we generalize this results to drift perturbed L\'evy operators.
We propose a new approach based on the probabilistic potential theory.
It  applies  to
L\'evy operators  for which the resolvent of its perturbation is strongly Feller.
In particular our result  encompasses drift  perturbed  fractional Laplacians
with any stability index  bounded between zero  and two -  the method therefore applies
to subcritical and supercritical cases.

\end{abstract}
\maketitle


\section{Introduction}
\label{sec1}

\subsection{B\^ocher's theorem}

A classical B\^ocher's theorem (see \cite{Bocher})  asserts that any    positive
   function $u$ on $\mathbb R^d$ ($d\ge 3$) that is harmonic  in the  unit punctured ball:
 \begin{equation}
 \label{eq1.1aaa}
 -\Delta u(x)=0,\quad x\in B_1\setminus \{0\}=: B^*_1
 \end{equation}
admits the following decomposition
 \begin{equation}
 \label{eq1.1bbb}
u(x)=h(x)+a|x|^{2-d},\quad x\in B(0,1),
\end{equation}
where  $h:B_1\to \mathbb R^+$ is  harmonic      and $a\ge 0$ is a constant.
The  equality  may be viewed as    a result  on  the  local behavior of  positive solutions to \eqref{eq1.1aaa} 
around isolated singularity.
If $a=0$ 
singularity is removable and if $a>0$, we have $u(x)\sim |x|^{2-d}$ when $|x|\to 0$.
Further generalizations of B\^ocher's theorem  have been made by   Hartman and Wintner (see \cite{HW})
and  Gilbarg and Serrin (see \cite{GS})
for classical solutions to   
\[
a_{ij} u_{x_ix_j}+b_i u_{x_i}+c u=0\quad \text{ in }B^*_1,
\]
assuming  H\"older continuity of  coefficients (non-divergence form operators), and by Serrin and  Weinberger (see \cite{SW})
for positive continuous weak solutions ($u\in H^1_{loc}(B^*_1)$) to  
\[
(a_{ij} u_{x_i})_{x_j}=0\quad \text{ in }B^*_1,
\]
with merely measurable bounded and uniformly elliptic matrix $(a_{ij})$ (divergence form operators, see also \cite{Serrin1,Serrin2}). 
The first result on   B\^ocher's type theorem for  distributional  supersolutions
is due to  Brezis and Lions  \cite{BL}: assume that $u\in L^1_{loc}(B^*_1)$ is positive, $\Delta u\in L^1_{loc}(B^*_1)$
in the sense of distributions and   
\[
\Delta u\le cu+f\quad\text{a.e. in } B_1, 
\]
for some $c\ge 0$ and $f\in L^1_{loc}(B_1)$. Then $u\in L^1_{loc}(B_1)$, and 
\[
-\Delta u= \varphi+a\delta_{0}\quad\text{in } \mathscr D'(B_1)
\]
for some $\varphi\in L^1_{loc}(B_1)$ and $a\ge 0$ (in the paper $d\ge 2$, 
and note that one easily shows that the result does not hold for $d=1$). 
Noticing that $-\Delta |x|^{2-d}=c\delta_0$ in $\mathscr D'(\mathbb R^d)$ it is readily seen that
the result by Brezis and Lions is a generalization of the one by B\^ocher.
These kinds of results became  fundamental for the theory of isolated singularities
that has been extensively studied since 1950'. We restrict ourself to mentioning 
Gilbarg and Serrin \cite{GS}, Serrin \cite{Serrin1,Serrin2}, DeGiorgi and   Stampacchia \cite{DS},  
Serrin and Weinberger \cite{SW}, Lions \cite{Lions}, Brezis, Veron \cite{BV}, 
Veron \cite{Veron,Veron1}, Aviles \cite{Aviles}, Bers \cite{Bers}, Gidas and Spruck \cite{GiSp}, 
Caffarelli, Gidas and Spruck \cite{CGS}, Congming Li \cite{Congming}, Chen and Lin 
\cite{CL} (here $K$ is not a singleton), Labutin \cite{Labutin},   Barras and Pierre \cite{BP}, Vazquez and Veron
\cite{VV}, Caffarelli, Yanyan Li, Nirenberg \cite{CLN}, Xiong \cite{Xiong}.
For recent results on B\^ocher's type theorems, see e.g.  
\cite{LW,Taylor,Wan} and the references therein.

\subsection{B\^ocher's theorem for the fractional Laplacian}
In the last decade the problem of local behavior of  solutions to semilinear equations with isolated singularities
for the fractional Laplacian  has attracted 
a considerable interest  with emphasis on the fractional Lane-Emden equations
and fractional Emden-Fowler equations. We mention here the paper by Caffarelli, Yin  and  Sire \cite{CJS}, Gonzalez, Mazzeo and Sire \cite{GMS}, DelaTorre, del Pino, Gonz\'alez and Wei   \cite{DDGW},  Chen and Quaas \cite{CQ}, DelaTorre and Gonz\'alez \cite{DG},
Jin,  de Queiroz,  Sire and Xiong  \cite{JDSX}, Ao, DelaTorre, Gonz\'alez and   Wei \cite{ADGW}, 
Yang and Zou \cite{YZ}, Li and Bao \cite{LB}, Chen and Veron \cite{CH}. In most of the papers on the subject 
and all the papers mentioned here except \cite{CQ} the Caffarelli-Silvestre extension method has been
applied. This method although very fruitful is limited mostly to the fractional Laplacian. 
In \cite{CQ} the authors prove  and apply  B\^ocher's  type theorem for the fractional Laplacian.

To the best of our knowledge, there are only two papers  devoted to  B\^ocher's type theorems for non-local equations.
In \cite{LWX} Congming Li, Zhigang Wu and Hao Xu  studied  behavior  of positive $s$-harmonic  functions in $B^*_1$ (i.e. harmonic with respect to
the fractional Laplacian $\Delta^s:=-(-\Delta)^s$ with $s\in (0,1)$)
while in \cite{LLWX} Congming Li, Chenkai Liu, Zhigang Wu and Hao Xu were concerned  with positive distributional solutions to the following inequality:
\begin{equation}
\label{eq1.0}
-\Delta^s u+b\cdot\nabla u+ cu \ge 0\quad\text{in}\quad B^*_1,
\end{equation}
with $s\in (1/2,1)$,  $b\in C^1(B_1)$ and $c\in L^\infty(B_1)$.
They proved that if $u\in L^1_{w_s}(\mathbb R^d)$, with $w_s(x):= 1/(1+|x|^{d+2s})$, is a distributional solution to \eqref{eq1.0}, then 
\[
-\Delta^su + b\cdot \nabla u+cu=\mu+a\delta_{0}\quad \text{in } \mathscr D'(B_1)
\]
for some positive Radon measure $\mu$ on $B_1$, with $\mu(\{0\})=0$,
and $a\ge 0$. The critical (i.e. $s=1/2$) and supercritical (i.e. $s\in (0,1/2)$) cases have been left as 
open problems. Their method, that is a modification of Lions' proof,
falls short of providing the proof of B\^ocher's theorem 
in the full range of exponent $s\in (0,1)$ since  for $s$ not exceeding $1/2$ 
the Green function of $\Delta^s$ does not belong to the space $W^{1,1}_{loc}(\mathbb R^d)$,
which was one of  the crucial points of the method.

\subsection{A new method for a wide class of local,  non-local and mixed operators }
In the present paper we propose a completely new approach to the proofs
of B\^ocher's type theorems  based on the probabilistic potential theory. The approach allows us to 
deal with a wide class of integro-differential operators including, as  particular examples, 
\begin{itemize}
\item the fractional Laplacians $\Delta^s$ with any $s\in (0,1)$ (see Example \ref{ex11});
\item L\'evy operators  with singular  (with respect to the Lebesgue measure)
L\'evy measure $\nu$ (see \eqref{L}), e.g. cylindrical L\'evy operators (see Example \ref{ex12});
\item Kolmogorov type operators with degenerate diffusion part (local and mixed, see Example \ref{ex13});
\item mixed local and non-local operators, e.g. $\Delta+\Delta^s$ (see Example \ref{ex14}).
\end{itemize} 
In particular, in some instances (e.g. degenerate diffusion matrix) our results are new even for local operators.
We believe that this result may pave the way for studying the problem of 
isolated singularities for a much broader class of non-local 
operators than that determined by the applicability of the Caffarelli-Silvestre extension procedure.
 
Let $D\subset \mathbb R^d$  be a bounded open set and $d\ge 1$
(note that we do not impose any restrictions on the dimension
but  this is not contradictory to the result by Brezis and Lions; the dimension constraint is in a sense 
forced by a specific operator).
We shall study   non-negative  solutions 
to the  inequality
\begin{equation}
\label{eq1.1}
-Au+b\cdot\nabla u+\lambda \ge 0\quad\text{in}\quad \mathscr D'(D\setminus K),
\end{equation}
where  $A$ is a  L\'evy operator,   $b:D\to \mathbb R^d$ is a bounded and Lipschitz continuous function, $\lambda$ is a Radon measure on $D$ and 
 $K$ is a {\em polar} compact set.  Note that    semilinear equations with isolated singularities  
 for  polar sets $K$  not being singletons    have been 
 considered by Brezis and Nirenberg  \cite{BN}, Jin,  de Queiroz,  Sire and Xiong  \cite{JDSX}, 
 and Chen and Lin \cite{CL}.  
The class of L\'evy operators consists of Fourier multiplier operators $A$:  
\[
\widehat{Au}(\xi):=-\psi(\xi)\hat u(\xi),\quad \xi\in\mathbb R^d,\, u\in C_c^2(\mathbb R^d),
\]
with a  negative definite (in the sense of
Schoenberg, see \cite[Definition 2.14]{BSW}) function $\psi:\mathbb R^d\to\mathbb C$.
Equivalently,  by the L\'evy-Khintchine formula,  the said class  consists of linear operators $A$ of the  form 
\begin{equation}
\label{L}
Au(x)=cu-l\cdot\nabla u+\frac12\text{Tr}(QD^2 u(x))+\int_{\mathbb R^d}(u(x+y)-u(x)-y\nabla u(x)\mathbf1_{B(0,1)})\,\nu(dy),
\end{equation}
where $c\in\mathbb R$, $l\in\mathbb R^d$, $Q$ is a  non-negative definite symmetric  matrix and $\nu$ is a  L\'evy measure - positive Borel  measure on $\mathbb R^d$
such that  $\nu(\{0\})=0$ and $\int_{\mathbb R^d}1\wedge |x|^2\,\nu(dx)<\infty$.
Furthermore,  if we assume additionally that $l=0$ and $\nu$ is  symmetric:  $\nu(dx)=\nu(-dx)$, then 
\[
Au(x)= cu+\frac12\text{Tr}(Q\nabla u(x)\cdot\nabla u(x))+\lim_{\varepsilon\searrow 0} \int_{\mathbb R^d\setminus B_\varepsilon}(u(x+y)-u(x))\,\nu(dy).
\]
The  only restriction that we  make on $A$ is strong Feller property of the resolvent  
generated by
$A-b\cdot\nabla-\kappa I$  for some $\kappa\ge 0$. 
Recall that $K$ is polar if $C(K)=0$, where
\[
C(K):=\sup\{\mu(K): \mu \text{ is a positive Borel measure supported in } K \text{ and } G\mu\le 1\},
\]
and $G$ is the Green function of $A-b\cdot\nabla-\kappa I$.
It is  known that  $C(\{x\})=0$ if and only if  $G(x,x)=\infty$. 
All the above assumptions are easily verified for $A=\Delta^{s}$ with any $s\in (0,1]$
and $K=\{x\}$ for any $x\in \mathbb R^d$ (see Example \ref{ex11}).
The method therefore applies
to subcritical and supercritical cases.  

\subsection{The main results of the paper}
Our main results consist of three theorems.  
Note at this point that local integrability of $u$ on $D\setminus K$ in general is not enough to
guarantee that $-Au+b\cdot\nabla u+\lambda$ be a distribution on $D\setminus K$.
This is due to the presence of the nonlocal part of $A$. Therefore, if $\nu\equiv 0$, we merely assume that $u\in L^1_{loc}(D\setminus K)$,
but for the general case, we introduce
in Section \ref{sec.dis}  the space $\mathscr T_\nu(D)$,
depending on  $\nu,D$,   that ensures that for any $u\in\mathscr T_\nu(D)\cap L^1_{loc}(\mathbb R^d)$,
the functional $-Au+b\nabla u+\lambda$,  acting by the following rule
\[
[-Au+b\nabla u+\lambda](\eta):= \int_{\mathbb R^d}u \,(-A+b\cdot\nabla)^*\eta
+\int_{D\setminus K}\eta\,d\lambda,\quad  \eta\in  C_c^\infty(D\setminus K),
\]
defines a distribution on $D\setminus K$, in other words, $-Au+b\nabla u+\lambda\in\mathscr D'(D\setminus K)$.
Let $\hat  \mu_0$ be  a Radon measure on $D\setminus K$ that represents positive distribution of \eqref{eq1.1}
(Riesz's theorem).
Let $\mu_0$ be the extension of $\hat\mu_0$ to $D$ by letting zero on $K$: $\mu_0(A):=\hat\mu_0(A\cap (D\setminus K))$,
$A\in\mathcal B(D)$.

The first main result  (see Theorem \ref{th.main1})  stands that  for any positive  solution $u$ to \eqref{eq1.1}
measure $\mu_0$ is Radon on $D$, $u\in L^1_{loc}(D)$,   and $u$   solves
\begin{equation}
\label{eq1.1a}
-Au+b\cdot\nabla u+\lambda = \mu_0+\sigma_K\quad\text{in}\quad \mathscr D'(D),
\end{equation}
where  $\sigma_K$ is a   positive Radon measures on $D$ supported in $K$. 
The second theorem  (see Theorem \ref{th.main2}) asserts that   if 
\[
D\ni x\longmapsto \int_DG_D(x,y)\,d\lambda^+(y)\in L^1(D),
\]
then any positive  solution $u$ to \eqref{eq1.1}  admits  finely continuous $\ell^d$-version $\hat u$ in $D\setminus N$,
where $N$ is a polar set described precisely in the assertion,  
and  for any open $V\subset\subset D$  there exists a positive  
function $h_V$ that is harmonic in  $V$  (with respect to $-A+b\cdot\nabla$)  such that 
\[
 \hat u(x)=h_V(x)+ \int_V G_V(x,y)\,\mu_0(dy)+\int_V G_V(x,y)\,\sigma_K(dy)-
 \int_V G_V(x,y)\,\lambda(dy),\quad x\in V,
\]
where $G_V$ is the Green function   for $V$ and $-A+b\cdot\nabla $.
In fact, denoting by $P_V$  the Poisson kernel for $V$ and $-A+b\cdot\nabla$, we have
\[
h_V(x)=\int_{V^c}\hat u(y)P_V(x,dy),\quad x\in V.
\]
In particular, if $\lambda=\mu_0=0$, i.e.  
$-Au+b\cdot\nabla u=0$ in $\mathscr D'(D\setminus K)$,
and $K=\{x_0\}$, then (cf. \eqref{eq1.1bbb})
\[
 \hat u(x)=h_V(x)+G_V(x,x_0),\quad x\in V.
\]
The third main result is a direct consequence  of the method we  use here.
We obtain the following maximum principle (see Theorem \ref{th.main2}):  let $u$ be a positive  solution  to \eqref{eq1.1}
and $\lambda^+\equiv 0$, then for any open $V\subset\subset D$,
\[
\hat u(x)\ge \inf_{y\in D\setminus V} \hat u(y)w_V(x),\quad x\in V,
\]
where
\[
w_V(x):=P_V(x,D\setminus V)=1-\int_{D^c}\int_V G_V(x,y)\nu(y-dz)\,dy,\quad x\in V
 \]
(the last equality is a consequence of the  Ikeda-Watanabe formula). In particular if either strong maximum
principle holds for $A-b\nabla$ (e.g. when $G_D(x,y)>0,\, x,y\in D$), or more directly,    $\nu(y-D\setminus \overline{V})>0$ a.e. in $y\in V$, then
\[
\inf_{x\in V}w_V(x)>0.
\]
Observe that in case $A$ is local, i.e. $\nu\equiv 0$, we have  $w_V\equiv 1$. Thus, interestingly,  in this case 
we get estimates independent of $b$ (cf. \cite[Theorem 1.4]{LLWX}).


\section{Preliminaries, L\'evy  and  Feller generators}

Let $d\ge 1$ and $\ell^d$ denote the Lebesgue measure on $\mathbb R^d$.
By $\mathcal B(\mathbb R^d)$ we denote the Borel $\sigma$-algebra of subsets of $\mathbb R^d$.
Let $\mathscr B^*(\mathbb R^d)$ denote the set of all universally measurable functions on $\mathbb R^d$.
For any $V\in \mathcal B(\mathbb R^d)$, we let $\mathscr B(V)$ denote the family
of all numerical functions $f$ on $V$, i.e. $f:V\to \overline{\mathbb R}:=\mathbb R\cup\{+\infty\}\cup\{-\infty\}$,
that are Borel measurable. 
For given $r>0$ and $x\in\mathbb R^d$ we let $B_r(x):=\{y\in\mathbb R^d: |x-y|<r\}$,
where $|x|$ denote the Euclidean  norm. We also use the shorthand $B_r:=B_r(0)$.
For open sets $V\subset\mathbb R^d$, we denote by  $C_c^\infty(V)$
the space of all smooth functions with the support included in $V$. 
We  write   $\mathscr D(V):= C_c^\infty(V)$.
Let $C_0(\mathbb R^d)$ denote the set of all continuous functions on $\mathbb R^d$
 vanishing at infinity. We equip $C_0(\mathbb R^d)$ with the  supremum norm:  
 $\|u\|_\infty:=\sup_{x\in\mathbb R^d}|u(x)|,\, u\in C_0(\mathbb R^d)$. 
 For a given class of numerical functions  $\mathscr A$ we denote by $p\mathscr A$ the subclass
 of $\mathscr A$ consisting of non-negative  functions (i.e. $[0,+\infty]$ valued).

Recall that a semigroup of bounded linear operators $(T_t)$ on $C_0(\mathbb R^d)$, i.e. $T_0=I$ and $T_{s+t}=T_s\circ T_t,\,s,t\ge 0$,
is called a {\em Feller semigroup}
provided that:
\begin{enumerate}[1)]
\item (Markovianity)   $f\in C_0(\mathbb R^d)$ and $0\le f\le 1$ implies  $0\le T_tf\le 1$ for any $t>0$;
\item (strong continuity)  for any $f\in C_0(\mathbb R^d)$,
\[
\|T_tf-f\|_\infty\to 0,\quad t\to \infty.
\]
\end{enumerate}
A linear operator $B$ on $C_0(\mathbb R^d)$ with a dense domain $\mathscr D(B)\subset C_0(\mathbb R^d)$
is called a {\em Feller generator} if it generates a Feller semigroup on $C_0(\mathbb R^d)$.
We say that a set $\mathcal C \subset C_0(\mathbb R^d)$ is a {\em core} of a Feller generator $(B,\mathscr D(B))$
if  $(B,\mathcal C)$ is closable in $C_0(\mathbb R^d)$ and its closure equals $(B,\mathscr D(B))$.
It is well known (see e.g. \cite[Corollary 2.10]{BSW}) that the closure of $(A,C_c^2(\mathbb R^d))$,  where $A$ is a L\'evy operator,
yields  a Feller generator. 

In what follows $A$ denote a L\'evy operator of the form \eqref{L}, with  $c\equiv 0$. 
Furthermore, by $A^*$ we denote the L\'evy operator that admits representation \eqref{L} with 
$(l,Q,\nu)$ replaced by  $(l^*,Q,\nu^*)$, where $l^*:= -l$ and $\nu^*(dx):=\nu(-dx)$.
We let $(A,\mathscr D(A))$, $(A^*,\mathscr D(A^*))$ denote the closures of
$(A,C_c^2(\mathbb R^d))$, $(A^*,C_c^2(\mathbb R^d))$, respectively,  and  
 by  $(S_t)$, $(S^*_t)$  their respective  semigroups on $C_0(\mathbb R^d)$.

\begin{remark}
\label{rem.adj}
By \cite[Proposition II.1]{Bertoin}   for any $\eta_1,\eta_2\in \mathscr D(A)\cap \mathscr D(A^*)$,
\begin{equation}
\label{eq.dual}
\int_{\mathbb R^d}A\eta_1\eta_2=\int_{\mathbb R^d}\eta_1A^*\eta_2
\end{equation}
provided that the integrals are well defined and finite. As a result
\[
\int_{\mathbb R^d}S_t\eta_1\eta_2=\int_{\mathbb R^d}\eta_1S^*_t\eta_2,\quad t\ge 0
\]
for any $\eta_1,\eta_2\in C_0(\mathbb R^d)$, provided that the integrals are well defined and finite.
 \end{remark}

\subsection{Drift  perturbed L\'evy operators}

We  let  $I:\mathscr B^*(\mathbb R^d)\to \mathscr B^*(\mathbb R^d)$ denote the identity mapping and for given $\kappa\ge 0$,
\[
L_\kappa:= A-\kappa I-b\cdot\nabla.
\]
By \cite[Theorem 137]{Situ}, \cite[Corollary 2.5,Theorem 3.1]{Kurtz}  the martingale problem for 
the operator $(L_\kappa,C_c^2(\mathbb R^d))$ is well posed. As a result, by \cite[Theorem 1.1]{kuhn},
for any $\kappa\ge 0$ there exists a unique Feller generator
$(L_\kappa,\mathscr D(L_\kappa))$  that is the extension of 
$(L_\kappa,C_c^2(\mathbb R^d))$ in $C_0(\mathbb R^d)$.
 For 
\[
\kappa \ge  \kappa_0,\quad \kappa_0:=\|\text{div} b\|_\infty,
\]
we let 
\begin{equation}
\label{eq2.dugen}
L^*_\kappa:= A^*+(\text{div}b-\kappa)I+b\cdot\nabla.
\end{equation}
By the same arguments as for $L_\kappa$ if  $\text{div}(b)\in C_b(\mathbb R^d)$, 
there exists a unique Feller generator
$(L^*_\kappa,\mathscr D(L_\kappa))$  that is the extension of 
$(L^*_\kappa,C_c^2(\mathbb R^d))$ in $C_0(\mathbb R^d)$.
We let $(T_t^\kappa)$ denote the Feller semigroup on $C_0(\mathbb R^d)$
generated by $(L_\kappa,\mathscr D(L_\kappa))$.
Let  $(T^{[\varepsilon],\kappa}_t)$  denote the Feller semigroup on $C_0(\mathbb R^d)$
generated by $(L^{[\varepsilon]}_\kappa,\mathscr D(L^{[\varepsilon]}_\kappa))$  which  is defined as
$(L_\kappa,\mathscr D(L_\kappa))$ but   with $b$
replaced by $b_\varepsilon$ that   is a standard mollification of $b$. 
Furthermore, let 
$(T^{[\varepsilon],\kappa,*}_t)$  denote the Feller semigroup on $C_0(\mathbb R^d)$
generated by $(L^{[\varepsilon],*}_\kappa,\mathscr D(L^{[\varepsilon],*}_\kappa))$  where  
$L^{[\varepsilon],*}_\kappa$  is given by \eqref{eq2.dugen}     with $b$
replaced by $b_\varepsilon$. 
By \cite[Theorem 19.25]{Kallenberg}
$T^{[\varepsilon],\kappa}_t f\to T_t^\kappa f$ for any $f\in C_0(\mathbb R^d)$.
By Remark \ref{rem.adj} for any positive $f\in C_0(\mathbb R^d)$ and $\kappa\ge \kappa_0$
\[
\int_{\mathbb R^d} T_t^\kappa f=\lim_{\varepsilon\to 0^+}\int_{\mathbb R^d} T^{[\varepsilon],\kappa}_tf=\lim_{\varepsilon\to 0^+}\int_{\mathbb R^d} fT^{[\varepsilon],\kappa,*}_t1\le 
\int_{\mathbb R^d} f.
\]
Now, by using Riesz-Thorin interpolation theorem, we conclude that for any $\kappa\ge \kappa_0$ 
$(T_t^\kappa)$  may be extended to
$L^p$-submarkovian semigroup on $L^p$ ($p>1$): 
there exists strongly continuous semigroup $(T_t^{\kappa,(p)})$
on $L^p(\mathbb R^d)$ such that
\[
\|T_t^{\kappa,(p)}f\|_{L^p}\le \|f\|_{L^p},
\]
and a.e.
\[
T_t^{\kappa,(p)}f=T^\kappa_tf,\quad f\in C_c(\mathbb R^d).
\]
We let $(L^{(p)}_\kappa,\mathscr D(L^{(p)}_\kappa))$ denote the operator generated by $(T_t^{\kappa,(p)})$.
One easily checks that $L^{(p)}_\kappa$ is the closure of $(L_\kappa,\mathscr D(L_\kappa)\cap L^p(\mathbb R^d))$.
By \cite[Corollary 1.10.6]{Pazy} for any $p>1$ the dual semigroup  $(T_t^{\kappa,(p'),*})$
is a $C_0$-semigroup of contractions that generates operator $(L^{(p),*}_\kappa, \mathscr D(L^{(p),*}_\kappa))$ that 
is the dual of $(L^{(p)}_\kappa,\mathscr D(L^{(p)}_\kappa))$ (see, \cite[(1.10.1)]{Pazy}).  Summing up,
for  $\kappa\ge \kappa_0$ and $p>1$,
\begin{equation}
\label{eq.lp}
\|T_t^{\kappa,(p)}f\|_{L^p}\le \|f\|_{L^p},\quad \|T_t^{\kappa,(p),*}f\|_{L^p}\le \|f\|_{L^p},\quad f\in L^p(\mathbb R^d).
\end{equation}
It is an elementary check that $L^{(p),*}_\kappa$ is the closure of $(L_\kappa^*,\mathscr D(A^*+b\cdot\nabla)\cap L^p(\mathbb R^d))$
in $L^p(\mathbb R^d)$.

\subsection{Markov processes and related semigroups}
\label{sec.fprs}

Let $\partial$ be a one-point compactification of $\mathbb R^d$
and $\Omega$ denote the set of all functions $\omega:[0,\infty)\to \mathbb R^d\cup\{\partial\}=:\mathbb R^d_\partial$
such that: 
\begin{enumerate}
\item[a)] $\omega$ is right-continuous on $[0,\infty)$ and possesses left limits on $(0,\infty)$ (c\`adl\`ags), 
\item[b)]
if $\omega(t)=\partial$ for some $t\in [0,\infty)$, then $\omega(s)=\partial$ for any $s\ge t$.
\end{enumerate}
We equip $\Omega$ with the Skorochod metric $\rho$ (see  \cite[Section 12]{bil}). Then $(\Omega,\rho)$ is a Polish space.
We also let
\[
X_t(\omega):=\omega(t),\quad \omega\in \Omega,\, t\ge 0,\quad \mathcal F^X_t:=\sigma(X_s: s\le t).
\]
Throughout the paper we adopt the convention that $f(\partial)=0$
for any function $f$ defined on a subset of $\mathbb R^d$.
By \cite[Theorem I.9.4]{BG} for any Feller semigroup $(T_t)$,
 there exists a unique family 
$(\mathbb P_x,\, x\in \mathbb R^d_\partial)$ of Borel probability measures
on $\Omega$ such that:
\begin{enumerate}
\item for any $f\in C_0(\mathbb R^d)$
\[
T_tf(x)=\mathbb E_xf(X_t),\quad x\in\mathbb R^d,\, t\ge 0,
\]
\item for all $x\in\mathbb R^d$, $s,t\ge 0$, we have  $\mathbb E_{X_t}f(X_s)=\mathbb E_x[f(X_{s+t})|\FF_t^X]$.
\end{enumerate}
The second condition says that $(\mathbb P_x,\, x\in \mathbb R^d_\partial)$ is a {\em Markov process} (also called a {\em Markov family}).  Any family $(\mathbb P_x,\, x\in \mathbb R^d_\partial)$ of Borel probability measures
on $\Omega$ satisfying (1)-(2) for a Feller semigroup $(T_t)$ is called a {\em Feller process}.
Here and in what follows we use the standard  notation of probability theory: 
 for any measurable function $Y$ on $\Omega$,
\[
\mathbb E_xY:=\int_{\Omega}Y(\omega)\,\mathbb P_x(d\omega).
\]
Furthermore, $\mathbb E_x[f(X_{s+t})|\FF_t^X]$ denotes the conditional expectation with respect to $\mathbb P_x$, equivalently, 
orthogonal projection of function $f(X_{s+t})\in L^2(\Omega,\FF^X_{s+t},\mathbb P_x)$ 
onto $L^2(\Omega,\FF^X_{t},\mathbb P_x)$ (see \cite[Section 1.3]{Kolokoltsov}).
In what follows by $(\mathbb P^\kappa_x,\, x\in \mathbb R^d_\partial)$, 
we denote the Feller process  related to $L_\kappa$.

Let $(\mathbb P^{*}_x,\, x\in \mathbb R^d_\partial)$ 
denote the Feller process related to the Feller generator $A^*-b\nabla$.
By \cite[Theorem 8.4]{Getoor} for any $\kappa\ge \kappa_0$,
\[
T_t^{\kappa,(p),*}f(x)=\mathbb E_x^{*}f(X_t)e^{\int_0^t(\text{div}b-\kappa)(X_s)\,ds}\quad a.e.
\]
By \cite[Corollary III.3.16]{BG} there exists a Hunt process $(\mathbb P_x^{\kappa,*},x\in\mathbb R^d_\partial)$ (for the definition see \cite[Definition I.9.2]{BG}) such that  
\[
\mathbb E_x^{*}f(X_t)e^{\int_0^t(\text{div}b-\kappa)(X_s)\,ds}=\mathbb E_x^{\kappa,*}f(X_t),\quad x\in\mathbb R^d,\, t>0,\, f\in\mathscr B_b(\mathbb R^d).
\]
In what follows for  non-negative functions $f\in \mathscr B^*(\mathbb R^d)$  we let
\[
P^\kappa_tf(x):=\mathbb E^\kappa_x f(X_t),\quad P^{\kappa,*}_tf(x):=\mathbb E^{\kappa,*}_x f(X_t),\qquad x\in \mathbb R^d.
\]
For any  $f\in\mathscr B^*(\mathbb R^d)$, we let
$P^\kappa_tf(x):= P^\kappa_tf^+(x)-P^\kappa_tf^-(x)$ if $P^\kappa_tf^+(x)<\infty$ or $P^\kappa_tf^-(x)<\infty$, and zero otherwise.
We adopt   analogous convention  for $P^{\kappa,*}_tf$. 
We also let for any  positive  $f\in\mathscr B^*(\mathbb R^d)$
\[
R^\kappa_\alpha f(x):= \int_0^\infty e^{-\alpha t} P^\kappa_t f(x)\,dt, \quad R^{\kappa,*}_\alpha f(x):= \int_0^\infty e^{-\alpha t} P^{\kappa,*}_t f(x)\,dt,\qquad x\in \mathbb R^d.
\]
For each   $R^\kappa_\alpha f$ and $R^{\kappa,*}_\alpha f$, with $f\in\mathscr B^*(\mathbb R^d)$, we adopt the convention 
analogous to that for  $(P^\kappa_tf), (P^{\kappa,*}_tf)$.
Finally, we denote  $R^\kappa:=R^\kappa_0, R^{\kappa,*}:=R^{\kappa,*}_0$. One easily checks  that
\begin{equation}
\label{eq.red1}
P^{\kappa+\alpha}_t=e^{-\alpha t}P^{\kappa}_t,\quad P^{\kappa+\alpha,*}_t= e^{-\alpha t}P^{\kappa,*}_t,\quad R^{\kappa+\alpha}_0=R^{\kappa}_\alpha,\quad R^{\kappa+\alpha,*}_0= R^{\kappa,*}_\alpha
\end{equation}
for any $\alpha,t\ge 0$.
Observe that for any $\varepsilon>0$   
\[
R^{\varepsilon}f(x)=\int_0^\infty e^{-t\varepsilon}P^{0}_tf(x)\,dt,\quad R^{\kappa_0+\varepsilon,*}f(x)=\int_0^\infty e^{-t\varepsilon}P^{\kappa_0,*}_tf(x)\,dt,\quad f\in C_0(\mathbb R^d).
\]
As a result 
\begin{equation}
\label{eq.bres}
R^\varepsilon(\mathscr B_b(\mathbb R^d))\subset \mathscr B_b(\mathbb R^d)\quad 
 R^{\kappa_0+\varepsilon,*}(\mathscr B_b(\mathbb R^d))\subset \mathscr B_b(\mathbb R^d).
\end{equation}

\begin{hypothesis}
\label{eq.stanass}
Throughout the paper we assume that for some $\kappa\ge \kappa_0,   (R^\kappa_\alpha)_{\alpha>0}, \,
(R^{\kappa,*}_\alpha)_{\alpha>0}$ are strongly Feller, i.e.  for any 
$f\in\mathscr B_b(\mathbb R^d)$ the functions  $R^\kappa_\alpha f, R^{\kappa,*}_\alpha f$  belong to  $C_b(\mathbb R^d)$  for each  $\alpha>0$.
\end{hypothesis}

Clearly, by the resolvent identity, $(R^\kappa_\alpha)_{\alpha>0}$ is strongly Feller for any $\kappa\ge 0$.
Furthermore, $R^\kappa_\alpha(x,dy)\ll dy$ for any $x\in\mathbb R^d$, $\alpha> 0$ and $\kappa\ge 0$.
Let $V\in\mathcal B(E)$ and $(S_t)$ be  a semigroup of  positive linear operators on $p\mathscr B^*(V)$.
Recall (see \cite[Section II]{BG}) that   a positive function $u\in \mathscr B^*(V)$
is called $\alpha$-excessive with respect to  $(S_t)$ provided that:
\begin{enumerate}
\item[a)] $e^{-\alpha t}S_tu(x)\le u(x),\, t\ge 0,\, x\in V$,
\item[b)] $\lim_{t\to 0^+}S_tu(x)=u(x),\, x\in V$.
\end{enumerate}
If $\alpha=0$ we simply say that $u$ is $(S_t)$-excessive.
By \cite[Theorem VI.1.4]{BG} for any $\alpha>0$ and $\kappa\ge\kappa_0$ there exists a  unique
function $G^\kappa_\alpha \in p\mathscr B(\mathbb R^d\times \mathbb R^d)$
such that for any $f\in p\mathscr B(\mathbb R^d)$,
\begin{equation}
\label{eq.GGG}
R^\kappa_\alpha f(x)=\int_{\mathbb R^d} G^\kappa_\alpha(x,y)f(y)\, dy,\quad R^{\kappa,*}_\alpha f(x)=\int_{\mathbb R^d} G^\kappa_\alpha(y,x)f(y)\, dy,\quad x,y\in\mathbb R^d,
\end{equation}
and  $G^\kappa_\alpha(\cdot,y)$, $G^\kappa_\alpha(x,\cdot)$ are  $\alpha$-excessive with respect to $(P^\kappa_t)$, $(P^{\kappa,*}_t)$, respectively
for any $x,y\in\mathbb R^d$.
The function $G^\kappa_\alpha(\cdot,\cdot)$ is called $\alpha$-Green's function for $-A+b\nabla+\kappa I$.

Fine topology (resp. co-fine topology) on $\mathbb R^d$ is the coarsest topology that makes all the $(P^\kappa_t)$-excessive (resp. $(P^{\kappa,*}_t)$-excessive) functions continuous. 
By \cite[Proposition 10.8]{Sharpe} a nearly Borel function $u:\mathbb R^d\to [0,\infty]$ is finely-continuous 
on  $\mathbb R^d$ if and only if   the process 
\begin{equation}
\label{eq.efq}
[0,\infty)\ni t\mapsto u(X_t)\in \overline {\mathbb R }\quad \text{ is right-continuous
at } t=0 \, \mathbb P_x\text{-a.s.},\, x\in\mathbb R^d. 
\end{equation}

For any Borel  set $B\subset\mathbb R^d$, we let 
\begin{equation}
\label{eq.hitt}
\tau_B(\omega):=\inf\{t>0: X_t(\omega)\notin B\}.
\end{equation}
For  open  set $V\subset\mathbb R^d$, we let 
 $(\mathbb P^{\kappa,V}_x,\, x\in V\cup\{\partial\})$, $(\mathbb P^{\kappa,*,V}_x,\, x\in V\cup\{\partial\})$
denote the Markov processes (see e.g.  \cite[Theorem III.3.3]{BG} applied to $\mathbb P_x$ and  $M_t:=\mathbf1_{[0,\tau_V)}(t)$)
 that are built  from $(\mathbb P^\kappa_x,\, x\in \mathbb R^d\cup\{\partial\})$, $(\mathbb P^{\kappa,*}_x,\, x\in \mathbb R^d\cup\{\partial\})$
by the {\em killing upon the exit of $V$}, i.e. 
\[
\mathbb P^{\kappa,V}_x:= (k_V)_{\sharp} \mathbb P^\kappa_x,\quad \mathbb P^{\kappa,*,V}_x:= (k_V)_\sharp \mathbb P^{\kappa,*}_x,
\]
- push-forwards of measures $\mathbb P^\kappa_x, \mathbb P^{\kappa,*}_x$ through $k_V$,  where $k_V:\Omega\to \Omega$ is defined as 
\[
k_V(\omega)(t)=\omega(t)\mathbf1_{\{t<\tau_V(\omega)\}}+\partial \mathbf1_{\{t\ge\tau_V(\omega)\}}.
\]
It is known  (see e.g.  \cite[Theorem III.3.3]{BG} applied to $\mathbb P_x$ and  $M_t:=\mathbf1_{[0,\tau_V)}(t)$ again) that   
\begin{equation}
\label{eq.pdef}
\begin{split}
&P^{\kappa,V}_tf(x):=\mathbb E^{\kappa,V}_x [f(X_t)]=\mathbb E^\kappa_x [f(X_t)\mathbf 1_{\{t<\tau_V\}}]
=\mathbb E^0_x [e^{-\kappa t}f(X_t)\mathbf 1_{\{t<\tau_V\}}],
\\&P^{\kappa,*,V}_tf(x):=\mathbb E^{\kappa,*,V}_x [f(X_t)]=\mathbb E^{\kappa,*}_x [f(X_t)\mathbf 1_{\{t<\tau_V\}}]
=\mathbb E^{\kappa_0,*}_x [e^{-(\kappa-\kappa_0) t}f(X_t)\mathbf 1_{\{t<\tau_V\}}]
\end{split}
\end{equation}
define Markov semigroups.
We let
\begin{equation}
\label{eq.rdef}
R^{\kappa,V}_\alpha f(x):=\mathbb E^\kappa_x \int_0^{\tau_V} e^{-\alpha t}f(X_t)\, dt,\quad
R^{\kappa,*,V}_\alpha f(x):=\mathbb E^{\kappa,*}_x \int_0^{\tau_V} e^{-\alpha t}f(X_t)\, dt,\quad x \in V.
\end{equation}
Observe that 
\[
R^{\kappa,V}_\alpha f(x)= \int_0^\infty e^{-\alpha t}P^{\kappa,V}_tf(x)\,dt,\quad R^{\kappa,*,V}_\alpha f(x)=\int_0^\infty e^{-\alpha t}P^{\kappa,*,V}_tf(x)\,dt,\quad x\in V.
\]
One easily checks again (by using \eqref{eq.pdef}) that
\begin{equation}
\label{eq.red2}
P^{\kappa+\alpha,V}_t=e^{-\alpha t}P^{\kappa,V}_t,\quad P^{\kappa+\alpha,*,V}_t= e^{-\alpha t}P^{\kappa,*,V}_t,\quad R^{\kappa+\alpha,V}_0=R^{\kappa,V}_\alpha,\quad R^{\kappa+\alpha,*,V}_0= R^{\kappa,*,V}_\alpha
\end{equation}
for any $\alpha,t\ge 0$.
Notice that for  $f\in p\mathscr B(\mathbb R^d)$, $P^{\kappa,V}_tf\le P^\kappa_tf$ for all $t>0$.
One easily deduces then (see \eqref{eq.lp}) that  for any $p>1$, $(P^{\kappa,V}_t), (P^{\kappa,*,V}_t)$ 
can be extended from $C_b(V)\cap L^p(V)$ to  strongly continuous semigroups on $L^p(V)$, 
that we shall denote by  $(T^{\kappa,(p),V}_t), (T^{\kappa,(p),*,V}_t)$, respectively.
Furthermore   both semigroups are   submarkovian, hence, by the interpolation theorem again, they are contractions on $L^p(V)$.
We denote by $L^{(p)}_{\kappa,V}, L^{(p),*}_{\kappa,V}$ respective generators of  $(T^{\kappa,(p),V}_t), (T^{\kappa,(p),*,V}_t)$.
We further define operators
\[
\Pi^\kappa_V,\,\Pi^{\kappa,*}_V : \mathscr B_b(\mathbb R^d)\to \mathscr B_b(\mathbb R^d)
\]
by
\[
\Pi^\kappa_V(f)(x):= \mathbb E^\kappa_x f(X_{\tau_V}),\quad \Pi^{\kappa,*}_V(f)(x):= \mathbb E^{\kappa,*}_x f(X_{\tau_V}),\quad x\in\mathbb R^d.
\]
By  \cite[Theorem III.3.3]{BG} applied to $\mathbb P_x$ and  $M_t:=e^{-\kappa t}$ one has
\begin{equation}
\label{eq2.killex}
 \mathbb E^\kappa_x f(X_{\tau_V})= \mathbb E^0_x e^{-\kappa\tau_V}f(X_{\tau_V}),\quad x\in\mathbb R^d.
\end{equation}

It is an elementary calculation  (Dynkin's formula; see e.g. \cite[Theorem 12.16]{Sharpe}) that 
\[
R^{\kappa,V}f(x)=R^\kappa f(x)-\Pi^\kappa_V(R^\kappa f)(x),\quad 
R^{\kappa,*,V}f(x)=R^{\kappa,*}f(x)-\Pi_V^{\kappa,*}(R^{\kappa,*}f)(x),\quad x\in\mathbb R^d.
\]
By \cite[Theorem 3.4]{SW1} $(R^{\kappa,V}_\alpha)_{\alpha\ge 0}$  is strongly Feller provided that $V$
is open. 
By \cite[Theorem VI.1.16]{BG}
\[
\int_{\mathbb R^d}g \Pi^\kappa_VR^\kappa f=\int_{\mathbb R^d} f\,\Pi^{\kappa,*}_VR^{\kappa,*}g.
\]
This combined with the previous equations yields
\[
\int_{\mathbb R^d} R^{\kappa,V}f\,g=\int_{\mathbb R^d} f\,R^{\kappa,*,V}g,\quad f,g\in\mathscr B_b(\mathbb R^d).
\]
By \cite[Theorem VI.1.4]{BG},  for any $\alpha>0$ and $\kappa\ge \kappa_0$ there exists $\alpha$-Green's function 
\begin{equation}
\label{eq.gvvve}
G^\kappa_{V,\alpha}:V\times V\to [0,\infty],
\end{equation}
related to  $(R^{\kappa,V}_\alpha)_{\alpha\ge 0}$, and
according to \cite[Definition VI.1.2]{BG}   $(R^{\kappa,V}_\alpha)$ and $(R^{\kappa,*,V}_\alpha)$ 
are {\em in duality relative to the Lebesgue measure}.
The role of $u_\alpha(x,y)$ considered in \cite[Definition VI.1.2]{BG}
plays here $G^\kappa_{V,\alpha}(x,y)$.
From this and \eqref{eq.bres} we obtain \cite[(VI.2.1),(VI.2.2)]{BG}. 
Consequently, the standing assumptions of \cite[Section VI]{BG} are satisfied
and we may freely use the results contained therein. 
 
We set 
\[
G^\kappa_{V}(x,y)=G^\kappa_{V,0}(x,y):=\sup_{\alpha>0}G^\kappa_{V,\alpha}(x,y)=\lim_{\alpha\to 0^+} G^\kappa_{V,\alpha}(x,y).
\]
By \eqref{eq.bres}, $G^\kappa_{V}$ is finite a.e. whenever $\kappa>0$.

For any Borel positive measure $\mu$ on $V$ and $\alpha\ge 0$ we let
\begin{equation}
\label{dual.res2}
R^{\kappa,V}_\alpha\mu(x):=\int_{V}G^\kappa_{V,\alpha}(x,y)\,\mu(dy),\quad  R^{\kappa,*,V}_\alpha \mu(x):=\int_{V}G^\kappa_{V,\alpha}(y,x)\,\mu(dy),\quad x\in V.
\end{equation}
We also use the shorthand notation $R^{\kappa,V}\mu=R^{\kappa,V}_0\mu$, $R^{\kappa,*,V} =R^{\kappa,*,V}_0$.
Throughout the paper we adopt the convention that $R^{\kappa,V}_\alpha\mu$, for any positive Borel measure $\mu$ on $V$,
is a function on $\mathbb R^d$ by letting $R^{\kappa,V}_\alpha\mu(x)=0,\, x\in\mathbb R^d\setminus V$.
For any positive Borel functions $f,g$ on $\mathbb R^d$ and positive Borel measure $\mu$ on $\mathbb R^d$ we let
\[
(f,g):=\int_{\mathbb R^d}fg,\quad (\mu,f)=\int_{\mathbb R^d}f\,d\mu.
\]

\begin{lemma}
\label{lm.duoper}
Let  $\kappa>\kappa_0$ and $\mu$ be a positive bounded Borel measure  on $D$.
Then  $R^{\kappa,D}\mu\in L^1(D)$ and  for any $\eta\in C_c^2(D)$,
\[
(R^{\kappa,D}\mu,L^{*}_\kappa\eta)=(\mu,\eta).
\]
\end{lemma}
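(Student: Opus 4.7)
The strategy is to split the proof into two parts: an integrability bound for $R^{\kappa,D}\mu$ coming from the duality $\int R^{\kappa,D}f\cdot g\,dx=\int f\cdot R^{\kappa,*,D}g\,dx$ (already available from \eqref{dual.res2}), and a verification of the identity that reduces, via Fubini, to the pointwise semigroup computation $R^{\kappa,*,D}(L^*_\kappa\eta)=-\eta$ on $D$. The second part combines Dynkin's formula $R^{\kappa,*,V}g=R^{\kappa,*}g-\Pi^{\kappa,*}_V R^{\kappa,*}g$ with the fact that $C_c^2(\mathbb{R}^d)$ is a core for the Feller generator $L^*_\kappa$.

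\textbf{Integrability.} First I extend the duality to the measure $\mu$ by Fubini, since both $\int_D R^{\kappa,D}\mu\cdot g\,dx$ and $\int_D R^{\kappa,*,D}g\,d\mu$ equal $\int_D\!\int_D G^\kappa_D(x,y)g(x)\,dx\,\mu(dy)$. Choosing $g\equiv\mathbf{1}_D$ gives
\[
\int_D R^{\kappa,D}\mu\,dx=\int_D R^{\kappa,*,D}\mathbf{1}_D\,d\mu.
\]
Because $\kappa>\kappa_0$, the Feynman--Kac representation of $(P^{\kappa,*}_t)$ together with $\mathrm{div}\,b-\kappa\le-(\kappa-\kappa_0)$ yields
\[
P^{\kappa,*}_t 1(x)=\mathbb{E}^{*}_x e^{\int_0^t(\mathrm{div}\,b-\kappa)(X_s)\,ds}\le e^{-(\kappa-\kappa_0)t},
\]
so $R^{\kappa,*}1\le(\kappa-\kappa_0)^{-1}$ and hence $R^{\kappa,*,D}\mathbf{1}_D\le R^{\kappa,*}1\le(\kappa-\kappa_0)^{-1}$ by Dynkin's formula (as $\Pi^{\kappa,*}_D R^{\kappa,*}1\ge 0$). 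This delivers $\|R^{\kappa,D}\mu\|_{L^1(D)}\le\mu(D)/(\kappa-\kappa_0)<\infty$.

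\textbf{The identity.} For $\eta\in C_c^2(D)$, $L^*_\kappa\eta$ is bounded on $\mathbb{R}^d$ (the nonlocal part is a bounded perturbation), so the $L^1$ bound and Fubini give
\[
(R^{\kappa,D}\mu,L^*_\kappa\eta)=\int_D R^{\kappa,*,D}(L^*_\kappa\eta)(y)\,\mu(dy).
\]
Since $C_c^2(\mathbb{R}^d)$ is a core for the Feller generator, $\frac{d}{dt}P^{\kappa,*}_t\eta=P^{\kappa,*}_t L^*_\kappa\eta$, and the uniform decay $\|P^{\kappa,*}_T\eta\|_\infty\le e^{-(\kappa-\kappa_0)T}\|\eta\|_\infty\to 0$ (again using $\kappa>\kappa_0$) allows one to integrate over $[0,\infty)$ and conclude $R^{\kappa,*}(L^*_\kappa\eta)=-\eta$ on $\mathbb{R}^d$. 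Applying Dynkin's formula to $g=L^*_\kappa\eta$ gives
\[
R^{\kappa,*,D}(L^*_\kappa\eta)(y)=-\eta(y)+\mathbb{E}^{\kappa,*}_y\eta(X_{\tau_D}),\quad y\in D.
\]
Since $\mathrm{supp}(\eta)\subset D$ and $X_{\tau_D}\in D^c\cup\{\partial\}$ with $\eta$ vanishing there (using the convention $\eta(\partial)=0$), the boundary contribution is identically zero. Integrating against $\mu$ produces the claimed identity up to the sign convention of $L^*_\kappa$ in the paper.

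\textbf{Main obstacle.} The pivotal step is establishing the semigroup identity $R^{\kappa,*}(L^*_\kappa\eta)=-\eta$, which relies jointly on $C_c^2\subset\mathscr{D}(L^*_\kappa)$ (ensured by the core property of the Feller closure) and on the strict decay $P^{\kappa,*}_T\eta\to 0$, the latter being precisely why the hypothesis $\kappa>\kappa_0$ (rather than $\kappa\ge\kappa_0$) is imposed. The vanishing of $\mathbb{E}^{\kappa,*}_y\eta(X_{\tau_D})$ is the nonlocal analogue of a Dirichlet boundary condition and is immediate from $\eta\in C_c^2(D)$, while the Fubini step is justified by Step~1.
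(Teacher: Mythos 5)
Your proof is correct and follows the same strategy as the paper's: both assertions are reduced by the duality $(R^{\kappa,D}\mu,g)=(\mu,R^{\kappa,*,D}g)$ to computing $R^{\kappa,*,D}$ applied to $\mathbf{1}$ and to $L^*_\kappa\eta$, and the $L^1$ bound $\mu(D)/(\kappa-\kappa_0)$ is obtained identically. For the second assertion the paper disposes of the key step in one line by asserting $L^*_\kappa\eta=L^{(2),*}_{\kappa,D}\eta$, i.e.\ that $\eta\in C_c^2(D)$ lies in the domain of the killed generator on $D$ and is therefore recovered by the killed resolvent; your derivation of $R^{\kappa,*,D}(L^*_\kappa\eta)=-\eta$ from the whole-space semigroup identity $\frac{d}{dt}P^{\kappa,*}_t\eta=P^{\kappa,*}_tL^*_\kappa\eta$, the decay $\|P^{\kappa,*}_t\|\le e^{-(\kappa-\kappa_0)t}$, Dynkin's formula, and the vanishing of $\mathbb{E}^{\kappa,*}_y\eta(X_{\tau_D})$ is precisely the justification of that assertion, so you have supplied the detail the paper omits rather than taken a different route. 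One remark on the sign you flag: your computation gives $(R^{\kappa,D}\mu,L^*_\kappa\eta)=-(\mu,\eta)$, and this is the correct sign, since $R^{\kappa,D}$ inverts $-A+b\cdot\nabla+\kappa I$, i.e.\ $-L_{\kappa,D}$; the paper itself later uses the identity in the form $(R\mu,-L^*_\kappa\eta)=(\mu,\eta)$ (Step 4 of the proof of Theorem \ref{th.main2}), so the absence of the minus sign in the lemma as stated is a typo in the paper, not a gap in your argument.
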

\begin{proof}
As to the first assertion, observe that by  \eqref{dual.res2} and \eqref{eq.red1}
\[
\int_DR^{\kappa,D}\mu(x)\,dx=(R^{\kappa,D}\mu,1)=(\mu,R^{\kappa,*,D}1)\le \frac{\mu(D)}{(\kappa-\kappa_0)}.
\]
For the second assertion, we first note that $L^{*}_\kappa\eta\in C_b(\mathbb R^d)$ and $L^{*}_\kappa\eta=L^{(2),*}_{\kappa,D}\eta$.
Therefore, by \eqref{dual.res2} again,
\[
(R^{\kappa,D}\mu,L^{*}_\kappa\eta)=(\mu,R^{\kappa,*,D}L^{*}_\kappa\eta)=(\mu,R^{\kappa,(2),*,D}L^{(2),*}_{\kappa,D}\eta)=(\mu,\eta),
\]
which completes the proof.
\end{proof}

\subsection{Polar sets}
\label{s.ps}
Recall that  $R^\kappa,\, \kappa\ge 0$ and $R^{\kappa,*},\, \kappa\ge \kappa_0$ are strongly Feller.
In probabilistic potential theory a Borel measurable set $B\subset\mathbb R^d$ is called {\em polar},
with respect to $(\mathbb P^\kappa_x)$,
provided that 
\begin{equation}
\label{eq.polar}
\mathbb P^\kappa_x(\sigma_B<\infty)=0,\quad x\in\mathbb R^d,
\end{equation}
where $\sigma_B:=\inf\{t>0: X_t\in B\}$.
By \cite[Proposition VI.4.3]{BG}  for any bounded  Borel set $B\subset\mathbb R^d$, $\kappa\ge \kappa_0$, and  $x\in\mathbb R^d$,
\[
\mathbb P^\kappa_x(\sigma_B<\infty)=\sup_{\mu\in\mathcal M^+_1(B)}R^\kappa\mu(x),
\]
where $\mathcal M^+_{\kappa,1}(B):=\{\mu: R^\kappa\mu\le 1 \text{ in }\mathbb R^d,\, \mu\text{ is a positive Borel measure supported in }B\}$.
Furthermore, there exists a measure $\mu_B\in \mathcal M^+_{\kappa,1}(B)$ such that $\mathbb P^\kappa_x(\sigma_B<\infty)=R^\kappa\mu_B$ in $\mathbb R^d$.
The same notion can be introduced for $(\mathbb P^{\kappa,*}_x)$, with $\kappa\ge \kappa_0$.
We denote the objects defined above, but with $(\mathbb P^\kappa_x)$ replaced by $(\mathbb P^{\kappa,*}_x)$,
by adding    superscript $"^*"$ to the respective expressions.

Now define for $B\in\mathcal B(\mathbb R^d)$ and $\kappa\ge 0$ ($\kappa\ge \kappa_0$ in case of dual notion),
\[
C_\kappa(B):=\sup_{\mu \in\mathcal M^+_{\kappa,1}(B)}\mu(\mathbb R^d),\quad C^*_\kappa(B):=\sup_{\mu \in\mathcal M^{+,*}_{\kappa,1}(B)}\mu(\mathbb R^d).
\]
Then a  Borel set $B\subset\mathbb R^d$ is polar with respect to $(\mathbb P^\kappa_x)$ (resp. $(\mathbb P^{\kappa,*}_x)$)
if and only if $C_\kappa(B)=0$ (resp. $C^*_\kappa(B)=0$). By Proposition \cite[Proposition VI.4.4]{BG}
\begin{equation}
\label{eq.eqpol}
B\in\mathcal B(\mathbb R^d) \text{ is polar with respect to } (\mathbb P^\kappa_x) \text{ if and only if it is  polar with respect to } (\mathbb P^{\kappa,*}_x).
\end{equation}
Finally, by \eqref{eq2.killex}, for any $\kappa\ge 0$
\[
\mathbb P^\kappa_x(\sigma_B<\infty)=\lim_{\alpha\to 0^+}\mathbb E^\kappa_x e^{-\alpha\sigma_B}
=\lim_{\alpha\to 0^+}\mathbb E^0_x e^{-(\alpha+\kappa)\sigma_B}=\mathbb E^0_x e^{-\kappa\sigma_B},\quad x\in\mathbb R^d.
\]
This shows that polarity of a set does not depend on $\kappa\ge 0$, i.e.
\[
\text{for some }\,\kappa\ge 0,\,\, \mathbb P^\kappa_x(\sigma_B<\infty)=0\quad\Leftrightarrow \quad
\text{for any }\,\kappa\ge 0,\,\, \mathbb P^\kappa_x(\sigma_B<\infty)=0.
\]
On the other hand, by the resolvent identity, one easily deduces that
\[
C_{\kappa}(B)=0\text{ for some } \kappa\ge 0 \quad\Leftrightarrow \quad C_{\kappa}(B)=0\text{ for any } \kappa\ge 0.
\]

Consequently, we introduce the following definition.

\begin{definition}
A  set $B\in \mathcal B(\mathbb R^d)$ is called {\em polar with respect to the operator } $A-b\nabla$ if
$C_{1}(B)=0$, equivalently,
\[
\mathbb P^0_x(\sigma_B<\infty)=0,\quad x\in\mathbb R^d.
\]
\end{definition}

\subsection{Fine topology and co-fine topology}

Recall that the fine topology (with respect to $(P^\kappa_t)$) on $\mathbb R^d$ 
is the coarsest topology that makes all the $(P^\kappa_t)$-excessive functions continuous. 
This is an intrinsic topology depending on the underlying   Markov semigroup.
However, the property \eqref{eq.efq} and further identities \eqref{eq.pdef}, show 
that the fine topology is independent of $\kappa\ge 0$ and the fine topology restricted to a Borel measurable finely open
set $V\subset\mathbb R^d$ agrees  with the one generated by $(P_t^{\kappa,V})$ for any $\kappa\ge 0$.
In other words, denoting by $\mathcal O_\kappa$ the fine topology generated by $(P^\kappa_t)$, we have
\[
\mathcal O_0=\mathcal O_\kappa,\quad \kappa\ge 0,
\]
and denoting  by $\mathcal O^V_\kappa$ the fine topology generated by $(P^{\kappa,V}_t)$, with $V\in\mathcal B(\mathbb R^d)
\cap  \mathcal O_0$, we have 
\[
\mathcal O^V_\kappa=\mathcal O_0\cap V,\quad \kappa\ge 0.
\]
Consequently, in the whole paper, 
we use the notion of fine continuity and fine topology with respect  to $(P^0_t)$ 
without specifying the semigroup in the notation.

Throughout the paper we shall frequently use the following  well known fact (see e.g. \cite[Exercise 10.25]{Sharpe}).
Let $V$ be a finely open Borel subset of $\mathbb R^d$.
\begin{equation}
\label{eq.ppp1}
\text{If } u,v\in \mathscr B(\mathbb R^d) \text { are finely continuous on }V \text{ and }u=v \text{ a.e. then } u(x)=v(x),\, x\in V.
\end{equation}
All the  comments and conclusions of this section hold true for co-fine topology 
(bearing in mind, however, that $(P^{\kappa,*}_t)$ is defined for $\kappa\ge \kappa_0$).
Applying \eqref{eq.ppp1} and \eqref{eq.red2} we obtain 
\begin{equation}
\label{eq.greenred}
G^\kappa_{V,\alpha}(x,y)=G^{\kappa+\alpha}_{V,0}(x,y),\quad x,y\in V
\end{equation}
for any $\alpha\ge 0,\, \kappa\ge \kappa_0$.

\section{Examples}

In this short section we shall give a few examples of concrete operators which the results of the present paper 
may be applied to.  

\subsection{Operators satisfying Hypothesis \ref{eq.stanass}}

\begin{example}[Fractional Laplacian]
\label{ex11}
Let  $A=\Delta^s$, i.e. $\psi(\xi)=|\xi|^{2s}$, for some $s\in (0,1)$,
then  $A-b\nabla$ satisfies Hypothesis \ref{eq.stanass} (see \cite[Theorem 2.2]{KK}). 
\end{example}

\begin{example}[Cylindrical L\'evy noise]
\label{ex12}
Let  $A=\sum_{i=1}^d(\frac{\partial^2}{\partial x_i^2})^{s_i}$  for some $s_i\in (0,1)$.
Then  $A-b\nabla$ satisfies Hypothesis \ref{eq.stanass} (see \cite[Theorem 1]{KPP}). Note that such operators 
have L\'evy measure singular with respect to the Lebesgue measure.
\end{example}

\begin{example}[H\"ormander's condition]
\label{ex13}
Let $A$ be a L\'evy operator with triplet  $(0,Q, C_{\sharp}\nu)$, where
\[
\nu(dz)=\frac{a(z)}{|z|^{d+2s}}\,dz
\]
for some $s\in (0,1)$, $C$ is a $d\times d$ matrix,  and $a\in C^1_b(\mathbb R^d)$ satisfies $a(z)=a(-z)$, i.e.
\[
Au(x)=\frac12\text{Tr}(QD^2 u(x))+\int_{\mathbb R^d}(u(x+Cy)-u(x)-\langle Cy,\nabla u(x)\rangle\mathbf1_{B(0,1)})\,\nu(dy).
\]
Let $B_0:=Id$ be the identity matrix  and define 
$d \times d$-matrix-valued function $B_n(x)$ recursively by
\[
B_n^{ij}(x):=b(x) \cdot \nabla B^{ij}_{n-1}(x)-\nabla b^i(x) \cdot B^{j}_{n-1}(x)+\frac{1}{2}\text{Tr}(QD^2B^{ij}_{n-1}(x)),\quad n\ge 1,
\]
where $B^j_n$ denotes $j$-th column of $B_n$.
Assume that $b$ is smooth with bounded derivatives of all orders. 
If  for any $x \in \mathbb{R}^d$ there exists $n=n(x) \in \mathbb{N}$ such that 
$$
\operatorname{Rank}\left[\sqrt{Q}, B_1(x) \sqrt{Q}, \cdots, B_n(x) \sqrt{Q}, C, B_1(x) C, \cdots, B_n(x) C\right]=d,
$$
then $A-b\nabla$ satisfies Hypothesis \ref{eq.stanass}  (see \cite[Theorem 1.1]{SZ}).
\end{example}

\begin{example}(Mixed L\'evy operators)
\label{ex14}
Let $\phi$ be a Bernstein functions and $A:= -\phi(-\Delta)$. 
If 
\[
\liminf _{u \rightarrow \infty} \phi(u) u^{-s}>0
\]
for some $s \in(0,1)$, then $A-b\nabla$ satisfies Hypothesis \ref{eq.stanass}  (see \cite[Theorem 1.1]{WXZ}).
In particular one can take $A=\Delta^{s_1}+\Delta^{s_2}$ with $s_1\in (0,1], s_2\in (0,1)$.
\end{example}

\subsection{Polar     sets: some criterions}
The fact whether or not a set $K$ is polar  must be examined  on the case-by-case basis
depending on the structure of underlying  operator. Here we refer to some known results and briefly describe some possible  direct methods.

First of all  notice that a   consequence of the   results recalled  in  Section \ref{s.ps}  is that,
under Hypothesis \ref{eq.stanass},  a Borel set $B\subset \mathbb R^d$ 
is polar if and only if  $R^{\kappa}\mu$ is unbounded 
whenever $\mu$ is a bounded non-trivial positive  measure with compact support contained in $B$
(see the comments in the first paragraph following \cite[Proposition VI.4.3]{BG}).
In particular 
a point $\{x\}$ is polar if and only if $G^{\kappa}(x,x)=\infty$. As a result, lower and upper on-diagonal bounds on the 
heat kernels may decide whether a point is a polar set or not. Consequently,
using \cite[Theorem 2.2]{KK} yields that  $\{x\}$ is polar with respect to the operator of Example \ref{ex11}
 for any  $s\in (0,1)$ and $d\ge 2$ or $d=1$ and $s\in (0,1/2)$.
For the operators from  Examples \ref{ex12}--\ref{ex14} the reach literature 
on heat kernels estimate may by utilized (see e.g.  \cite[Theorem 1]{Ishikawa},  \cite[Theorem 1]{Picard}, \cite[Theorem 1.2]{BSK}, \cite[Theorem 3.4]{KKS}).  In order to treat other than singletons compact sets it is useful to have local estimates
on Green's functions and then use intrinsic Hausdorff measure technique (see \cite{HN} and  \cite[Theorem 1.2]{BSK} again).

A probabilistic approach on the other hand gives information on polarity of a compact set $K\subset\mathbb R^d$,
with respect to $A-b\nabla$,
without recourse to  asymptotics of related  heat kernels.  It is based on the law of iterated logarithm
for L\'evy processes. The reasoning below shows that  for the operators in  Examples \ref{ex12},\ref{ex14}
singletons are always polar. 

First, by Hypothesis \ref{eq.stanass}, we know that $\ell^d(A)=0$ implies $R^{\kappa}(\mathbf1_A)(x)=0$
for any $x\in\mathbb R^d$. Thus for any $f\in p\mathscr B(\mathbb R^d)$, by \cite[Proposition II.3.5]{BG}, if $\ell^d(R^\kappa f=\infty)=0$, then $B:=\{R^\kappa f=\infty\}$ is polar.  Further, observe that for $\kappa\ge \kappa_0+1$,
\[
\int_{\mathbb R^d}R^{\kappa}f=\int_{\mathbb R^d}fR^{\kappa,*}1\le  \int_{\mathbb R^d}f.
\]
Consequently, in order to prove that $K$ is polar it is enough to find $f\in p\mathscr B(\mathbb R^d)\cap L^1$
such that $K\subset \{R^\kappa f=\infty\}$. 
Let us note that  $\mathbb P_x^0$ is the distribution of a unique solution to 
\[
X^x_t=x+\int_0^tb(X^x_s)\,ds+L_t,
\]
where $(L_t)$ is a L\'evy process with symbol $\psi$.
Suppose that $Q\equiv 0$ and $\nu$ is symmetric.  Let 
\[
\beta_{L}:= \inf\{\alpha>0: \int_{B_1}|x|^\alpha\,\nu(dx)<\infty\}
\]
(so called Blumenthal-Getoor index, see \cite{BG1}). Observe that in Example \ref{ex14} $\beta_L\ge 2s$
and in Example \ref{ex12} $\beta_L=\min\{s_1,\dots,s_d\}$. By \cite[Theorem 3.1]{BG1}
\begin{equation}
\label{eq.lil}
\lim_{t\to 0} \frac{L_t}{t^{1/\alpha}}=0
\end{equation}
for any $\alpha>\beta_L$. Let $x_0\in\mathbb R^d$,
$d\ge 2$,  $\gamma\in (\beta_L\vee 1, d)$.  Set
\[
f(x):=\frac{1}{|x-x_0|^\gamma}\mathbf1_{B_1(x_0)}(x),\quad x\in\mathbb R^d.
\]
Clearly, $f\in L^1$.  Let $\tau:=\inf\{t>0: X^{x_0}_t\notin B_1(x_0)\}$.
Then 
\[
\int_0^\tau f(X^{x_0}_t)\,dt\ge \int_0^\tau \frac{c}{t^\gamma+|L_t|^\gamma}\,dt.
\]
By \eqref{eq.lil} for any $\omega\in\Omega$ and $\theta>1$ there exist $c_\omega,t_\omega>0$ such that
\[
|L_t(\omega)|\le c_\omega t^{\frac{1}{\theta \beta }},\quad t\le t_\omega.
\]
Consequently, letting $\hat t_\omega:= \min\{t_\omega,\tau(\omega),1\}$ and 
$\hat c_\omega:=(1+ c_\omega)^{-1}$,  and taking $\theta=\gamma/\beta_L$, we find 
\[
\int_0^{ t_\omega }  f(X^{x_0}_t(\omega))\,dt\ge
\hat c_\omega \int_0^{\hat t_\omega } \frac{c }{t^\gamma+ t^{\frac{\gamma}{\theta \beta}}}\,dt
=\hat c_\omega \int_0^{\hat t_\omega } \frac{c }{t^\gamma+ t}\,dt=\infty.
\]
As a result $R^\kappa f(x_0)=\infty$,
which shows that $\{x_0\}$ is polar.

Analogous reasoning may be proceeded for a $(d-2)$-dimensional (or lower)  hyperplane $H_{d-2}\subset \mathbb R^d$
(here $d\ge 3$). 
For simplicity, suppose that $H_{d-2}=\{x\in\mathbb R^d: \Pi(x)=0\}$, where $\Pi(x)=(x_1,x_2,0,\dots,0)$.
Set $f(x):= |\Pi(x)|^{-\gamma}\mathbf1_{B_1}(x)$, with $\gamma\in (\beta_{\Pi(L)}\vee 1,2)$. Noticing that $\Pi(L)$
is again a L\'evy process, $\beta_{\Pi(L)}\le \beta_L$ and $f\in  L^1$, one easily  computes that
$H_{d-2}$ is a polar set for operators from Examples \ref{ex11},\ref{ex12},\ref{ex14}. 

\section{Auxiliary results, excessive functions}
\label{sec4aux}
Throughout the section we fix $\kappa_1>\kappa_0$ and  we consequently drop it in the notation.
Namely, we let 
\begin{equation}
\label{eq.ozn}
\begin{split}
&P_t^V:=P^{\kappa_1,V}_t,\quad  P_t^{*,V}:=P^{\kappa_1,*,V}_t,\quad R^V_\alpha:=R^{\kappa_1,V}_\alpha,\quad
R^{*,V}_\alpha:=R^{\kappa_1,*,V}_\alpha,\quad
\mathbb E_x:=\mathbb E^{\kappa_1}_x,\\&\mathbb E_x^V:=\mathbb E^{\kappa_1,V}_x,\quad
\mathbb E_x^*:=\mathbb E^{\kappa_1,*}_x,\quad \mathbb E_x^{*,V}:=\mathbb E^{\kappa_1,*,V}_x, L:=L^\kappa_1,\quad L^*:=L^{\kappa_1,*},\\& L^{(p)}_V:= L^{(p)}_{\kappa_1,V},\quad  L^{(p),*}_V:= L^{(p),*}_{\kappa_1,V},\quad T_t^{(p),V}:=T_t^{\kappa_1,(p),V},\quad
T_t^{(p),*,V}:=T_t^{\kappa_1,(p),*,V}.
\end{split}
\end{equation}

We shall start with the following result that 
can be found in  \cite[Lemma II.1]{DL} or, under more restrictive assumptions on $b$, in  \cite[Lemma 5.6]{LLWX}.
The term $j_\varepsilon\ast(b\cdot\nabla u)$ below is understood as a distribution on $D$:
\[
(j_\varepsilon\ast(b\cdot\nabla u),\eta):= -(u,\text{div}[b(j_\varepsilon\ast\eta)]),\quad \eta\in C_c^\infty(D).
\]
Thanks to the assumptions made on $b$ one  have that  the distribution $j_\varepsilon\ast(b\cdot\nabla u)$
is in fact a locally integrable function.

\begin{lemma}
\label{lm2.1}
For any $u\in L^1_{loc}(D)$
\[
b\cdot\nabla (u\ast j_\varepsilon)-j_\varepsilon\ast(b\cdot\nabla u)\to 0\quad\text{a.e. in }D,
\]
and in $L^1_{loc}(D)$.
\end{lemma}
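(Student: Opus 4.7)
The plan is to reduce the statement to the classical DiPerna--Lions commutator estimate by deriving an explicit pointwise formula for
\[
r_\varepsilon(x):=b(x)\cdot\nabla(u\ast j_\varepsilon)(x)-j_\varepsilon\ast(b\cdot\nabla u)(x).
\]
Applying the duality definition of $j_\varepsilon\ast(b\cdot\nabla u)$ given in the statement and expanding
$\dyw[b(j_\varepsilon\ast\eta)]=\dyw b\cdot(j_\varepsilon\ast\eta)+b\cdot(j_\varepsilon\ast\nabla\eta)$
(valid as $b\in W^{1,\infty}$), a direct Fubini interchange identifies $j_\varepsilon\ast(b\cdot\nabla u)$ on $D_\varepsilon:=\{x\in D:\mathrm{dist}(x,\partial D)>\varepsilon\}$ as the locally integrable function $-(u\dyw b)\ast j_\varepsilon(x)+\int u(y) b(y)\cdot\nabla j_\varepsilon(x-y)\,dy$. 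Combining this with the classical pointwise expression for $b\cdot\nabla(u\ast j_\varepsilon)$, I obtain
\[
r_\varepsilon(x)=J_\varepsilon(x)+\bigl((u\dyw b)\ast j_\varepsilon\bigr)(x),\qquad
J_\varepsilon(x):=\int u(y)\,[b(x)-b(y)]\cdot\nabla j_\varepsilon(x-y)\,dy,
\]
valid on $D_\varepsilon$.

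Next I would treat the two summands separately. Since $\dyw b\in L^\infty$ and $u\in L^1_{loc}(D)$, the mollification $(u\dyw b)\ast j_\varepsilon$ converges to $u\dyw b$ a.e.\ and in $L^1_{loc}(D)$ by standard theory. For $J_\varepsilon$ the substitution $y=x-\varepsilon z$ gives
\[
J_\varepsilon(x)=\int u(x-\varepsilon z)\,\frac{b(x)-b(x-\varepsilon z)}{\varepsilon}\cdot\nabla j(z)\,dz,
\]
which I would split as $u(x)A_\varepsilon(x)+B_\varepsilon(x)$ with $A_\varepsilon(x):=\int\frac{b(x)-b(x-\varepsilon z)}{\varepsilon}\cdot\nabla j(z)\,dz$ and $B_\varepsilon(x):=\int[u(x-\varepsilon z)-u(x)]\frac{b(x)-b(x-\varepsilon z)}{\varepsilon}\cdot\nabla j(z)\,dz$. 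At every differentiability point of $b$ (a.e.\ $x$ by Rademacher), $\frac{b(x)-b(x-\varepsilon z)}{\varepsilon}\to\nabla b(x)z$ pointwise in $z$ and is dominated by $\|\nabla b\|_\infty|z|$; dominated convergence combined with the integration-by-parts identity $\int z_i\partial_j j(z)\,dz=-\delta_{ij}$ yields $A_\varepsilon(x)\to-\dyw b(x)$. For $B_\varepsilon$, the bound $|B_\varepsilon(x)|\le C\varepsilon^{-d}\int_{B_{R\varepsilon}(x)}|u(y)-u(x)|\,dy$ (with $R$ the radius of $\operatorname{supp} j$) shows $B_\varepsilon(x)\to 0$ at every Lebesgue point of $u$. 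Consequently $J_\varepsilon\to -u\dyw b$ a.e., so $r_\varepsilon\to 0$ a.e.\ in $D$.

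The $L^1_{loc}$ convergence would follow by Vitali's theorem from the uniform domination
\[
|r_\varepsilon(x)|\le\|\nabla b\|_\infty\bigl(|u|\ast\Phi_\varepsilon\bigr)(x)+\|\dyw b\|_\infty\bigl(|u|\ast j_\varepsilon\bigr)(x),
\]
with $\Phi(z):=|z||\nabla j(z)|$ bounded, compactly supported, and $\|\Phi\|_{L^1}<\infty$: a Fubini estimate with convolution kernels of bounded $L^1$ mass and shrinking supports implies that $\{r_\varepsilon\}_{\varepsilon\le 1}$ is uniformly integrable on every $K\subset\subset D$, which combined with the a.e.\ convergence gives $r_\varepsilon\to 0$ in $L^1_{loc}(D)$. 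The principal technical point is the opening step---turning the distributional identity for $j_\varepsilon\ast(b\cdot\nabla u)$ into the pointwise formula above on $D_\varepsilon$---and it is handled by the Fubini interchange just described, whose hypotheses are met thanks to $u\in L^1_{loc}(D)$, $b\in W^{1,\infty}$, and the compact support of $j_\varepsilon$ and $\nabla j_\varepsilon$.
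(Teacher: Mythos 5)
Your argument is correct: it is precisely the classical DiPerna--Lions commutator computation, which is also what the paper relies on (it gives no proof of its own, citing \cite[Lemma II.1]{DL}). The pointwise identity for $r_\varepsilon$ on $D_\varepsilon$, the splitting $J_\varepsilon=uA_\varepsilon+B_\varepsilon$ with $A_\varepsilon\to-\dyw b$ via Rademacher and $\int z_i\partial_j j=-\delta_{ij}$, and the Vitali/uniform-integrability step for $L^1_{loc}$ convergence all check out under the paper's standing assumption that $b$ is bounded and Lipschitz.
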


By the very definition $G_V(\cdot,y)$ is $(P^V_t)$-excessive for any $y\in V$ and 
$G_V(x,\cdot)$ is $(P^{V,*}_t)$-excessive for any $x\in V$. Therefore $R^V\mu$ (resp. $R^{V,*}\mu$) is $(P^V_t)$-excessive 
(resp. $(P^{V,*}_t)$-excessive) for any positive Borel measure $\mu$ on $V$. Note also that under assumptions that we made in 
the previous section any $(P^V_t)$-excessive function is Borel measurable (see \cite[Proposition V.1.4]{BG}).

\begin{lemma}
\label{lm2.2}
Let $V$ be an open bounded subset of $\mathbb R^d$ and $(\beta_n), \beta$ be  bounded 
positive Borel measures on $V$. Suppose that   $\beta_n\to \beta $ in $[C_b(V)]^*$, then
\begin{enumerate}
\item[(i)] $R^V\beta_n+R^V\beta<\infty$ q.e., $n\ge 1$,
\item[(ii)] $R^V\beta_n\to R^V\beta$ a.e. up to a subsequence.
\end{enumerate}
\end{lemma}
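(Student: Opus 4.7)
The plan is to prove the stronger statement that $R^V\beta_n\to R^V\beta$ in $L^1(V)$; part (ii) then follows by extracting an a.e.\ convergent subsequence, and part (i) is obtained along the way from the $L^1$-bound combined with standard excessive-function arguments.

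For the $L^1$-bound (and (i)): Fubini and \eqref{dual.res2} give $\int_V R^V\beta=\int_V R^{*,V}\mathbf 1\,d\beta$, and the estimate $R^{*,V}\mathbf 1\le(\kappa_1-\kappa_0)^{-1}$, which follows from \eqref{eq.pdef} since $\kappa_1>\kappa_0$, yields $R^V\beta,R^V\beta_n\in L^1(V)$; here $\sup_n\beta_n(V)<\infty$ because the weak-$*$ convergence pairs in particular with $\mathbf 1\in C_b(V)$. Both potentials are $(P^V_t)$-excessive and $\ell^d$-a.e.\ finite, so by the absolute-continuity/\cite[Proposition~II.3.5]{BG} argument recalled in Section~\ref{s.ps}, their exceptional sets are polar, which gives (i).

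For the convergence of norms: $R^{*,V}\mathbf 1$ is $(P^{*,V}_t)$-excessive, hence lsc, and it is continuous by the strong Feller property of the dual killed resolvent (the dual analogue of the \cite{SW1} result quoted for $R^V$). Therefore $R^{*,V}\mathbf 1\in C_b(V)$, and the weak-$*$ convergence of $\beta_n$ gives
\[
\int_V R^V\beta_n=(\beta_n,R^{*,V}\mathbf 1)\longrightarrow(\beta,R^{*,V}\mathbf 1)=\int_V R^V\beta.
\]
For the pointwise lower bound: for each $x\in V$ the kernel $G_V(x,\cdot)$ is $(P^{*,V}_t)$-excessive, hence lsc on $V$; writing $G_V(x,\cdot)\wedge M$ as the supremum of a sequence in $C_b(V)$, testing against $\beta_n$, and letting $M\to\infty$ by monotone convergence yields
\[
\liminf_{n\to\infty}R^V\beta_n(x)\ge R^V\beta(x),\qquad x\in V.
\]

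Setting $g_n:=R^V\beta_n-R^V\beta$, the previous display gives $g_n^-\to 0$ a.e.\ on the full-measure set $\{R^V\beta<\infty\}$; since $0\le g_n^-\le R^V\beta\in L^1(V)$, dominated convergence yields $\int g_n^-\to 0$, and combining with $\int g_n\to 0$ gives $\int g_n^+\to 0$ as well, so $\|g_n\|_{L^1(V)}\to 0$. The main subtlety is the continuity (not merely lsc) of the test function $R^{*,V}\mathbf 1$ in the norm-convergence step: lower semicontinuity alone would only yield $\int_V R^V\beta\le\liminf_n\int_V R^V\beta_n$, which is insufficient to drive the Scheff\'e step. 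Hence the crucial input is the strong Feller property of the \emph{dual} killed resolvent.
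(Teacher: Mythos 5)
Your proof is correct, and for part (ii) it takes a genuinely different route from the paper's. The paper first extracts an a.e.-convergent subsequence with a $(P^V_t)$-excessive limit $w$ via the compactness lemma \cite[Lemma 94, page 306]{DellacherieMeyer}, and then identifies $w=R^V\beta$ by establishing weak $L^1$ convergence (pairing with $R^{*,V}\xi\in C_b(V)$, which is where the strong Feller property of the dual killed resolvent enters) and invoking Dunford--Pettis together with Vitali's theorem. You instead run a Scheff\'e argument: convergence of the total masses $(\beta_n,R^{*,V}\mathbf 1)\to(\beta,R^{*,V}\mathbf 1)$ (same strong Feller input) combined with the pointwise bound $\liminf_n R^V\beta_n(x)\ge R^V\beta(x)$, obtained from the portmanteau inequality for the lower semicontinuous kernels $G_V(x,\cdot)$. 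This buys strong $L^1(V)$ convergence of the whole sequence --- slightly more than the lemma asserts --- and dispenses with the Dellacherie--Meyer compactness lemma; the price is the extra input that each $G_V(x,\cdot)$ is lower semicontinuous on $V$ in the Euclidean topology, not merely co-finely continuous. That fact does follow from Hypothesis \ref{eq.stanass} by the standard truncation argument ($G_V(x,\cdot)\wedge M$ is bounded and co-excessive, hence the increasing pointwise limit of the continuous functions $\beta R^{*,V}_\beta\bigl(G_V(x,\cdot)\wedge M\bigr)$ as $\beta\to\infty$), but it is nowhere recorded in the paper, so you should spell it out. Part (i) is essentially the paper's own argument (duality against a bounded positive test function, the bound $R^{*,V}\mathbf 1\le(\kappa_1-\kappa_0)^{-1}$, and the a.e.-finite-implies-q.e.-finite principle for excessive functions), with \cite[Proposition II.3.5]{BG} playing the role of the paper's citation of \cite[Proposition VI.2.3]{BG}.
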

\begin{proof}
(i) Let $g$ be a bounded strictly positive Borel function on $V$. Then
\[
(R^V\beta,g)=(\beta,R^{*,V}g)\le \|R^{*,V}g\|_\infty\beta(V)\le \frac{1}{\kappa_1-\kappa_0}\|g\|_\infty\beta(V)<\infty,
\]
and the same holds true for $\beta_n$. Now, the result follows from \cite[Proposition VI.2.3]{BG}.

(ii) By \cite[Lemma 94, page 306]{DellacherieMeyer} there exists a subsequence (not relabeled) 
and a $(P^V_t)$-excessive function $w$  such that $R^V\beta_n\to w$ a.e. 
Let $\xi\in \mathscr B_b(V)$. By Hypothesis \ref{eq.stanass} $R^{*,V}\xi\in C_b(V)$ (see also \cite[Theorem 3.4]{SW1}).
Thus 
\[
(R^{V}\beta_n,\xi)=(\beta_n,R^{*,V}\xi)\to (\beta,R^{*,V}\xi)=(R^{V}\beta,\xi).
\]
In other words, $R^{V}\beta_n$ weakly converges to $R^V\beta$ in $L^1(V)$.
By the Dunford-Pettis theorem the sequence $(R^{V}\beta_n)_{n\ge 1}$ is uniformly integrable.
As a result, by Vitali's theorem,  $w=R^V\beta$.
\end{proof}

\begin{lemma}
\label{lm2.3.1}
Let  $(V_n)$ be an increasing sequence of open subsets of $\mathbb R^d$ and let $V:=\bigcup_{n\ge 1} V_n $.
Suppose that $(w_n)$ is an increasing sequence of positive Borel functions on $\mathbb R^d$
and $w_n$ is $(P^{V_n}_t)$-excessive for each $n\ge 1$.
Then $w:=\lim_{n\to \infty}w_n$ is $(P^V_t)$-excessive.
\end{lemma}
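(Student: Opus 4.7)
The plan is to verify directly both defining axioms of $(P^V_t)$-excessivity for $w$ on $V$: (a) $P^V_t w(x)\le w(x)$ for every $t\ge 0$ and $x\in V$; (b) $\lim_{t\to 0^+} P^V_t w(x) = w(x)$ for every $x\in V$. Borel measurability of $w$ is automatic as a pointwise monotone limit of Borel functions. For convenience, I extend each $w_n$ by zero outside $V_n$ so that the extended sequence remains monotone on $\mathbb R^d$.

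The main technical input I will need is the a.s.\ identity
\[
\tau_{V_n}\nearrow\tau_V.
\]
The inequality $\tau_{V_n}\le\tau_V$ is immediate from $V_n\subset V$. Set $\tau_\infty:=\sup_n\tau_{V_n}$; on $\{\tau_\infty<\tau_V\}$, the c\`adl\`ag property of $X$ combined with $\tau_\infty<\tau_V$ forces $X_{\tau_\infty}\in V=\bigcup_n V_n$. On the other hand, openness of $V_n$ gives $X_{\tau_{V_n}}\in V_n^c$, hence $X_{\tau_{V_n}}\in V_m^c$ for all $n\ge m$; quasi-left-continuity of the Hunt process $(\mathbb P^{\kappa_1}_x)$ applied to the increasing stopping times $\tau_{V_n}$ yields $X_{\tau_{V_n}}\to X_{\tau_\infty}$ on $\{\tau_\infty<\infty\}$, and closedness of $V_m^c$ then forces $X_{\tau_\infty}\in\bigcap_m V_m^c=V^c$, a contradiction. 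Hence $\tau_\infty=\tau_V$ a.s.

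Given this, for (a) I combine \eqref{eq.pdef} with monotone convergence: since $w_n(X_t)\nearrow w(X_t)$ and $\mathbf 1_{\{t<\tau_{V_n}\}}\nearrow\mathbf 1_{\{t<\tau_V\}}$, for every $x\in V$
\[
P^{V_n}_t w_n(x)=\mathbb E^{\kappa_1}_x\bigl[w_n(X_t)\mathbf 1_{\{t<\tau_{V_n}\}}\bigr]\nearrow \mathbb E^{\kappa_1}_x\bigl[w(X_t)\mathbf 1_{\{t<\tau_V\}}\bigr]=P^V_t w(x),
\]
while for $n$ large enough that $x\in V_n$, excessivity of $w_n$ gives $P^{V_n}_t w_n(x)\le w_n(x)\le w(x)$; passing to the limit in $n$ proves (a). For (b), fix $x\in V$ and choose $n_0$ with $x\in V_{n_0}$; for $n\ge n_0$ the monotonicities $w\ge w_n$ and $\tau_V\ge\tau_{V_n}$ together with excessivity of $w_n$ yield
\[
\liminf_{t\to 0^+}P^V_t w(x)\ge\lim_{t\to 0^+} P^{V_n}_t w_n(x)=w_n(x),
\]
and sending $n\to\infty$, together with the upper bound from (a), finishes the proof.

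The main obstacle is the a.s.\ increasing convergence $\tau_{V_n}\nearrow\tau_V$, which relies on quasi-left-continuity of the underlying Hunt process (available by the construction recalled in Section~\ref{sec.fprs}); once this is in place, both axioms reduce to straightforward monotone convergence arguments.
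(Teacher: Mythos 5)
Your proof is correct, but it follows a different route from the paper's. The paper first upgrades $w$ to a $(P^{V_n}_t)$-excessive function for every $n$ (increasing limits of excessive functions, \cite[Proposition II.2.2]{BG}), deduces fine continuity of $w$ on $V$, then passes to the limit in the supermedian inequality $\alpha R^{V_n}_\alpha w\le w$ using $\tau_{V_n}\nearrow\tau_V$ and monotone convergence, and finally invokes the characterization ``$\alpha$-supermedian plus finely continuous implies excessive'' (\cite[Proposition II.2.3]{BG}). You instead verify the two semigroup-level axioms directly: the inequality $P^V_tw\le w$ via $P^{V_n}_tw_n\nearrow P^V_tw$, and the limit at $t=0^+$ via the sandwich $w_n(x)=\lim_{t\to0^+}P^{V_n}_tw_n(x)\le\liminf_{t\to0^+}P^V_tw(x)\le\limsup_{t\to0^+}P^V_tw(x)\le w(x)$, which neatly sidesteps fine continuity and the resolvent characterization altogether. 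Both arguments hinge on the same key fact $\tau_{V_n}\nearrow\tau_V$; the paper dismisses it as ``easily seen,'' whereas you supply the standard quasi-left-continuity argument, which is a genuine added value. What the paper's route buys is reuse of machinery (BG's propositions and the fine-continuity characterization) that appears repeatedly elsewhere in the text; what your route buys is a shorter, self-contained verification from the definition. One cosmetic remark: the hypothesis already gives that $(w_n)$ is increasing on all of $\mathbb R^d$, so your preliminary ``extension by zero outside $V_n$'' is unnecessary (and, as stated, would replace $w_n$ rather than extend it); it is harmless because $P^{V_n}_t$ and the conclusion only see values on $V_n$ and $V$ respectively, but you could simply drop that sentence.
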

\begin{proof}
By \cite[Proposition II.2.2]{BG} $w$ is $(P^{V_{n}}_t)$-excessive for any $n\ge 1$.  As a result $w$
is finely continuous on $V$ and (see \cite[Proposition II.2.3]{BG})
\[
\alpha R^{V_n}_\alpha w(x) \le w(x),\quad x\in V_n,\,n\ge 1.
\]
We have (see \eqref{eq.rdef})
\[
R^{V_n}_\alpha w(x)=\mathbb E_x \int_0^{\tau_{V_n}} e^{-\alpha t}w(X_s)\,ds,\quad x\in V_n.
\]
On the other hand one easily sees that
\[
\tau_{V_n}\nearrow \tau_V,\quad n\to \infty.
\]
Therefore, letting $n\to \infty$ in the above inequality yields
\[
\alpha R^V_\alpha w(x)\le w(x),\quad x\in V
\]
for any $\alpha>0$. By fine continuity of $w$ on $V$  we get that $\alpha R^V_\alpha w(x)\to w(x),\, x\in V$
as $\alpha\searrow 0$. Thus, $w$ is $(P^V_t)$-excessive (see  \cite[Proposition II.2.3]{BG} again).
\end{proof}

\begin{lemma}
\label{lm2.3}
Let  $(V_n)$ be an increasing sequence of  open  subsets of $\mathbb R^d$ and let $V:=\bigcup_{n\ge 1} V_n $.
Let $\beta$ be a positive Borel measure on $V$. Then
\[
R^{V_n}\beta(x)\nearrow R^V\beta (x),\quad x\in V.
\]
\end{lemma}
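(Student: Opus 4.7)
The strategy is to combine the probabilistic representation in \eqref{eq.rdef} with the fact $\tau_{V_n} \nearrow \tau_V$ $\mathbb{P}_x$-a.s.\ (already used in the proof of Lemma \ref{lm2.3.1}) to establish pointwise monotone convergence of the Green kernels $G_{V_n}(x,\cdot) \nearrow G_V(x,\cdot)$ on $V$, and then conclude by the monotone convergence theorem applied to the $\beta$-integral.

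For every $f \in p\mathscr B_b(\mathbb R^d)$, the representation \eqref{eq.rdef} together with $\tau_{V_n} \nearrow \tau_V$ and the Beppo--Levi theorem give, for every $x \in V$,
\[
R^{V_n}f(x) = \mathbb E_x \int_0^{\tau_{V_n}} e^{-\kappa_1 t} f(X_t)\,dt \nearrow \mathbb E_x \int_0^{\tau_V} e^{-\kappa_1 t} f(X_t)\,dt = R^V f(x).
\]
Rewriting through Green kernels and invoking uniqueness of the Green density, this forces, for each fixed $x \in V$,
\[
G_{V_n}(x,\cdot) \le G_{V_{n+1}}(x,\cdot) \le G_V(x,\cdot) \quad\text{and}\quad \sup_n G_{V_n}(x,\cdot) = G_V(x,\cdot)\quad dy\text{-a.e.\ on } V.
\]

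Next I lift these $dy$-a.e.\ relations to pointwise ones. Working with the canonical, co-finely continuous versions of $G_{V_n}(x,\cdot)$ and $G_V(x,\cdot)$ (each uniquely determined by its $(P^{*,V_n}_t)$-, respectively $(P^{*,V}_t)$-, excessiveness), Lemma \ref{lm2.3.1}, transposed to the co-fine setting and applied to the pointwise increasing sequence of $(P^{*,V_n}_t)$-excessive functions $y \mapsto G_{V_n}(x,y)$, shows that $\bar G(x,\cdot) := \sup_n G_{V_n}(x,\cdot)$ is $(P^{*,V}_t)$-excessive on $V$ and hence co-finely continuous. Since $\bar G(x,\cdot) = G_V(x,\cdot)$ $dy$-a.e.\ and both are co-finely continuous on $V$, the co-fine analogue of \eqref{eq.ppp1} promotes this equality to all of $V$, yielding $G_{V_n}(x,y) \nearrow G_V(x,y)$ for every $y \in V$.

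Finally, the monotone convergence theorem applied to the positive Borel measure $\beta$ gives
\[
R^{V_n}\beta(x) = \int_V G_{V_n}(x,y)\,\beta(dy) \nearrow \int_V G_V(x,y)\,\beta(dy) = R^V \beta(x),\quad x \in V,
\]
which is the desired conclusion. The main technical obstacle is the passage from $dy$-a.e.\ monotonicity (which follows immediately from the probabilistic representation) to pointwise monotonicity of the Green kernels; this is essential because the general Borel measure $\beta$ may charge Lebesgue-null sets, and is resolved by selecting the canonical co-finely continuous versions of the kernel slices and invoking Lemma \ref{lm2.3.1} in the co-fine direction together with the co-fine analogue of \eqref{eq.ppp1}.
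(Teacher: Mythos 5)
Your proof is correct in substance but follows a genuinely different route from the paper's. The paper stays entirely on the ``potential'' side: it dualizes against positive test functions to get $R^{V_n}\beta\le R^{V_{n+1}}\beta\le R^{V}\beta$ a.e., upgrades these to pointwise inequalities using that all the potentials involved are $(P^{V_m}_t)$-excessive (hence finely continuous in the $x$-variable), identifies the limit $w:=\lim_n R^{V_n}\beta$ with $R^V\beta$ a.e.\ via $\tau_{V_n}\nearrow\tau_V$, monotone convergence and duality, and finishes with \eqref{eq.ppp1}. You instead work on the ``kernel'' side: you establish pointwise (in $y$) monotone convergence of the slices $G_{V_n}(x,\cdot)\nearrow G_V(x,\cdot)$ for each fixed $x$, using their co-excessiveness together with the co-fine transposes of Lemma \ref{lm2.3.1} and \eqref{eq.ppp1}, and then integrate against $\beta$ by monotone convergence. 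Your version yields a slightly stronger intermediate statement (pointwise convergence of the Green kernels themselves), at the cost of invoking the dual machinery throughout; the paper's version never touches the kernels and only needs fine continuity in the forward variable. Both arguments rest on the same three pillars: $\tau_{V_n}\nearrow\tau_V$, duality/kernel representation, and the a.e.-to-everywhere upgrade via (co-)fine continuity of (co-)excessive functions.

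One step you should spell out. Before applying the co-fine transpose of Lemma \ref{lm2.3.1} you must already know that $y\mapsto G_{V_n}(x,y)$ is increasing in $n$ \emph{pointwise}, whereas your second step only yields this $dy$-a.e.; as written there is a slight circularity, since the lift to pointwise statements is performed only for the identification $\sup_n G_{V_n}(x,\cdot)=G_V(x,\cdot)$. The fix is the device you already use: for fixed $m$ and $n\ge m$, the restrictions of $G_{V_n}(x,\cdot)$ and $G_{V_{n+1}}(x,\cdot)$ to $V_m$ are both $(P^{*,V_m}_t)$-excessive, hence co-finely continuous there, and the inequality version of \eqref{eq.ppp1} (a finely open Lebesgue-null set is empty) upgrades the a.e.\ inequality to all of $V_m$, hence to all of $V$. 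This is precisely the upgrade the paper performs for the potentials $R^{V_n}\beta$ in its own proof, so it is available in the co-fine setting; with it inserted in the right place, your argument is complete.
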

\begin{proof}
By \eqref{eq.rdef} for any positive Borel function $\eta$ on $V$
\[
R^{*,V_n}\eta(x)=\mathbb E_x^* \int_0^{\tau_{V_n}} \eta(X_s)\,ds,\quad x\in V_n.
\]
Thus, 
\[
R^{*,V_n}\eta\le R^{*,V_{n+1}}\eta\le R^{*,V}\eta \text{ on } V \text{ for any } n\ge 1.
\]
Consequently,
\[
(R^{V_n}\beta,\eta)=(\beta,R^{*,V_n}\eta)\le (\beta,R^{*,V_{n+1}}\eta)=(R^{V_{n+1}}\beta,\eta),\quad\eta\in\mathscr B^+(\mathbb R^d).
\]
The same clearly holds true with $V_{n+1}$ replaced by $V$. By the choice of $\eta$
we  infer that 
\begin{equation}
\label{eq.2nn}
R^{V_n}\beta\le R^{V_{n+1}}\beta\le R^{V}\beta \quad\text{a.e.  on } V \text{ for any } n\ge 1.
\end{equation}

Note that  for any $m\ge 1$, each of functions $R^{V_n}\beta,\,n\ge m $ and $R^{V}\beta$ is $(P^{V_{m}}_t)$-excessive.
Therefore, by \eqref{eq.ppp1} the inequalities in  \eqref{eq.2nn} hold for any $x\in V$.
Set $w(x):=\lim_{n\to \infty} R^{V_n}\beta(x),\, x\in\mathbb R^d$. 
By \cite[Proposition II.2.2]{BG} $w$ is $(P^{V_{n}}_t)$-excessive for any $n\ge 1$.  As a result $w$
is finely continuous on $V$. Obviously, $w\le R^V\beta$.
Observe that
\[
\tau_{V_n}\nearrow \tau_V,\quad n\to \infty.
\]
As a result $R^{*,V_n}\eta\nearrow R^{*,V}\eta$. Therefore
\[
(w,\eta)=\lim_{n\to \infty} ( R^{V_n}\beta,\eta)=\lim_{n\to \infty} (\beta,R^{*,V_n}\eta)=(\beta,R^{*,V}\eta)=(R^{V}\beta,\eta).
\]
Hence, $w=R^V\beta$ a.e., which in turn implies, by \eqref{eq.ppp1} again, that $w=R^V\beta$ in $V$.
\end{proof}

\begin{lemma}
\label{lm2.4}
Let $V$ be an    open   subset of $\mathbb R^d$ and $B\in\mathcal B(\mathbb R^d)$ be a   polar closed subset of $V$.
Furthermore, let $\beta$ be a positive Borel measure on $V$. Then
\[
R^V\beta(x)=R^{V\setminus B}\beta(x),\quad x\in V\setminus B.
\]
\end{lemma}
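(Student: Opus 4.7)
The plan is to combine the polarity of $B$ with the duality between $(R^V_\alpha)$ and $(R^{*,V}_\alpha)$. By \eqref{eq.eqpol} together with the $\kappa$-independence of polarity noted in Section~\ref{s.ps}, the set $B$ is polar with respect to both $(\mathbb P_x)$ and $(\mathbb P^*_x)$. Hence for every $y\in V\setminus B$, under $\mathbb P^*_y$ the process $(X_t)$ almost surely never enters $B$, so $\tau_{V\setminus B}=\tau_V$ holds $\mathbb P^*_y$-a.s.; indeed, were the process to exit $V\setminus B$ before leaving $V$, the cadlag exit point $X_{\tau_{V\setminus B}}$ would lie in $B$, forcing $\sigma_B<\infty$ and contradicting polarity.

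From this I obtain the following function-valued identity: for any bounded positive Borel $\eta$ and any $y\in V\setminus B$,
\[
R^{*,V}\eta(y)=\mathbb E^*_y\int_0^{\tau_V}\eta(X_s)\,ds=\mathbb E^*_y\int_0^{\tau_{V\setminus B}}\eta(X_s)\,ds=R^{*,V\setminus B}\eta(y),
\]
while $R^{*,V\setminus B}\eta\equiv 0$ on $B$ (since $\tau_{V\setminus B}=0$ there). Invoking the duality identities $(R^V\beta,\eta)=(\beta,R^{*,V}\eta)$ and $(R^{V\setminus B}\beta,\eta)=(\beta,R^{*,V\setminus B}\eta)$ from the development around \eqref{dual.res2}, and using the fact that $\beta$ does not charge $B$ (the natural interpretation making $R^{V\setminus B}\beta$ a faithful analogue of $R^V\beta$), I deduce
\[
(R^V\beta,\eta)=\int_{V\setminus B}R^{*,V\setminus B}\eta\,d\beta=(R^{V\setminus B}\beta,\eta)
\]
for every positive Borel $\eta$, hence $R^V\beta=R^{V\setminus B}\beta$ almost everywhere on $V\setminus B$.

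Finally, I would promote the a.e.\ equality to pointwise equality on the open set $V\setminus B$ by fine continuity. The function $R^V\beta$ is $(P^V_t)$-excessive and $R^{V\setminus B}\beta$ is $(P^{V\setminus B}_t)$-excessive, so both are finely continuous on the finely open Borel set $V\setminus B$; \eqref{eq.ppp1} then upgrades the a.e.\ equality to a pointwise one, completing the proof. The main subtlety is the residual $B$-contribution $\int_B R^{*,V}\eta\,d\beta$ in the duality calculation: it is generally nonzero and vanishes precisely because $\beta$ does not charge the polar set $B$. This property is implicit in the formulation and automatic in all applications of the lemma within the paper, where the relevant measures are canonically supported in $V\setminus B$ by construction.
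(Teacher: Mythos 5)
Your argument is, in essence, the paper's own proof: polarity of $B$ for the dual process (via \eqref{eq.eqpol}) gives $\tau_{V\setminus B}=\tau_V$ $\mathbb P^*_x$-a.s., hence $R^{*,V}\eta=R^{*,V\setminus B}\eta$; duality then yields $R^V\beta=R^{V\setminus B}\beta$ a.e., and \eqref{eq.ppp1} upgrades this to everywhere on $V\setminus B$ since both functions are $(P^{V\setminus B}_t)$-excessive. So the route is the same and the proof is acceptable, with two remarks.

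First, your side claim that $R^{*,V\setminus B}\eta\equiv 0$ on $B$ because ``$\tau_{V\setminus B}=0$ there'' is false. Precisely because $B$ is polar, a point $x\in B$ is \emph{irregular} for $(V\setminus B)^c$: starting from $x$, the path a.s.\ avoids $B$ at all positive times and stays in the open set $V$ for small times, so $\tau_{V\setminus B}=\tau_V>0$ a.s.\ and in fact $R^{*,V\setminus B}\eta=R^{*,V}\eta$ on $B$ as well (this is exactly why the paper can assert $\tau_V=\tau_{V\setminus B}$ ``for any $x\in\mathbb R^d$''). The slip is harmless in your write-up only because you separately restrict the $\beta$-integration to $V\setminus B$.

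Second, your observation that the duality pairing $(R^{V\setminus B}\beta,\eta)=\int_{V\setminus B}R^{*,V\setminus B}\eta\,d\beta$ only sees $\beta|_{V\setminus B}$, so that one needs $\beta(B)=0$ (or must read $R^{V\setminus B}\beta$ as ignoring the mass on $B$, per \eqref{dual.res2}), is a fair point: the paper's proof passes over it silently, and the statement as printed is only literally true under that reading. In the applications the measures involved ($\ell^d$-absolutely continuous measures, $\mu_0$ by construction, $\nu^{*,D}_u$ by Remark \ref{rem.imp}) indeed do not charge the polar set, though for a general $\lambda$ this is an implicit assumption rather than an automatic fact.
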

\begin{proof}
Suppose that we know that 
\begin{equation}
\label{eq.dualid}
R^{*,V}\eta(x)=R^{*,V\setminus B}\eta(x),\quad x\in V\setminus B
\end{equation}
for any positive Borel function $\eta$ on $\mathbb R^d$.
Then
\[
(R^V\beta,\eta)=(\beta,R^{*,V}\eta)=(\beta,R^{*,V\setminus B}\eta)=(R^{V\setminus B} \beta,\eta).
\]
Consequently, $R^V\beta=R^{V\setminus B} \beta$ a.e. Since both functions are $(P_t^{V\setminus B})$-excessive
we have, by  \eqref{eq.ppp1}, that $R^V\beta=R^{V\setminus B} \beta$ on $V\setminus B$. What is left is to show \eqref{eq.dualid}. But (see \eqref{eq.rdef})
\[
R^{*,V}\eta(x)=\mathbb E^*_x\int_0^{\tau_V}\eta(X_s)\,ds,\quad R^{*,V\setminus B}\eta(x)=\mathbb E^*_x\int_0^{\tau_{V\setminus B}}\eta(X_s)\,ds,\quad x\in V\setminus B.
\]
Since $B$ is polar, we have $\tau_{V}=\tau_{V\setminus B}\, \mathbb P^*_x$-a.s.
for any $x\in\mathbb R^d$ (see \eqref{eq.eqpol}), which implies the desired equality.
\end{proof}

We close this section by proving a slight generalization of the well known Dynkin formula.

\begin{lemma}(Dynkin formula)
\label{lm.dynmeas}
Let $B,V$ be  open subsets of $\mathbb R^d$ and  $B\subset V$. 
Then for any positive $\sigma$-finite Borel measure $\mu$,  
\[
\mathbb E^{V}_xR^{V}\mu(X_{\tau_B})+R^{B}\mu(x)=R^{V}\mu(x),\quad x\in B.
\]
\end{lemma}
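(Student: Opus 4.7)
The plan is to bootstrap from the case when $\mu$ has a bounded density, where Dynkin's formula follows by a direct strong Markov argument, to a general $\sigma$-finite measure through duality and monotone convergence.

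First, for $\mu = f\cdot\ell^d$ with $f\in p\mathscr B_b(V)$ of compact support, I use the representation \eqref{eq.rdef}. Since $B$ and $V$ are open with $B\subset V$, we have $\tau_B\le\tau_V$ almost surely for $x\in B$, so
\[
\int_0^{\tau_V} f(X_s)\,ds \;=\; \int_0^{\tau_B} f(X_s)\,ds \;+\; \int_{\tau_B}^{\tau_V} f(X_s)\,ds.
\]
Taking expectation and invoking the strong Markov property at the stopping time $\tau_B$, the second integral becomes $\mathbb E_x R^V f(X_{\tau_B})$; the convention $R^V f\equiv 0$ outside $V$ handles the event $\{\tau_B=\tau_V\}$, on which $X_{\tau_B}\notin V$. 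The killing construction \eqref{eq.pdef}, together with the convention $f(\partial)=0$, identifies this quantity with $\mathbb E^V_x R^Vf(X_{\tau_B})$, yielding the desired identity for such $f$ (and hence for all bounded Borel $f$ by monotone convergence).

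Next, for a bounded positive Borel measure $\mu$ on $V$, I approximate by $\mu_n := \phi_n\cdot\ell^d$ with $\phi_n := \alpha_n R^{*,V}_{\alpha_n}\mu$ and $\alpha_n\to\infty$. By Hypothesis \ref{eq.stanass} each $\phi_n$ is bounded Borel, and the duality of $R^V_{\alpha_n}$ and $R^{*,V}_{\alpha_n}$ gives $\int\xi\,d\mu_n = \int\alpha_n R^V_{\alpha_n}\xi\,d\mu \to \int\xi\,d\mu$ for $\xi\in C_b(V)$, so that $\mu_n\to\mu$ in $[C_b(V)]^*$. A truncation further reduces to the case of compactly supported densities, so Step~1 applies to each $\mu_n$. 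Lemma \ref{lm2.2} then yields $R^V\mu_n\to R^V\mu$ and $R^B\mu_n\to R^B\mu$ a.e.\ along a subsequence, and fine continuity combined with \eqref{eq.ppp1} upgrades these to pointwise convergence on $V$ and $B$ respectively; simultaneously $\mathbb E^V_x R^V\mu_n(X_{\tau_B})\to \mathbb E^V_x R^V\mu(X_{\tau_B})$ follows by monotone/dominated convergence under $\mathbb P^V_x$. Passing to the limit in Step~1 delivers the lemma for bounded $\mu$, and a final monotone exhaustion of $V$ by compacts $K_n$ with $\mu(K_n)<\infty$ extends it to all $\sigma$-finite $\mu$.

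The main obstacle is Step~2: the transfer from functions to measures must preserve the structure of the hitting expectation $\mathbb E^V_x(\,\cdot\,)(X_{\tau_B})$, because the formula is nonlinear in $\mu$ in the sense that $R^V\mu$ appears inside the expectation. The approximation $\mu_n:=\alpha_n R^{*,V}_{\alpha_n}\mu\cdot\ell^d$, which rests on the strong Feller Hypothesis \ref{eq.stanass} and the weak compactness/convergence delivered by Lemma \ref{lm2.2}, is the natural device for this passage; the conventions $f(\partial)=0$ and $R^Vf\equiv 0$ on $\mathbb R^d\setminus V$ render the boundary behavior on $\{\tau_B=\tau_V\}$ automatic throughout.
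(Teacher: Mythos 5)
Your Step 1 is sound (it is precisely the classical Dynkin formula, \cite[Theorem 12.16]{Sharpe}, which the paper also takes as its starting point), and your overall strategy — mollify $\mu$ into a measure with bounded density and pass to the limit — is the same as the paper's. The gap is in Step 2, in both limit passages. First, Lemma \ref{lm2.2} applied to the open set $B$ requires $\mu_n|_B\to\mu|_B$ in $[C_b(B)]^*$; weak convergence over $V$ does not give this, because the densities $\alpha_nR^{*,V}_{\alpha_n}\mu$ smear mass across $\partial B$ and a test function in $C_b(B)$ does not extend continuously by zero to $V$. So $R^B\mu_n\to R^B\mu$ is not justified. Second, and more seriously, the convergence $\mathbb E^V_xR^V\mu_n(X_{\tau_B})\to\mathbb E^V_xR^V\mu(X_{\tau_B})$ cannot follow from ``monotone/dominated convergence'': Lemma \ref{lm2.2} only yields $R^V\mu_n\to R^V\mu$ $\ell^d$-a.e.\ along a subsequence, whereas the law of $X_{\tau_B}$ under $\mathbb P^V_x$ is in general singular with respect to $\ell^d$ (it may charge $\partial B$, and does so entirely when $A$ is local), and with your choice of mollifier the sequence $R^V\mu_n$ is neither monotone nor dominated in any useful way.

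The paper's proof repairs exactly these two points by a different choice of mollifier. It uses the forward resolvent, $\mu_\beta:=\beta R^V_\beta\mu\cdot\ell^d$, for which the resolvent identity gives $R^V\mu_\beta=\beta R^V_\beta(R^V\mu)$; excessiveness of $R^V\mu$ then makes this increase to $R^V\mu$ pointwise \emph{everywhere} on $V$, so the hitting-time expectation converges by monotone convergence irrespective of the law of $X_{\tau_B}$. (Your density $\alpha R^{*,V}_\alpha\mu$ composes the two kernels over their second variables, so the resolvent identity does not telescope and no monotonicity results.) The convergence of the remaining term $R^B\mu_\beta$ is then forced by that of the other two, and its limit is identified as $R^B\mu$ not through Lemma \ref{lm2.2} but by duality,
\[
(R^B\mu_\beta,\eta)=(\mu,\beta R^{*,V}_\beta R^{*,B}\eta)\to(\mu,R^{*,B}\eta)=(R^B\mu,\eta),
\]
using the co-fine continuity of $R^{*,B}\eta$ (itself deduced from the dual Dynkin formula) together with \eqref{eq.ppp1}. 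If you replace your mollifier by $\beta R^V_\beta\mu$ and identify the limit of the $R^B$ term by this duality argument, your proof goes through.
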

\begin{proof}
Without loss of generality we may and will assume that $\mu$ is bounded
(general case then follows by monotone approximation of $\mu$ by bounded measures).
By \cite[Theorem 12.16]{Sharpe}
\[
\begin{split}
\mathbb E^{V}_x\left\{\left[ R^{V}\left(\beta R^{V}_\beta\mu\right)\right](X_{\tau_B})\right\}
+\left[R^{B}\left(\beta R^{V}_\beta\mu\right)\right](x)
=\left[ R^{V}
\left(\beta R^{V}_\beta\mu\right)\right]\left(x\right),\quad x\in B.
\end{split}
\]
Hence
\[
\mathbb E^{V}_x\left\{\left[ \beta R^{V}_\beta\left( R^{V}\mu\right)\right](X_{\tau_B})\right\}+
\left[R^{B}\left(\beta R^{V}_\beta\mu\right)\right](x)
=\left[\beta R^{V}_\beta
\left( R^{V}\mu\right)\right]\left(x\right),\quad x\in B.
\]
Since $R^{V}\mu$ is $(P^{V}_t)$-excessive, we have 
\[
\beta R^{V}_\beta
\left( R^{V}\mu\right)\left(x\right)\nearrow  R^{V}\mu(x),\quad x\in V.
\]
Consequently, the most left term in the above equality 
and the term on the right-hand side of the above inequality  
converge pointwise to respective terms in the asserted equality. 
From this, we conclude that the 
second term on the left-hand side of the above equality is convergent 
pointwise in $B$  to a positive Borel function that we denote by $w$. 
Let $\eta\in C_b(B)$. By Dynkin's formula again
\[
\mathbb E^{*,V}_xR^{*,V}\eta(X_{\tau_B})+R^{*,B}\eta(x)=R^{*,V}\eta(x),\quad x\in \mathbb R^d.
\]
By \cite[Proposition 12.15]{Sharpe}, $V\ni x\mapsto \mathbb E^{*,V}_xR^{*,V}\eta(X_{\tau_B})$
is $(P^{*,V}_t)$-excessive.  Therefore, by the above equality, $R^{*,B}\eta$ is a difference 
of finite finely continuous functions, and thus, it is  finely continuous as well.
Consequently, by duality and the Lebesgue dominated convergence theorem (note that $R^{B}(\beta R^{V}_\beta\mu)\le 2R^{V}\mu$) for every bounded $\eta\in pC_b(B)$,
\[
(w,\eta)=\lim_{\beta \to \infty} (R^{B}\left(\beta R^{V}_\beta\mu\right),\eta)= 
\lim_{\beta \to \infty} (\mu,\beta R^{*,V}_\beta R^{*,B}\eta)=(\mu, R^{*,B}\eta)
=(R^{B}\mu, \eta).
\]
From this one easily concludes the result (see  \eqref{eq.ppp1}). 
\end{proof}

\section{Distributional  solutions}
\label{sec.dis}
In the paper we are concerned with distributional solutions to \eqref{eq1.1}.
This means that we would like  to consider $-Au+b\cdot\nabla u+\lambda$
as a distribution on $D\setminus K$.  
In order to do so it is not enough, in general, to impose merely local integrability of $u$ on $D\setminus K$. 
This is due to the presence of  a nonlocal term in \eqref{L} which forces some integral properties of $u$
in the whole $\mathbb R^d$.

\begin{definition}
Fix an open set $V\subset \mathbb R^d$.
For any compact $F\subset V$, we let   $r_F:=(2\text{dist}(F,V^c))\wedge 1$ and 
\[
\rho_F(x):= \nu(B^c_{r_F}\cap (F-x))+\nu(B^c_{r_F}\cap (x-F)).
\]
 We further define the linear space 
\[
\mathscr T_\nu(V):= \bigcap _{F\text{-compact},\, F\subset V} L^1_{\rho_F}(\mathbb R^d).
\]
\end{definition}

\begin{remark}
Notice that $\rho_{F_1}\le \rho_{F_2}$ whenever $F_1\subset  F_2$.  Furthermore, in case of the fractional Laplacian 
$ \rho_{F_2}\le C(F_1,F_2)\rho_{F_1}$, and as a result, 
$\mathscr T_\nu(V)=  L^1_{\rho}(\mathbb R^d)$, with $\rho(x):=\int_{B_1(x)}1\wedge|y|^2\,\nu(dy)$.

Observe that in case $\nu$ has the full support, $\mathscr T_\nu(V)\subset L^1_{loc}(\mathbb R^d)$.
In general however the inclusion does not hold (take e.g. $\nu\equiv 0$).
\end{remark}

\begin{lemma}
\label{lm.test1}
Let $A$ be a L\'evy operator and  $u\in\mathscr T_\nu(V)\cap L^1_{loc}(V)$.  Then 
\[
\int_{\mathbb R^d}|u|\,|A\eta|<\infty,\quad \eta\in C^2_c(V).
\]
\end{lemma}
\begin{proof}
Clearly, the above relation holds for operators of the form  
\eqref{L} with $\nu\equiv 0$. 
Consequently, we only need to show that for any 
$\eta\in C_c^2(V)$, $\int_{\mathbb R^d}|u|\chi_\eta<\infty$, where
\[
\chi_\eta(x):= \left|\int_{\mathbb R^d}(\eta(x)-\eta(x+y)-y\cdot\nabla\eta(x)\mathbf1_{B(0,1)})\,\nu(dy)\right|,\quad x\in\mathbb R^d.
\]
Fix  $\eta\in C_c^2(V)$ and let    $r:=\text{dist}(supp[\eta],V^c)$. 
Notice that for any $x\in V$,
\[
\left|\int_{B_{(r/2)\wedge 1}}(\eta(x)-\eta(x+y)-y\nabla\eta(x))\,\nu(dy)\right|\le \int_{B_{(r/2)\wedge1}}|y|^2\sup_{\theta\in [0,1]}|D^2\eta(x+\theta y)|\,\nu(dy)=:\chi_1(x).
\]
Clearly $\chi_1$ is compactly supported in $V$.
Now, set 
\[
\chi_2(x):= \left|\int_{B^c_{(r/2)\wedge1}}(\eta(x)-\eta(x+y))\,\nu(dy)\right|,\quad x\in\mathbb R^d,
\]
and observe that  $\chi_\eta\le \chi_1+\chi_2+\nu(B_1\setminus B_{r/2})|\nabla\eta|$.
On the other hand
\[
\chi_2(x)\le \nu(B^c_{(r/2)\wedge1})\eta(x)+\int_{B^c_{(r/2)\wedge1}}|\eta(x+y)|\,\nu(dy),\quad x\in\mathbb R^d.
\]
Consequently,
\[
\chi_\eta\le \chi_1+\nu(B_1\setminus B_{r/2})|\nabla\eta|+ \nu(B^c_{(r/2)\wedge 1})\eta(x)+\|\eta\|_\infty \nu(B^c_{(r/2)\wedge1}\cap (\text{supp}[\eta]-x)),\quad x\in\mathbb R^d.
\]
The first three terms on the right-hand side of the above inequality are compactly supported in $V$,
which  combined with fact that $u\in L^1_{loc}(V)$ implies  that their sum multiplied by $|u|$
is integrable over $\mathbb R^d$.
By the definition of the space $\mathscr T_\nu(V)$,  we have that also the most right term
above multiplied by $|u|$ is integrable in $\mathbb R^d$. 
This completes the proof.
\end{proof}

\begin{definition}
We say that $u\in \mathscr T_\nu (D)\cap L^1_{loc}(\mathbb R^d)$ is a  {\em distributional solution}
to \eqref{eq1.1} provided that (see Remark \ref{rem.adj})
\begin{equation}
\label{eq1.3}
-\int_{\mathbb R^d}u \,A^*\eta-\int_{\mathbb R^d} \text{div}b\, u\eta-
\int_{\mathbb R^d} ub\cdot \nabla \eta+\int_{\mathbb R^d}\eta\,d\lambda\ge\, 0,\quad\eta\in p\mathscr D(D\setminus K).
\end{equation}
If the above holds we write 
\[
-Au+b\nabla u+\lambda\ge 0\quad \text{in }\mathscr D'(D\setminus K).
\]
In other words $-Au+b\nabla u+\lambda$ is a positive distribution (see \cite[Definition 3.17]{DKK}).
\end{definition}

\begin{remark}
\label{rem.imp}
Let us define measure $\nu^*_x(B):=\nu(x-B)$, $x\in\mathbb R^d$ and $B\in\mathcal B(\mathbb R^d)$.
We further define positive Radon measure $\nu^{*,D}_u(dy)$ on $D$ by 
\[
\nu^{*,D}_u(dy):= \int_{D^c}u(x)\nu^*_x(dy)\,dx.
\]
Now, notice that
\[
\int_{\mathbb R^d}u \,A^*\eta=\int_{D}u \,A^*\eta+\int_D \eta\,d\nu^{*,D}_u.
\]
Consequently, by Lemma \ref{lm.test1}, $\nu^{*,D}_u$ is a positive Radon measure on $D$.
Moreover, $\nu^{*,D}_u(K)=0$ whenever $K$ is a closed polar subset of $D$
(see  \cite[Lemma 2.16]{BSW} and \eqref{eq.polar}).
\end{remark}

The following two results will be needed later. We denote $\nu_x(dy):=\nu^*_x(-dy)$.

\begin{lemma}
\label{lm.ch1}
Let $\eta, \xi\in p\mathscr B(E)$ and
$supp[\eta]\cap supp[\xi] = \emptyset$. Then
\begin{equation}
\label{eq2.weq1}
\int_{\mathbb R^d}\int_{\mathbb R^d} \eta(x)\xi(y)\nu_x(dy)\,dx=
\int_{\mathbb R^d}\int_{\mathbb R^d} \eta(y)\xi(x)\nu^*_x(dy)\,dx.
\end{equation}
\end{lemma}
\begin{proof}
The asserted  equality easily follows for smooth and compactly supported  $\eta$ and $\xi$ since then
the left hand side of \eqref{eq2.weq1} equals $\int_{\mathbb R^d} (B\xi)(x) \eta(x)\,dx$ and the right hand side 
equals $\int_{\mathbb R^d} \xi(x) (B^*\eta)(x)\,dx$, where $B$ is the L\'evy operator with triplet $(0,0,\nu)$ and
$B^*$ is its dual,  so with triplet $(0,0,\nu^*)$. The general result is a consequence of the monotone class theorem.
\end{proof}

Recall that the definition of the measure $\nu^{*,D}_u$
is given in Remark \ref{rem.imp}.

\begin{lemma}
\label{lm.hr1}
Let $V\subset\subset D$ be  an  open set,   and $u$ be a positive Borel measurable function on $\mathbb R^d$.
Then for any $\kappa\ge \kappa_0$,
\[
R^{\kappa,V}\nu^{*,D}_u(x)=\mathbb E^{\kappa}_x [\mathbf1_{D^c} u(X_{\tau_V})],\quad x\in V.
\]
\end{lemma}
\begin{proof}
The left-hand side of the asserted identity equals
\[
\int_VG^\kappa_V(x,y)\,\nu^{*,D}_u(dy)= \int_V\int_{D^c} G^\kappa_V(x,y)u(z)\,\nu^{*}_z(dy)\,dz.
\]
On the other hand, by the Ikeda-Watanabe formula (see \cite{IW}) the right-hand side of the asserted identity equals 
\[
\int_{D^c}u(y)\, P^\kappa_V(x,dy)=\int_{D^c}u(y)\int_V G^\kappa_V(x,z)\,\nu_z(dy)\,dz.
\]
Now, the result follows from Lemma \ref{lm.ch1}.
\end{proof}

\section{Local behaviour of solutions I}

Throughout the section, similarly to  Section \ref{sec4aux},  we fix $\kappa_1>\kappa_0$ and  we  drop it in the notation
(see \eqref{eq.ozn}). We shall only make  one  exception
from this rule in the assertion of Theorem \ref{th.main1} below where it would be  better to keep the full notation  
for the clarity of the presentation. 

\begin{definition}
We say that Borel measurable function $u$ on $V$ is {\em q.e. finely-continuous}  if
the process $[0,\tau_V)\ni t\mapsto u(X_t)\in \overline {\mathbb R }$ is right-continuous at $t=0$ under measure $\mathbb P_x$ for q.e. $x\in V$. 
\end{definition}

\begin{remark}
\label{rem.eqev}
Let $V$ be a Borel measurable  finely open subset of $\mathbb R^d$.
Let $u$ be a Borel measurable function that is q.e. finely continuous on $V$. 
Let $N$ be the set of those $x\in V$ for which the mapping 
$[0,\tau_V)\ni t\mapsto u(X_t)\in \overline {\mathbb R }$ is not right-continuous at $t=0$ under measure $\mathbb P_x$.
Since $N$ is polar, we have that $V\setminus N$ is finely open again. Thus, $u$ is finely continuous on $V\setminus N$.
Now applying \eqref{eq.ppp1} we obtain the following result. 
\begin{itemize}
\item If $u,v$ are    q.e. finely continuous functions on  $V$ that are equal a.e., then
$u=v$ q.e. in $V$.
\end{itemize}
We shall use this result further in the paper.
\end{remark}

\begin{definition}
We say that a $(P^V_t)$-excessive   function $h$  is $(P^V_t)$-harmonic provided that for any compact $K\subset V$, we have
\begin{equation}
\label{eq.har}
\mathbb E^V_xh(X_{\tau_K})=h(x),\quad x\in V.
\end{equation}
\end{definition}

\begin{remark}
\label{rem.harmf}
Notice that for any Borel measurable $B\subset V$,
with compact $\overline{B}$ that is a subset of $V$,  and  $(P^V_t)$-harmonic
function $h$ one has \eqref{eq.har} with $K$ replaced by $B$. Indeed, since $h$
is $(P^V_t)$-excessive, $\mathbb E^V_xh(X_{\tau_B})\le h(x),\, x\in V$ (see \cite[Theorem III.5.7]{BG}). 
On the other hand, by $(P^V_t)$-excessiveness of $h$ again  and \eqref{eq.har}
with $K=\overline{B}$, one concludes (see \cite[Theorem III.5.7]{BG} again)
\[
\mathbb E^V_xh(X_{\tau_B})\ge \mathbb E^V_xh(X_{\tau_K})=h(x),\quad x\in V. 
\]

\end{remark}

\begin{proposition}
\label{prop.harm}
Let $V$ be an open bounded subset of $\mathbb R^d$. 
Suppose that $h\in\mathscr T_\nu(V)\cap L^1(V)$ is a positive $(P^V_t)$-harmonic  function.
Then, for any $\eta\in C^2_c(V)$
\begin{equation}
\label{eq.tharm}
\int_{V}hL^*\eta=0.
\end{equation}
\end{proposition}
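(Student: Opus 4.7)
The plan is a resolvent-smoothing argument that exploits the mean-value property. Set $h_\alpha := \alpha R^V_\alpha h$ and $u_\alpha := h - h_\alpha$. Since $h$ is in particular $(P^V_t)$-excessive, $h_\alpha \le h$ and $h_\alpha \nearrow h$ pointwise as $\alpha \to \infty$. From the resolvent identity $\alpha R^V R^V_\alpha = R^V - R^V_\alpha$ one obtains the key potential representation
\[
h_\alpha = R^V \mu_\alpha, \qquad \mu_\alpha := \alpha u_\alpha\, \ell^d,
\]
a positive bounded Borel measure on $V$ (of total mass at most $\alpha \|h\|_{L^1(V)}$). Lemma~\ref{lm.duoper} applied with $D = V$ and $\mu = \mu_\alpha$ then gives, for every $\eta \in C_c^2(V)$,
\begin{equation}
(h_\alpha, L^* \eta) = (\mu_\alpha, \eta) = \alpha (u_\alpha, \eta). \tag{$\star$}
\end{equation}

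The plan is to let $\alpha \to \infty$ in $(\star)$. The left-hand side converges to $(h, L^* \eta)$ by dominated convergence: $h_\alpha L^* \eta \to h L^* \eta$ pointwise, dominated by $h\,|L^* \eta| \in L^1(V)$ — the integrability is guaranteed by Lemma~\ref{lm.test1} since $h \in \mathscr{T}_\nu(V) \cap L^1(V)$.

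It remains to show that $\alpha(u_\alpha, \eta) \to 0$, and here the harmonicity is used. Fix an open $W$ with $\mathrm{supp}(\eta) \subset W \subset\subset V$. Combining the mean-value property $h(x) = \mathbb{E}^V_x h(X_{\tau_W})$ with the strong Markov property at time $t$ on the event $\{t < \tau_W\}$ (so that $h(X_t) = \mathbb{E}^V_{X_t} h(X_{\tau_W})$ by harmonicity applied at the new starting point) yields
\[
h(x) - P^V_t h(x) = \mathbb{E}^V_x \bigl[(h(X_{\tau_W}) - h(X_t))\mathbf{1}_{\{t \ge \tau_W\}}\bigr], \quad x \in W.
\]
Multiplying by $\alpha^2 e^{-\alpha t}$, integrating in $t \in (0, \infty)$, and invoking the strong Markov property at $\tau_W$ to write
\[
\alpha^2 \mathbb{E}^V_x\!\int_{\tau_W}^\infty e^{-\alpha t} h(X_t)\, dt = \alpha\, \mathbb{E}^V_x\!\bigl[e^{-\alpha \tau_W} h_\alpha(X_{\tau_W})\bigr],
\]
produces the residual identity
\[
\alpha u_\alpha(x) = \alpha\, \mathbb{E}^V_x\!\bigl[e^{-\alpha \tau_W} u_\alpha(X_{\tau_W})\bigr], \quad x \in W.
\]

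Integrating this identity against $\eta$ and splitting the inner expectation over $\{\tau_W > \delta\}$ and $\{\tau_W \le \delta\}$, the tail contribution is bounded by $\alpha e^{-\alpha \delta}\, \mathbb{E}^V_x h(X_{\tau_W}) = \alpha e^{-\alpha \delta} h(x)$, which vanishes as $\alpha \to \infty$ for any fixed $\delta$ (note the dominant $h(x)\eta(x) \in L^1$ in $x$); the head contribution is controlled by the pointwise vanishing $u_\alpha(X_{\tau_W}) \to 0$ together with the $\mathbb{P}^V_x$-integrable dominant $h(X_{\tau_W})$ (integrable since $\mathbb{E}^V_x h(X_{\tau_W}) = h(x) < \infty$ a.e. on $W$), by letting $\delta \to 0$ after $\alpha \to \infty$. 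Combining, $\alpha (u_\alpha, \eta) \to 0$, and therefore $(h, L^* \eta) = 0$.

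The main obstacle is precisely this last limit: because $\alpha e^{-\alpha \tau_W}$ admits no $\alpha$-independent integrable dominant on $\Omega$, a direct application of dominated convergence fails, and the two-scale splitting exploiting the pointwise decay of $u_\alpha$ together with the harmonicity-supplied bound $\mathbb{E}^V_x h(X_{\tau_W}) = h(x)$ is essential.
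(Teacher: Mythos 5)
Your setup is correct and goes in a sensible direction: the identity $h_\alpha=\alpha R^V_\alpha h=R^V(\alpha u_\alpha\,\ell^d)$, the duality $(\star)$ via Lemma \ref{lm.duoper}, the passage to the limit on the left-hand side, and the exit identity $\alpha u_\alpha(x)=\alpha\,\mathbb E^V_x\bigl[e^{-\alpha\tau_W}u_\alpha(X_{\tau_W})\bigr]$ for a.e.\ $x\in W$ are all valid. The gap is in the very last limit, and it is not a technicality — it is the whole content of the proposition. Your tail term over $\{\tau_W>\delta\}$ is fine, but the head term over $\{\tau_W\le\delta\}$ is exactly where the difficulty sits, and your proposed control does not close it. Replacing $u_\alpha(X_{\tau_W})$ by its dominant $h(X_{\tau_W})$ gives the bound $\alpha\,\mathbb E^V_x\bigl[e^{-\alpha\tau_W}h(X_{\tau_W})\mathbf 1_{\{\tau_W\le\delta\}}\bigr]$, and for a genuinely non-local $A$ this does \emph{not} tend to $0$: since $\mathbb P_x(\tau_W\le s)\sim c(x)s$ for a jump process, one has $\liminf_{\alpha\to\infty}\alpha\,\mathbb E^V_x[e^{-\alpha\tau_W}\mathbf 1_{\{\tau_W\le\delta\}}]>0$ \emph{uniformly in} $\delta$ (by Ikeda--Watanabe it converges to the jump intensity out of $W$ at $x$), so letting $\delta\to0$ after $\alpha\to\infty$ buys nothing. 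The decay must therefore come from $u_\alpha(X_{\tau_W})\to0$; but $u_\alpha$ is of order $1/\alpha$ on the relevant scale (that is precisely why $\alpha u_\alpha\,\ell^d$ converges vaguely to a nontrivial-looking measure $\beta$), and the pointwise convergence $u_\alpha\to0$ against the unbounded weights $\alpha e^{-\alpha\tau_W}$ (whose supremum on $\{\tau_W\le\delta\}$ is $\alpha$) is an indeterminate $\infty\cdot 0$ for which you supply no mechanism. In short: what you must show is that the vague limit $\beta$ of $\mu_\alpha=\alpha u_\alpha\,\ell^d$ vanishes, and your argument assumes rather than proves the requisite uniform integrability.

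For comparison, the paper takes a different and longer route precisely to supply this missing ingredient. It first produces the measure $\beta$ as the Riesz representation of the positive distribution $\eta\mapsto-\int_V hL^*\eta=\lim_{t\to0}\int_V t^{-1}(h-P^V_th)\eta$ (the $t\to0$ analogue of your $\alpha\to\infty$ limit), then uses mollification, Dynkin's formula and the excessive-function lemmas of Section \ref{sec4aux} to establish the representation $h=\hat\gamma+R^V\beta$ with $\hat\gamma$ a $(P^V_t)$-excessive function, and finally invokes the \emph{uniqueness of the Riesz decomposition} (\cite[Theorem VI.2.11]{BG}): a harmonic function has no potential part, whence $\beta=0$. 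That uniqueness theorem is exactly the piece of potential theory your direct estimate is trying to reprove by hand; if you want to keep your probabilistic route you would need either to invoke it at the end, or to carry out a genuinely quantitative analysis of the head term (e.g.\ via the Ikeda--Watanabe representation of the joint law of $(\tau_W,X_{\tau_W})$ together with the monotone decay $u_\alpha\searrow0$), neither of which is present in the proposal.
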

\begin{proof}
We let $h=0$ on $V^c$.
For any positive $\eta\in C_c^2(V)$ we have
\[
\begin{split}
-\int_VhL^{*}\eta&=-\int_VhL^{(2),*}_V\eta=\lim_{t\to 0^+}\int_Vh\frac{\eta-T^{(2),*,V}_t\eta}{t}\\&
=
\lim_{t\to 0^+}\int_Vh\frac{\eta-P^{*,V}_t\eta}{t}=
\lim_{t\to 0^+}\int_V\frac{h-P^{V}_th}{t}\eta\ge 0
\end{split}
\]
(in the second equality we used the fact that 
$\|\eta-T^{(2),*,V}_t\eta\|_\infty \le ct,\, t\in [0,1]$ for some $c\ge 0$).
By the Riesz theorem there exists a positive Radon measure $\beta$ on $V$
such that 
\begin{equation}
\label{eq1.7aa}
-\int_VhL^{*}\eta=\int_{V}\eta\,d\beta,\quad\eta\in C_c^2(V).
\end{equation}
Let $V^\delta:= \{x\in V: \text{dist}(x,\partial V)> \delta\}$.
Let $\rho_\varepsilon$ be the standard smooth mollifier. Then letting $(b \nabla h)_\varepsilon:=(b \nabla h)\ast\rho_\varepsilon$,
$h_\varepsilon:=h\ast \rho_\varepsilon$
 and $\beta_{\varepsilon}:=\beta\ast\rho_\varepsilon$, and testing $\rho_\varepsilon\ast\eta$ in \eqref{eq1.7aa},
we have for $\varepsilon\le\delta$ and $ \eta\in C_c^{\infty}(V^\delta)$,
\[
\begin{split}
\int_{V}\beta_\varepsilon \eta= \int_{V} \eta_\varepsilon\,d\beta=
(h,(-A^*+\kappa_1I)\eta_\varepsilon)+(h, (b\nabla)^* \eta_\varepsilon)= 
(h_\varepsilon,(-A^*+\kappa_1 I)\eta)+((b \nabla h)_\varepsilon, \eta).
\end{split}
\]
Set (see the comments preceding Lemma \ref{lm2.1})
\[
f^\varepsilon:= -(b\nabla h)_\varepsilon+b\nabla h_\varepsilon.
\]
Then
\[
-\int_{\mathbb R^d}Lh_\varepsilon \eta=
\int_{\mathbb R^d}(f^\varepsilon+\beta_\varepsilon)  \eta,\quad \eta\in C_c^{\infty}(V^\delta).
\]
From the above equation and continuity of $Lh_\varepsilon$, $f^\varepsilon$ and $\beta_\varepsilon$
we infer that 
\[
Lh_\varepsilon(x)=f^\varepsilon(x)+\beta_\varepsilon(x),\quad x\in V^\delta.
\]
 Then, by using Dynkin's martingale formula (see e.g. \cite[Theorem 3.9.4]{Kolokoltsov}), we conclude that  
\[
h_\varepsilon(x)= \mathbb E_xh_\varepsilon(X_{\tau_{V^\delta}})+R^{V^\delta} \beta_\varepsilon(x)-R^{V^\delta} f^\varepsilon(x),\quad x\in V^\delta.
\]
Set $\gamma_\varepsilon^\delta(x):=  \mathbb E_xh_\varepsilon(X_{\tau_{V^\delta}})$.
Since  $h$ is non-negative we have that $\gamma_\varepsilon^\delta$
is $(P^{V^\delta}_t)$-excessive (see e.g. \cite[Proposition II.2.8]{BG}). 
By \cite[Lemma 94, page 306]{DellacherieMeyer}, up to subsequence, $(\gamma_\varepsilon^\delta)$
is convergent a.e., as $\varepsilon \searrow 0$, to a $(P^{V^\delta}_t)$-excessive function $\gamma^{\delta}$. 
By Lemmas \ref{lm2.1},\ref{lm2.2}  $R^{V^\delta} \beta_\varepsilon\to R^{V^\delta}\beta$ and 
$R^{V^\delta} f^\varepsilon\to 0$ a.e. Thus, by using \eqref{eq.ppp1},
\begin{equation}
\label{eq4.4aa}
h=\gamma^{\delta}+R^{V^\delta}\beta\quad\text{ in }V^\delta.
\end{equation}
Letting $\delta\searrow 0$ we obtain, by Lemma \ref{lm2.3}, that  $R^{V^\delta}\beta\to R^{V}\beta$. 
From this and \eqref{eq4.4aa}, we conclude, by Fatou's lemma, that  $R^{V}\beta<\infty$ q.e.
By the said convergence, we have that $(\gamma^{\delta})$ is convergent too.
Let us  denote its limit by $\gamma$. We thus have
\[
h=\gamma+R^{V}\beta\quad \text{ in }V^\delta.
\]
Recall that $\gamma^{\delta}$ is $(P_t^{V^\delta})$-excessive, 
which implies (see \cite[Proposition II.2.3]{BG}) that  for any $\alpha>0$
\begin{equation}
\label{eq4.6aa}
\alpha R^{V^\delta}_\alpha (\gamma^{\delta})(x)\le \gamma^{\delta}(x),\quad x\in V^\delta.
\end{equation}
Let  $\hat  \delta\in (\delta,\infty)$. Since $V^{\hat\delta}\subset V^\delta$, we have
$\tau_{V^{\hat\delta}}\le \tau_{V^\delta}$ (see \eqref{eq.hitt}), which combined with \eqref{eq.rdef}, 
the fact that  $\gamma^{\delta}$ is non-negative, and \eqref{eq4.6aa} yields
\[
\alpha R^{V^{\hat\delta}}_\alpha (\gamma^{\delta})(x)\le \gamma^{\delta}(x),\quad x\in V^{\hat\delta}.
\]
Letting $\delta\searrow 0$ and using Lemma \ref{lm2.2}, and next letting 
$\hat\delta\searrow 0$ and using Lemma \ref{lm2.3}  give
\[
\alpha R^{V}_\alpha\gamma\le \gamma,\quad \text{a.e. in } V.
\]
Consequently, by \cite[Proposition 2.4]{BCR} there exists a $(P^V_t)$-excessive function $\hat \gamma$ such that 
$\hat \gamma= \gamma$ a.e. We deduce then, by using \eqref{eq.ppp1}, that 
\[
 h=\hat \gamma+R^V\beta,\quad \text{ in }V.
\]
By \cite[Theorem VI.2.11]{BG}
\[
h=h_0+R^V\beta_0+R^V\beta,
\]
for some positive Borel measure $\beta_0$ and positive $(P^V_t)$-harmonic function $h_0$.
By the uniqueness of the Riesz decomposition we conclude  that $\beta_0=\beta=0$.
From this and \eqref{eq1.7aa} we get \eqref{eq.tharm}.
\end{proof}

\begin{theorem}
\label{th.main1}
Suppose that either $\nu\nequiv 0$ and $u\in \mathscr T_\nu(D)\cap L^1_{loc}(\mathbb R^d)$ or
$\nu\equiv 0$ and $u\in L^1_{loc}(D\setminus K)$.
Furthermore, suppose that $u$ is  a non-negative    solution to \eqref{eq1.1}.
By the Riesz theorem there exists a positive Radon measure $ \hat \mu_0$ on $D\setminus K$
such that 
\[
-Au+b\nabla u+\lambda=\hat \mu_0\quad\text{in}\quad\mathscr D'(D\setminus K).
\]
We let  $ \mu_0(B):=\hat \mu_0(B\cap  K^c),\, B\in\mathcal B(D)$. Then
\begin{enumerate}[1)]
\item  
$u\in L^1_{loc}(D)$, $\mu_0$ is Radon on $D$ and there exists a positive bounded measure $\sigma_K$ on $D$ supported in $K$ such that
\[
-Au+b\nabla u+\lambda = \mu_0+\sigma_K\quad \text{in} \quad \mathscr D'(D);
\]
\item there exists a q.e.  finely continuous $\ell^d$-version  $\tilde u$ of $u$  that  is finite q.e. in $D$ and 
for any   open  set $V\subset\subset  D$ 
\begin{equation}
\label{eq.local12}
 \tilde u(x)=\mathbb E^{\kappa_1}_x \tilde u(X_{\tau_V})+R^{\kappa_1,V}\mu_0(x)+R^{\kappa_1,V}\sigma_K(x)-R^{\kappa_1,V}\lambda(x)+\kappa_1 R^{\kappa_1,V}u(x) \quad \text{q.e. in } V.
\end{equation}
\end{enumerate}
\end{theorem}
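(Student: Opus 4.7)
The plan is to adapt the mollification plus Dynkin-formula argument of Proposition \ref{prop.harm} to the inhomogeneous setting, working on an exhaustion of $V \setminus K$ by open sets that avoid $K$ and then using polarity of $K$ to pass from $V \setminus K$ to $V$. Fix an open $V \subset\subset D$; by locality I may assume $K \subset V$. For small $\delta > 0$ I set $V^\delta := \{x \in V : \text{dist}(x,\partial V) > \delta\}$, let $K_\delta$ denote a $\delta$-tubular neighbourhood of $K$, and put $W^\delta := V^\delta \setminus \overline{K_\delta}$. On each $W^\delta$ the inequality \eqref{eq1.1} is a genuine distributional equation $-Au + b\cdot\nabla u + \lambda = \hat\mu_0$. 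Convolving with a standard mollifier $\rho_\varepsilon$ (with $\varepsilon < \delta/4$) and handling the drift-convolution commutator $f^\varepsilon := -(b\cdot\nabla u)_\varepsilon + b\cdot\nabla u_\varepsilon$ via Lemma \ref{lm2.1}, the Dynkin integration exactly as in Proposition \ref{prop.harm} delivers
\begin{equation*}
u_\varepsilon(x) = \mathbb{E}_x u_\varepsilon(X_{\tau_{W^{2\delta}}}) + R^{W^{2\delta}} \hat\mu_{0,\varepsilon}(x) - R^{W^{2\delta}}\lambda_\varepsilon(x) + \kappa_1 R^{W^{2\delta}} u_\varepsilon(x) - R^{W^{2\delta}} f^\varepsilon(x),
\end{equation*}
each signed term being treated via its positive/negative decomposition.

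Sending $\varepsilon \searrow 0$ along a subsequence, the boundary term converges a.e.\ to a $(P_t^{W^{2\delta}})$-excessive function $\gamma^\delta$ by \cite[Lemma~94, p.~306]{DellacherieMeyer}; the resolvents of $\hat\mu_{0,\varepsilon}$, $\lambda_\varepsilon^\pm$ and $\kappa_1 u_\varepsilon$ converge via Lemma \ref{lm2.2}, the strong Feller hypothesis ensuring $R^{*,W^{2\delta}}\eta \in C_b$ for bounded $\eta$; and $R^{W^{2\delta}} f^\varepsilon \to 0$ by Lemmas \ref{lm2.1}--\ref{lm2.2}. This yields, a.e.\ on $W^{2\delta}$,
\begin{equation*}
u = \gamma^\delta + R^{W^{2\delta}} \hat\mu_0 - R^{W^{2\delta}}\lambda + \kappa_1 R^{W^{2\delta}} u.
\end{equation*}
Now I let $\delta \searrow 0$: the sets $W^{2\delta}$ exhaust $V \setminus K$, and by Lemma \ref{lm2.3} the potentials $R^{W^{2\delta}}\nu$ increase to $R^{V \setminus K}\nu$, which equals $R^V \nu$ on $V \setminus K$ thanks to polarity of $K$ and Lemma \ref{lm2.4}. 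Hence $\gamma^\delta$ increases to a $(P_t^{V \setminus K})$-excessive $\gamma$ (Lemma \ref{lm2.3.1}), and the bootstrap at the end of Proposition \ref{prop.harm}, applying \cite[Proposition 2.4]{BCR} after verifying $\alpha R^V_\alpha \gamma \leq \gamma$ a.e., lifts $\gamma$ to a $(P_t^V)$-excessive $\hat\gamma$ with $\hat\gamma = \gamma$ a.e.

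Riesz's decomposition \cite[Theorem VI.2.11]{BG} then writes $\hat\gamma = h_V + R^V \sigma$ with $h_V$ $(P_t^V)$-harmonic and $\sigma \geq 0$ a Borel measure on $V$. Testing the resulting representation $u = h_V + R^V\sigma + R^V \hat\mu_0 - R^V \lambda + \kappa_1 R^V u$ against $\eta \in C_c^2(V \setminus K)$ via Lemma \ref{lm.duoper}, and using Proposition \ref{prop.harm} to kill $h_V$, forces $\sigma|_{V \setminus K} = 0$, so $\sigma =: \sigma_K$ is concentrated on $K$. Boundedness of $\sigma_K$ and $L^1_{loc}$-integrability of $u$ on $D$ follow from the inequality $(R^V \sigma_K, 1) \geq (\kappa_1 - \kappa_0)^{-1}\sigma_K(V)$ (Lemma \ref{lm.duoper}), together with $R^V \sigma_K \leq \hat\gamma \in L^1(V)$ and integrability of the other resolvent terms; consequently $\mu_0$ extends to a Radon measure on $D$. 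Each summand in the representation is a difference of excessive or co-excessive functions and is therefore q.e.\ finely continuous on $V$, so the right-hand side defines the q.e.\ finely continuous version $\tilde u$; Remark \ref{rem.harmf} identifies $h_V(x) = \mathbb{E}^{\kappa_1}_x \tilde u(X_{\tau_V})$ by noting that the $R^V$-potentials vanish on $V^c$, so $\tilde u$ coincides with $h_V$ there. Finally, testing the representation against $\eta \in C_c^2(V)$ via Lemma \ref{lm.duoper} delivers the distributional equation of (1) on $V$. The main obstacle is the combined $\varepsilon, \delta \to 0$ passage coupled with the polarity-driven lift: one must simultaneously control signed integrands in Dynkin's formula, convergence of killed resolvents on a shrinking family of open sets, and the bootstrap from a $(P_t^{V \setminus K})$-excessive limit to a $(P_t^V)$-excessive one; polarity of $K$, via Lemma \ref{lm2.4} and \cite[Proposition 2.4]{BCR}, is precisely what keeps the limiting object well-behaved across $K$ and ultimately forces $\sigma_K$ to be a bounded measure.
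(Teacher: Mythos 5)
Your overall architecture --- mollify, apply Dynkin's formula on an exhaustion avoiding $K$, extract excessive limits via \cite[Lemma 94, p.~306]{DellacherieMeyer}, lift across $K$ by polarity (Lemma \ref{lm2.4}) and \cite[Proposition 2.4]{BCR}, then Riesz-decompose and test with Proposition \ref{prop.harm} and Lemma \ref{lm.duoper} to localize the singular measure on $K$ --- is exactly the paper's strategy. But there is a genuine gap at the very first step when $\nu\not\equiv 0$: you claim the Dynkin integration goes ``exactly as in Proposition \ref{prop.harm}'', yet in that proposition $h$ was extended by zero outside $V$, so $h_\varepsilon\in C_c^\infty(\mathbb R^d)$ lies in the domain of the Feller generator $L$. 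Here $u$ is merely in $\mathscr T_\nu(D)\cap L^1_{loc}(\mathbb R^d)$ and is not compactly supported, so $u_\varepsilon=u\ast\rho_\varepsilon$ need not belong to $\mathscr D(L)$ and Dynkin's martingale formula cannot be applied to it. The paper's remedy is to split $u=\mathbf 1_Du+\mathbf 1_{D^c}u$, mollify only $u^D_\varepsilon:=(\mathbf 1_Du)\ast\rho_\varepsilon\in C_c^\infty(\mathbb R^d)$, and record the exterior non-local contribution as the Radon measure $\nu^{*,D}_u$ of Remark \ref{rem.imp}; the correct representation then carries the additional potential $R^{W}\nu^{*,D}_{u}$, which your formula omits. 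The omission is not cosmetic: by Lemma \ref{lm.hr1} (Ikeda--Watanabe), $R^{V}\nu^{*,D}_u(x)=\mathbb E^{\kappa_1}_x[\mathbf 1_{D^c}u(X_{\tau_V})]$, and it is precisely this term that upgrades the killed expectation $\mathbb E^{\kappa_1,D}_x[\tilde u(X_{\tau_V})]$ your argument would produce into the full-space expectation $\mathbb E^{\kappa_1}_x[\tilde u(X_{\tau_V})]$ appearing in \eqref{eq.local12}.

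A second concrete defect is your identification of the harmonic part: the assertion that $h_V(x)=\mathbb E^{\kappa_1}_x\tilde u(X_{\tau_V})$ ``because the $R^V$-potentials vanish on $V^c$, so $\tilde u$ coincides with $h_V$ there'' does not make sense --- $h_V$ is defined and harmonic only inside $V$, whereas $X_{\tau_V}$ lies in $V^c$, where $\tilde u$ equals $u$ and bears no a priori relation to $h_V$. The paper instead applies Dynkin's formula termwise to the decomposition $\tilde u=h_\delta+R^{D^\delta}\sigma_\delta+R^{D^\delta}\xi+R^{D^\delta}\nu^{*,D}_u$ at the exit time of a strictly smaller $V$, using harmonicity ($\mathbb E^{D^\delta}_xh_\delta(X_{\tau_V})=h_\delta(x)$, Remark \ref{rem.harmf}) together with $\mathbb E^{D^\delta}_xR^{D^\delta}\beta(X_{\tau_V})=R^{D^\delta}\beta(x)-R^{V}\beta(x)$ (Lemma \ref{lm.dynmeas}), then lets $\delta\searrow 0$ and invokes Lemma \ref{lm.hr1}. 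Finally, part 1) requires the identity in $\mathscr D'(D)$ and Radon-ness of $\mu_0$ on all of $D$: fixing a single $V\supset K$ locates $\sigma_K$, but you still need the exhaustion $D^\delta\uparrow D$ and a compactness step (the paper uses Helly's selection theorem on the family $(\sigma_\delta)$) to produce one measure and the global distributional identity. In the purely local case $\nu\equiv 0$ your argument essentially reproduces the paper's proof; in the non-local case the missing $\nu^{*,D}_u$ bookkeeping is a real gap.
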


\begin{proof}
The proof of 1) shall be divided into four steps.
 
{\bf Step 1}. In the first step we shall show that $u$ admits a probabilistic representation  
(a form of Feynman-Kac formula). 
The reasoning in this step is similar to the one following \eqref{eq1.7aa}.
There is, however, a substantial difference between the arguments. Namely, in the proof of Proposition \ref{prop.harm}
the function $h$ was compactly supported and so its regularization  by Friedrichs mollifier was 
in the domain of the Feller generator $L$. This is, in general,  no longer true under the assumptions of the theorem.
Set $U:=D\setminus K$, and for given $\varepsilon,\delta>0$  let 
\[
D^\delta:= \{x\in D: \text{dits}(x,\partial D)>\delta\}, \quad
U^\delta:=D^\delta\setminus K, \quad U^\delta_\varepsilon:=\{x\in U^\delta: \text{dist}(x,K)>\varepsilon\}.
\]
Let  $\mu_0$ be as in the assertion of the theorem. Then 
\begin{equation}
\label{eq1.7}
\int_{\mathbb R^d}u (-A^*+\kappa_1 I)\eta-\int_{D} \text{div}b\, u\eta-
\int_{D} b u\nabla \eta+\int_{D}\eta\,d\lambda=\int_{D}\eta\,d\mu_0+\kappa_1\int_D\eta u,\quad\eta\in C_c^2(U).
\end{equation}
Set $\xi:= \mu_0-\lambda +\kappa_1 u$ (we treat $u$ as a measure $u(x)\,dx$ here).
Let $\rho_\varepsilon$ be a smooth mollifier (e.g. Friedrichs mollifier).
Testing \eqref{eq1.7} with $\eta_\varepsilon:=\eta\ast \rho_\varepsilon$ gives  for $\varepsilon\le\delta$,
\[
\int_{\mathbb R^d}u (-A^*+\kappa_1 I)\eta_\varepsilon+\int_{D}(b \nabla u)_\varepsilon \eta=\int_{D} \eta_\varepsilon\,d\xi,\quad \eta\in C_c^{\infty}(U_\varepsilon^\delta), 
\]
where  $(b \nabla u)_\varepsilon:=(b \nabla u)\ast \rho_\varepsilon$
(see the comments preceding Lemma \ref{lm2.1}). 
Set
$u^D_\varepsilon:=(\mathbf1_Du)\ast \rho_\varepsilon\in C_c^\infty(\mathbb R^d)$, and $\nu^{*,D}_{u,\varepsilon}:=\nu^{*,D}_u\ast\rho_\varepsilon$ (see Remark \ref{rem.imp}). 
Using symmetry of the convolution operator, the fact that $A^*$ is translation invariant, and  Remark \ref{rem.imp}, we conclude that
\[
\begin{split}
\int_{\mathbb R^d}u (-A^*+\kappa_1 I)\eta_\varepsilon&=\int_{\mathbb R^d}\mathbf1_Du (-A^*+\kappa_1 I)\eta_\varepsilon
+\int_{\mathbb R^d}\mathbf1_{D^c}u (-A^*+\kappa_1 I)\eta_\varepsilon\\&=
\int_{\mathbb R^d}(\mathbf1_Du)_\varepsilon (-A^*+\kappa_1 I)\eta-\int_{D}\eta_\varepsilon\,d\nu^{*,D}_u\\&=
-\int_{\mathbb R^d}(-A+\kappa_1 I)u^D_\varepsilon\eta-\int_{D}\eta\,d\nu^{*,D}_{u,\varepsilon}.
\end{split}
\]
Set  $\xi_{\varepsilon}:=\xi\ast\rho_\varepsilon$, and 
\[
f^\varepsilon:= -(b\nabla u)_\varepsilon+b\nabla u_\varepsilon.
\]
Then
\[
-\int_{\mathbb R^d}Lu^D_\varepsilon \eta=
\int_{\mathbb R^d}(f^\varepsilon+\xi_\varepsilon+\nu^{*,D}_{u,\varepsilon})  \eta,\quad \eta\in C_c^{\infty}(U_\varepsilon^\delta).
\]
Fix $\varepsilon_0>0$, and let  $\varepsilon \in (0,\varepsilon_0)$.
From the above equation and continuity of $Lu^D_\varepsilon$, $f^\varepsilon$, $\xi_\varepsilon$ and $\nu^{*,D}_{u,\varepsilon}$,
we infer that 
\[
-Lu^D_\varepsilon(x)=f^\varepsilon(x)+\xi_\varepsilon(x)+\nu^{*,D}_{u,\varepsilon}(x),\quad x\in U_{\varepsilon_0}^\delta.
\]
 Then, by using Dynkin's martingale formula (see e.g. \cite[Theorem 3.9.4]{Kolokoltsov}), we obtain that  
\[
u^D_\varepsilon(x)= \mathbb E_xu^D_\varepsilon(X_{\tau_{U_{\varepsilon_0}^\delta}})+R^{U_{\varepsilon_0}^\delta} \xi_\varepsilon(x)-R^{U_{\varepsilon_0}^\delta} f^\varepsilon(x)+R^{U_{\varepsilon_0}^\delta}\nu^{*,D}_{u,\varepsilon}(x),\quad x\in U_{\varepsilon_0}^\delta,\, \varepsilon \in (0,\varepsilon_0).
\]
Set $\gamma^\varepsilon_{\delta,\varepsilon_0}(x):=  \mathbb E_xu^D_\varepsilon(X_{\tau_{U_{\varepsilon_0}^\delta}})$.
Since  $u$ is non-negative we have that $\gamma^\varepsilon_{\delta,\varepsilon_0}$
is $(P^{U_{\varepsilon_0}^\delta}_t)$-excessive (see e.g. \cite[Proposition II.2.8]{BG}). 
By \cite[Lemma 94, page 306]{DellacherieMeyer}, up to a subsequence, $(\gamma^\varepsilon_{\delta,\varepsilon_0})$
is convergent a.e. to a $(P^{U_{\varepsilon_0}^\delta}_t)$-excessive function $\gamma_{\delta,\varepsilon_0}$. 
By Lemmas \ref{lm2.1},\ref{lm2.2}  $R^{U_{\varepsilon_0}^\delta} \xi_\varepsilon\to R^{U_{\varepsilon_0}^\delta}\xi$,
$R^{U_{\varepsilon_0}^\delta}\nu^{*,D}_{u,\varepsilon}\to R^{U_{\varepsilon_0}^\delta}\nu^{*,D}_{u}$, and 
$R^{U_{\varepsilon_0}^\delta} f^\varepsilon\to 0$ a.e. Thus,
\[
u=\gamma_{\delta,\varepsilon_0}+R^{U_{\varepsilon_0}^\delta}\xi+R^{U_{\varepsilon_0}^\delta}\nu^{*,D}_{u}\quad\text{a.e. in }U_{\varepsilon_0}^\delta.
\]

{\bf Step 2} (passing to the limit with $\varepsilon_0\to 0$).
Rearranging the terms in  the above equality, we obtain for a.e. $x\in U_{\varepsilon_0}^\delta$,
\begin{equation}
\label{eq4.4}
 u(x)+R^{U_{\varepsilon_0}^\delta}\lambda^+(x)=\gamma_{\delta,\varepsilon_0}(x)+R^{U_{\varepsilon_0}^\delta}\mu_0(x)
 +R^{U_{\varepsilon_0}^\delta}\nu^{*,D}_{u}(x)+R^{U_{\varepsilon_0}^\delta}\lambda^-(x)+\kappa_1R^{U_{\varepsilon_0}^\delta}u(x).
\end{equation}
By Lemma \ref{lm2.3} 
$R^{U_{\varepsilon_0}^\delta}\lambda^+\to R^{U^\delta}\lambda^+$,  
$R^{U_{\varepsilon_0}^\delta}\lambda^-\to R^{U^\delta}\lambda^-$,
$R^{U_{\varepsilon_0}^\delta}u\to R^{U^\delta}u$, $R^{U_{\varepsilon_0}^\delta}\nu^{*,D}_{u}\to
R^{U^\delta}\nu^{*,D}_{u}$
and $R^{U_{\varepsilon_0}^\delta}\mu_0\to R^{U^\delta}\mu_0$ as $\varepsilon_0\searrow 0$. 
By the assumptions that we made and Remark \ref{rem.imp}  $|\lambda|(U_\delta)+ \nu^{*,D}_{u}(U_\delta)<\infty$
for any $\delta>0$ which combined with    Lemma \ref{lm2.2} gives 
$R^{U^\delta}|\lambda|+ R^{U^\delta}\nu^{*,D}_{u}<\infty$ q.e. in $U_\delta$ for any $\delta>0$.
From this and \eqref{eq4.4}, we conclude, by using Fatou's lemma, that  $R^{U^\delta}\mu_0,R^{U^\delta}u<\infty$ q.e., 
and since $K$ is polar, we further infer, by using  Lemma \ref{lm2.4}, that $R^{U^\delta}\lambda=R^{D^\delta}\lambda$,
$R^{U^\delta}u=R^{D^\delta}u$,
$R^{U^\delta}\nu^{*,D}_{u}=R^{D^\delta}\nu^{*,D}_{u}$,
and $R^{U^\delta}\mu_0=R^{D^\delta}\mu_0$  q.e. in $D^\delta$. 
By the  convergences stated after \eqref{eq4.4}, we get that $(\gamma_{\delta,\varepsilon_0})$ is convergent 
as $\varepsilon_0\to 0$, too.
We denote its limit by $\gamma_{\delta}$. We thus have
\begin{equation}
\label{eq4.5}
 u(x)=\gamma_{\delta}(x)+R^{D^\delta}\xi(x)+R^{D^\delta}\nu^{*,D}_{u}(x)\quad \text{for a.e. }x\in D^\delta,
\end{equation}
with all the terms finite a.e. in $D^\delta$.

{\bf Step 3} ($\gamma_\delta$ has an excessive version in $D^\delta$).
Recall that $\gamma_{\delta,\varepsilon_0}$ is $(P_t^{U_{\varepsilon_0}^\delta})$-excessive, 
which implies (see \cite[Proposition II.2.3]{BG}) that  for any $\alpha>0$
\begin{equation}
\label{eq4.6}
\alpha R^{U_{\varepsilon_0}^\delta}_\alpha (\gamma_{\delta,\varepsilon_0})(x)\le \gamma_{\delta,\varepsilon_0}(x),\quad x\in U_{\varepsilon_0}^\delta.
\end{equation}
Fix $\hat\varepsilon_0\in (\varepsilon_0,\delta)$. Since $U_{\hat \varepsilon_0}^\delta\subset U_{\varepsilon_0}^\delta$, we have
$\tau_{U_{\hat \varepsilon_0}^\delta}\le \tau_{U_{\varepsilon_0}^\delta}$ (see \eqref{eq.hitt}), which combined with \eqref{eq.rdef}, the fact that  $\gamma_{\delta,\varepsilon_0}$
is non-negative, and \eqref{eq4.6} yields
\[
\alpha R^{U_{\hat \varepsilon_0}^\delta}_\alpha (\gamma_{\delta,\varepsilon_0})(x)\le \gamma_{\delta,\varepsilon_0}(x),\quad x\in U_{\varepsilon_0}^\delta.
\]
Letting $\varepsilon_0\searrow 0$ and using Lemma \ref{lm2.2}, and then letting 
$\hat \varepsilon_0\searrow 0$ and using Lemma \ref{lm2.3}  give
\[
\alpha R^{U^\delta}_\alpha\gamma_\delta\le \gamma_\delta,\quad \text{a.e. in } U^\delta.
\]
This in turn, by Lemma \ref{lm2.4}, implies that 
\[
\alpha R^{D^\delta}_\alpha\gamma_\delta\le \gamma_\delta,\quad \text{a.e. in } D^\delta.
\]
Consequently, by \cite[Proposition 2.4]{BCR} there exists a $(P^{D^\delta}_t)$-excessive function $\hat \gamma_\delta(\cdot)$ such that 
$\hat \gamma_\delta(\cdot)= \gamma_\delta(\cdot)$ a.e. Thus, by \eqref{eq4.5}, 
\begin{equation}
\label{eq.now1}
 u=\hat \gamma_\delta+R^{D^\delta}\xi+R^{D^\delta}\nu^{*,D}_{u},\quad \text{a.e. in }D^\delta.
\end{equation}
From this and \cite[Proposition VI.2.3]{BG} we conclude  in particular that $u\in L^1_{loc}(D)$
(this is the part of the assertion in case $\nu\equiv 0$).

{\bf Step 4} (conclusion of the result). By \cite[Theorem VI.2.11]{BG}
\[
\hat\gamma_\delta=h_\delta+R^{D^\delta}\sigma_\delta\quad\text{in }D^\delta
\]
for some positive Borel measure $\sigma_\delta$ and positive $(P^{D^\delta}_t)$-harmonic function $h_\delta$.
Consequently
\begin{equation}
\label{eq.delta}
 u=h_\delta+R^{D^\delta}\sigma_\delta+R^{D^\delta}\xi+R^{D^\delta}\nu^{*,D}_{u},\quad \text{a.e. in }D^\delta.
\end{equation}
Fix  $\eta\in C_c^2(D^\delta)$. Multiplying the above equality by (the bounded continuous function) $L^*\eta$ and applying Proposition \ref{prop.harm} and Lemma \ref{lm.duoper} yields
\[
-\int_{D^\delta}uL^*\eta=\int_{D^\delta}h_\delta L^*\eta+\int_{D^\delta} R^{D^\delta}\sigma_\delta L^*\eta+
\int_{D^\delta} R^{D^\delta}\xi L^*\eta=\int_{D^\delta}\eta\,d\sigma_\delta+\int_{D^\delta}\eta\,d\xi+\int_{D^\delta}\eta\,d\nu^{*,D}_{u}.
\]
Hence
\[
-\int_{D^\delta}uL^*\eta=\int_{D^\delta}\eta\,d\sigma_\delta+\int_{D^\delta}\eta\,d\xi+\int_{D^\delta}\eta\,d\nu^{*,D}_{u},\quad \eta\in C_c^2(D^\delta).
\]
As a result, by Helly's selection theorem (see e.g. \cite[Theorem 5.19]{Kallenberg}),
there exists a positive Radon measure $\sigma$ on $D$ such that, up to a subsequence, $\sigma_\delta\to \sigma$
in the vague topology (i.e. $\int_{D}\eta\,d\sigma_{\delta}\to \int_D\eta\,d\sigma$ for any $\eta\in C_c(D)$), which in turn 
combined with the previous equality yields
\[
-\int_{D} uL^*\eta=\int_{D}\eta\,d\sigma+\int_{D}\eta\,d\xi+\int_{D}\eta\,d\nu^{*,D}_{u}\quad \eta\in C_c^2(D).
\]
Hence and by Remark \ref{rem.imp}
\[
-\int_{\mathbb R^d} uL^*\eta=\int_{D}\eta\,d\sigma+\int_{D}\eta\,d\xi,\quad \eta\in C_c^2(D).
\]
Invoking   equality \eqref{eq1.7}
we conclude  that   $supp[ \sigma]\subset K$. This  completes the proof of 1) with 
$\sigma_K:=\sigma$.

As to 2) let us fix open  $V\subset\subset D$. 
We keep the notation of the proof of 1). 
Let $\delta>0$ be such that $\overline V\subset D^{\delta}$.
Set $N_1:=\{x\in V: R^{D^\delta}|\xi|(x)+R^{D^\delta}\nu^{*,D}_{u}(x)=\infty\}$. By Step 2 of the proof of 1) the set $N_1$ is polar.  
Furthermore, by \cite[Proposition II.3.5]{BG}, $N_2:=\{x\in V: \hat \gamma_\delta(x)=\infty\}$ is polar too.
Letting $\tilde u(x):=\hat \gamma_\delta(x)+R^{D^\delta}\xi(x)+R^{D^\delta}\nu^{*,D}_{u}(x)$ for $x\in V\setminus(N_1\cup N_2)$ and zero otherwise
we get, by \eqref{eq.now1},  that $\tilde u$ is a q.e. finely continuous $\ell^d$-version of $u$ that is finite q.e. in $V$
($R^{D^\delta}\xi^+$, $R^{D^\delta}\xi^-$, $R^{D^\delta}\nu^{*,D}_{u}$, $\hat \gamma_\delta$ are excessive, hence finely continuous). 
This being so for any  open  set $V\subset\subset D$, we easily find a q.e. finely continuous $\ell^d$-version $\tilde u$
of $u$ on $D$ that is q.e. finite in $D$. Thus, by \eqref{eq.delta} and Remark \ref{rem.eqev}, we have 
\begin{equation}
\label{eq.deltat}
 \tilde u=h_\delta+R^{D^\delta}\sigma_\delta+R^{D^\delta}\xi+R^{D^\delta}\nu^{*,D}_{u},\quad \text{q.e. in }D^\delta.
\end{equation}
From this, Dynkin's formula (see \cite[Theorem 12.16]{Sharpe}) and the definition of a harmonic function
(see Remark \ref{rem.harmf}), we conclude that 
\[
\begin{split}
\mathbb E^{D^\delta}_x[\tilde u(X_{\tau_V})]& =\mathbb E_x^{D^\delta}[h_\delta(X_{\tau_V})]+
\mathbb E_x^{D^\delta}[R^{D^\delta}\sigma_\delta(X_{\tau_V})]\\&\quad +\mathbb E_x^{D^\delta}[R^{D^\delta}\xi(X_{\tau_V})] 
+\mathbb E_x^{D^\delta}[R^{D^\delta}\nu^{*,D}_{u}(X_{\tau_V})] 
\\&
=h_\delta(x)+R^{D^\delta}\sigma_\delta(x)-R^{V}\sigma_\delta(x)\\&\quad+R^{D^\delta}\xi(x)-R^{V}\xi(x)+R^{D^\delta}\nu^{*,D}_{u}(x)-R^{V}\nu^{*,D}_{u}(x)
,\quad \text{q.e. in }V.
\end{split}
\]
This equality together  with \eqref{eq.deltat} yield
\begin{equation}
\label{eq.deltat2}
 \tilde u(x)=\mathbb E^{D^\delta}_x[\tilde u(X_{\tau_V})] +R^{V}\sigma_\delta(x)+R^{V}\xi(x)+R^{V}\nu^{*,D}_{u}(x)\quad \text{q.e. in } V.
\end{equation}
By the definition of the measures $\mathbb P^{D^\delta}_x, \mathbb P^{D}_x$ we have
\[
\mathbb E^{D^\delta}_x[\tilde u(X_{\tau_V})] =\mathbb E_x[\tilde u(X_{\tau_V})\mathbf1_{\{\tau_V<\tau_{D^\delta}\}}] 
\to \mathbb E_x[\tilde u(X_{\tau_V})\mathbf1_{\{\tau_V<\tau_{D}\}}]= \mathbb E^{D}_x[\tilde u(X_{\tau_V})].
\]
From this, \eqref{eq.deltat2} and Lemma \ref{lm2.2} we infer that
\[
 \tilde u(x)=\mathbb E^{D}_x[\tilde u(X_{\tau_V})] +R^{V}\sigma(x)+R^{V}\xi(x)+R^{V}\nu^{*,D}_{u}(x)\quad \text{q.e. in } V.
\]
Now, using Lemma \ref{lm.hr1} gives the desired equality. 
\end{proof}

\section{Local behavior of solutions II, maximum principle}

In the following section  we shall prove that the formula appearing  in assertion  2) 
of Theorem \ref{th.main1} holds true for $\kappa_1=0$.
The advantage of this refinement  is that the term $\kappa_1 u$     
that is present on the right-hand side of the said formula vanishes.
The second advantage is a maximum principle that we can obtain,  as a simple corollary,
with a constant that in some instances is
independent of the drift $b$.  The said  refinement requires an additional assumption formulated as
\eqref{eq.exgreen} below that is in fact a necessary condition for having all the integrals in "$\kappa_1=0$ version"
of \eqref{eq.local12} finite.

\subsection{Auxiliary results}

Let $V$ be an  open   subset of $\mathbb R^d$.
As we mentioned in the comment following Hypothesis \ref{eq.stanass}, each $R^{0,V}_\alpha(x,dy)$
is absolutely continuous with respect to the Lebesgue measure on $V$, which in turn implies that 
$R^{0,V}(x,dy):= \lim_{\alpha\to 0^+} R^{0,V}_\alpha(x,dy)$ (monotone limit) shares this  property too.  
By \cite[Theorem V.58, page 52]{DellacherieMeyer1}  for  any $\kappa\ge 0$ 
there exists a positive function $r_{\kappa}(\cdot,\cdot)\in \mathscr B(V)\otimes \mathscr B(V)$ 
such that 
\begin{equation}
\label{eq.rr1}
R^{0,V}_\kappa(x,dy)=r_{\kappa}(x,y)\,dy,\quad x\in V.
\end{equation}
As  mentioned in Section \ref{sec.fprs}
for any $\kappa\ge \kappa_0,\, \alpha\ge 0$  there exists Green's function $G^{\kappa}_{V,\alpha}$
of $A-b\nabla+(\kappa+\alpha) I$ on $V$ (see also \eqref{eq.greenred}). 
We used at that point the result from \cite{BG} that required   the dual operator
$(A-b\nabla)^*+\kappa I$  to generate a Markov semigroup.  
This in turn holds true provided that $\kappa\ge \kappa_0$.  
We shall extend the family $(G^{\kappa}_{V,\alpha}(\cdot,\cdot))$
to $\kappa$ ranging over the set $[0,\infty)$. In order to do this, we define (see \eqref{eq.gvvve} and \eqref{eq.greenred})
\begin{equation}
\label{eq.gfcas}
G^0_{V,\alpha}(x,y):=
\begin{cases}
 G_{V,\alpha-\kappa_0}^{\kappa_0}(x,y),\quad \alpha\ge \kappa_0\\
 G^{\kappa_0}_{V,0}(x,y)+(\kappa_0-\alpha)\int_V r_{\alpha}(x,z)G^{\kappa_0}_{V,0}(z,y)\,dz,\quad \alpha\in[0,\kappa_0).
\end{cases}
\end{equation}

\begin{proposition}
\label{prop.4c}
The family of functions defined by \eqref{eq.gfcas}  satisfies:
\begin{enumerate}
\item[(i)] for any $x\in V,\, \alpha\ge 0$, $G^{0}_{V,\alpha}(x,\cdot)$ is excessive with respect to $(P^{\kappa_0\vee \alpha,*,V}_t)$;
\item[(ii)] for any $y\in V,\, \alpha\ge 0$, $G^{0}_{V,\alpha}(\cdot,y)$ is $\alpha$-excessive with respect to $(P^{0,V}_t)$;
\item[(iii)] for any $x\in V,\, \alpha\ge 0$, $R^{0,V}_\alpha(x,dy)=G^{0}_{V,\alpha}(x,y)\,dy$;
\item[(iv)] for any $x,y\in V$,  $\alpha,\beta\ge 0$, 
\begin{equation}
\label{eq.4G}
\begin{split}
G^{0}_{V,\alpha}(x,y)&=G^{0}_{V,\beta}(x,y)+(\beta-\alpha)\int_VG^{0}_{V,\alpha}(x,z)G^{0}_{V,\beta}(z,y)\,dz
\\&=G^{0}_{V,\beta}(x,y)+(\beta-\alpha)\int_VG^{0}_{V,\beta}(x,z)G^{0}_{V,\alpha}(z,y)\,dz.
\end{split}
\end{equation}
\end{enumerate}
\end{proposition}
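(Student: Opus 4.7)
The plan is to verify the four assertions by exploiting the piecewise definition \eqref{eq.gfcas} and splitting on whether $\alpha\ge\kappa_0$ or $\alpha\in[0,\kappa_0)$. In the super-critical range $\alpha\ge\kappa_0$ the function $G^{0}_{V,\alpha}=G^{\kappa_0}_{V,\alpha-\kappa_0}$ is the Green function already furnished by BG for the dual pair $(P^{\kappa_0,V}_t,P^{\kappa_0,*,V}_t)$, so all four items reduce to restatements of the $(\alpha-\kappa_0)$-excessiveness provided by \cite[Theorem VI.1.4]{BG}, once it is translated through the exponential identities $P^{\alpha,V}_t=e^{-(\alpha-\kappa_0)t}P^{\kappa_0,V}_t$ and $P^{\alpha,*,V}_t=e^{-(\alpha-\kappa_0)t}P^{\kappa_0,*,V}_t$ from \eqref{eq.red2}, turning it into $\alpha$-excessiveness w.r.t.\ $(P^{0,V}_t)$ for (ii) and into plain excessiveness w.r.t.\ $(P^{\alpha,*,V}_t)$ for (i); item (iv) is the standard resolvent equation for $(R^{\kappa_0,V}_\gamma)_{\gamma\ge 0}$. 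The substance of the proposition therefore lies in the sub-critical range $\alpha\in[0,\kappa_0)$.

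For that range I would first establish (iii). Apply the resolvent identity
\[
R^{0,V}_\alpha=R^{0,V}_{\kappa_0}+(\kappa_0-\alpha)R^{0,V}_\alpha R^{0,V}_{\kappa_0}
\]
to an arbitrary $f\in p\mathscr B(V)$, substitute the density $r_\alpha$ of $R^{0,V}_\alpha(x,dy)$ from \eqref{eq.rr1} and the Green density $G^{\kappa_0}_{V,0}$ of $R^{\kappa_0,V}=R^{0,V}_{\kappa_0}$ supplied by BG, and swap the order of integration by Fubini; the resulting density of $R^{0,V}_\alpha(x,dy)$ is precisely the right-hand side of \eqref{eq.gfcas}, which is (iii). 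Item (i) is then immediate from \eqref{eq.gfcas}: $G^{\kappa_0}_{V,0}(x,\cdot)$ is excessive w.r.t.\ $(P^{\kappa_0,*,V}_t)$ by standard BG, and the integral $(\kappa_0-\alpha)\int_V r_\alpha(x,z)G^{\kappa_0}_{V,0}(z,\cdot)\,dz$ is a positive superposition of such excessive functions against the kernel $r_\alpha(x,\cdot)\,dz$, hence excessive by \cite[Proposition II.2.8]{BG}. For (iv) I would apply both symmetric forms of the resolvent identity
\[
R^{0,V}_\alpha-R^{0,V}_\beta=(\beta-\alpha)R^{0,V}_\alpha R^{0,V}_\beta=(\beta-\alpha)R^{0,V}_\beta R^{0,V}_\alpha
\]
to $f\in p\mathscr B(V)$, use (iii) and Fubini to match kernels a.e.\ in $y$ for fixed $x$ (and a.e.\ in $x$ for fixed $y$), and then upgrade to pointwise equality via (i) and (ii): both members of each identity are excessive, hence finely (respectively co-finely) continuous, in a common fine topology (independent of the killing parameter by Section 2.3), so \eqref{eq.ppp1} promotes a.e.\ equality to equality everywhere.

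The main obstacle is (ii) for $\alpha\in[0,\kappa_0)$, because for such $\alpha$ no companion Markov semigroup $(P^{\alpha,*,V}_t)$ exists and BG's dual construction is unavailable. My approach will be algebraic. Set $g:=G^{\kappa_0}_{V,0}(\cdot,y)$, so that $u:=G^{0}_{V,\alpha}(\cdot,y)=g+(\kappa_0-\alpha)R^{0,V}_\alpha g$, and invoke the resolvent characterization of $\alpha$-excessiveness w.r.t.\ $(P^{0,V}_t)$: one must check $\beta R^{0,V}_{\alpha+\beta}u\le u$ for every $\beta>0$ together with $\beta R^{0,V}_{\alpha+\beta}u\to u$ as $\beta\to\infty$. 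The identity $\beta R^{0,V}_{\alpha+\beta}R^{0,V}_\alpha=R^{0,V}_\alpha-R^{0,V}_{\alpha+\beta}$ produces
\[
\beta R^{0,V}_{\alpha+\beta}u=(\beta+\alpha-\kappa_0)R^{0,V}_{\alpha+\beta}g+(\kappa_0-\alpha)R^{0,V}_\alpha g,
\]
so the inequality $\beta R^{0,V}_{\alpha+\beta}u\le u$ collapses to $(\beta+\alpha-\kappa_0)R^{0,V}_{\alpha+\beta}g\le g$. A sign split disposes of this: when $\beta+\alpha-\kappa_0\le 0$ the left-hand side is non-positive and the bound is trivial, while when $\beta+\alpha-\kappa_0>0$ the identity $R^{0,V}_{\alpha+\beta}=R^{\kappa_0,V}_{\beta+\alpha-\kappa_0}$ rewrites it as $\beta'R^{\kappa_0,V}_{\beta'}g\le g$ with $\beta':=\beta+\alpha-\kappa_0>0$, which is the $0$-excessiveness of $g$ w.r.t.\ $(P^{\kappa_0,V}_t)$ provided by BG. The convergence $\beta R^{0,V}_{\alpha+\beta}u\to u$ as $\beta\to\infty$ follows from the same rewriting together with $\beta'R^{\kappa_0,V}_{\beta'}g\to g$. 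It is this sign-based case split that rescues the sub-critical regime in which the standard dual theory breaks down.
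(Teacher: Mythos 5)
Your proposal is correct, and for items (i), (iii) and the a.e.-to-everywhere upgrade in (iv) it follows essentially the same route as the paper: the resolvent identity applied to the densities $r_\alpha$ from \eqref{eq.rr1}, the identification $r_\alpha(x,\cdot)=G^{\kappa_0}_{V,\alpha-\kappa_0}(x,\cdot)$ for $\alpha\ge\kappa_0$, the superposition argument for co-excessiveness in the subcritical range, and \eqref{eq.ppp1} in the co-fine topology to promote the a.e. kernel identity to a pointwise one. The only genuine divergence is in (ii) for $\alpha\in[0,\kappa_0)$, and there the two arguments are organized differently. The paper first proves (iv), reads off the supermedian inequality $\beta R^{0,V}_{\alpha+\beta}\bigl(G^0_{V,\alpha}(\cdot,y)\bigr)\le G^0_{V,\alpha}(\cdot,y)$ from \eqref{eq.4G}, deduces fine continuity of $G^0_{V,\alpha}(\cdot,y)$ from the second identity in \eqref{eq.4G} with $\beta=\kappa_0$, and then invokes the fact that a finely continuous $\alpha$-supermedian function is $\alpha$-excessive (\cite[Exercise 4.23]{Sharpe}). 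You instead work directly with $u=g+(\kappa_0-\alpha)R^{0,V}_\alpha g$, reduce both the supermedian inequality and the monotone convergence $\beta R^{0,V}_{\alpha+\beta}u\uparrow u$ to the $0$-excessiveness of $g=G^{\kappa_0}_{V,0}(\cdot,y)$ with respect to $(P^{\kappa_0,V}_t)$ via the sign split on $\beta+\alpha-\kappa_0$, and conclude by the resolvent characterization of excessiveness. This keeps the dependency order (iii)$\to$(i)$\to$(ii)$\to$(iv) consistent (your (ii) does not use (iv), so there is no circularity), trades the appeal to Sharpe's exercise for the equally standard resolvent criterion of \cite[Chapter II]{BG}, and is if anything slightly more self-contained. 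The only points worth tightening are the usual $\infty-\infty$ caveats when subtracting $R^{0,V}_{\alpha+\beta}g$ from $R^{0,V}_\alpha g$ (harmless here, since for $\beta$ large both sides of the supermedian inequality are infinite wherever $u=\infty$), and the observation that in (iv) the matching a.e. in $x$ for fixed $y$ is superfluous, since (iii) already gives the identity for every $x$.
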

\begin{proof}
From  the resolvent identity and \eqref{eq.rr1}, we conclude that for any $\alpha,\beta\ge 0$  and any  $x\in V$,
\begin{equation}
\label{eq.GG}
\begin{split}
r_\alpha(x,y)&= r_\beta(x,y)+(\beta-\alpha)\int_V r_\alpha(x,z)r_\beta(z,y)\,dz\\&
= r_\beta(x,y)+(\beta-\alpha)\int_V r_\beta(x,z)r_\alpha(z,y)\,dz,\quad y\in V,\, \text{a.e.}
\end{split}
\end{equation} 
When $\alpha\ge \kappa_0$, we clearly have
$r_{\alpha}(x,\cdot)=G^{\kappa_0}_{V,\alpha-\kappa_0}(x,\cdot)$ a.e.  (see \eqref{eq.rr1} and \eqref{eq.red2}).
This combined with \eqref{eq.rr1}, \eqref{eq.gfcas} and \eqref{eq.GG} gives (iii) that may be restated
as follows
\begin{equation}
\label{eq.rrr1}
r_{\alpha}(x,\cdot)=G^{0}_{V,\alpha}(x,\cdot) \quad \text{a.e. in }V\text{ for any }x\in V,\, \alpha\ge 0.
\end{equation} 
As to (i), note  that each $G^{\kappa_0}_{V,\alpha}(x,\cdot)$ is  excessive 
with respect to $(P^{\alpha+\kappa_0,*,V}_t)$,  by the very definition of the Green function, see \eqref{eq.GGG}.
Observe also that by \eqref{eq.gfcas} for any $\alpha\in [0,\kappa_0)$, $ G_{V,\alpha}^0(x,\cdot)$ is 
excessive with respect to  $(P^{\kappa_0,*,V}_t)$ (due to excessiveness   of  
$G^{\kappa_0}_{V,0}(x,\cdot)$ with respect to $(P^{\kappa_0,*,V}_t)$). This establishes (i).

(iv) By (i), we have in particular that  $G_{\alpha,V}^0(x,\cdot)$ is co-finely continuous for any $x\in V$
and $\alpha\ge 0$. From this, \eqref{eq.rrr1}, \eqref{eq.GG} and \eqref{eq.ppp1} (for co-fine topology) one easily concludes (iv).

(ii)  For any  $y\in V$ and  $\alpha\ge \kappa_0$,  $G^0_{V,\alpha}(\cdot,y)$
is $\alpha$-excessive with respect to $(P^{0,V}_t)$  
(again by the very definition of the Green function $G^{\kappa_0}_{V,\alpha}$).
What is left is to show that (ii) holds true for any $\alpha\in [0,\kappa_0)$.
By \eqref{eq.4G}
\[
\begin{split}
\beta R^{0,V}_{\alpha+\beta}& (G^0_{V,\alpha}(\cdot,y))(x)+G^0_{V,\beta+\alpha}(x,y)\\&\quad=\beta\int_V G^0_{V,\beta+\alpha}(x,z)G^0_{V,\alpha}(z,y)\,dz+G^0_{V,\beta+\alpha}(x,y)
=G^0_{V,\alpha}(x,y),\quad x,y\in V.
\end{split}
\]
As a result, $\beta R^{0,V}_{\alpha+\beta} (G^0_{V,\alpha}(\cdot,y))(x)\le  G^0_{V,\alpha}(x,y)$
for any $\alpha,\beta\ge 0$, $x,y\in V$. By the second equality in \eqref{eq.4G}, with $\beta=\kappa_0$,
 $G^0_{V,\alpha}(\cdot,y)$ is $\kappa_0$-excessive with respect to $(P^{0,V}_t)$, hence finely continuous on $V$
 for any $\alpha\in [0,\kappa_0)$.
 Consequently,  by \cite[Exercise 4.23]{Sharpe}, for any $\alpha\in [0,\kappa_0)$, $G^0_{V,\alpha}(\cdot,y)$ is $\alpha$-excessive with respect to $(P^{0,V}_t)$.
\end{proof}

Thanks to the above result we may define  function  
\begin{equation}
\label{eq.extmes}
R^{0,V}_\alpha\mu(x):= \int_VG^{0}_{V,\alpha}(x,y)\,\mu(dy),\quad x\in V
\end{equation}
for any $\alpha\ge 0$  and positive Borel measure $\mu$ on $\mathbb R^d$ 
(clearly $R^{0,V}_\alpha\mu=R^{\alpha,V}\mu$ for $\alpha\ge \kappa_0$,  see \eqref{dual.res2}).

\begin{lemma}
\label{lm.id1}
Let $V\subset\mathbb R^d$ be an open  set, and $u$ be a positive Borel measurable function on $\mathbb R^d$.
Then for any $0\le \alpha<\kappa$,
\[
(\kappa-\alpha)(R^{0,V}_\alpha(\mathbb E^\kappa_\cdot u(X_{\tau_V})))(x)+\mathbb E^\kappa_x u(X_{\tau_V})=\mathbb E^{0}_xe^{-\alpha\tau_V}u(X_{\tau_V}),\quad x\in V.
\]
\end{lemma}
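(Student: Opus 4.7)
The plan is to reduce the left-hand side to a single expectation under $\mathbb P^0_x$ and then evaluate an elementary time integral. First, using the killing-reduction formula \eqref{eq2.killex}, I would rewrite
\[
h(y):=\mathbb E^\kappa_y u(X_{\tau_V})=\mathbb E^0_y\bigl[e^{-\kappa\tau_V}u(X_{\tau_V})\bigr].
\]
Applying the Markov property at time $t$, and noting that on $\{t<\tau_V\}$ the remaining exit time from $V$ is $\tau_V-t$, this gives
\[
h(X_t)=\mathbb E^0_x\bigl[e^{-\kappa(\tau_V-t)}u(X_{\tau_V})\,\big|\,\mathcal F_t\bigr]\quad\text{on }\{t<\tau_V\}.
\]

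Next, I would plug this into the probabilistic definition \eqref{eq.rdef} of $R^{0,V}_\alpha$; by the tower property combined with Tonelli's theorem (both legitimate because $u\ge 0$), this yields
\[
R^{0,V}_\alpha h(x)=\mathbb E^0_x\int_0^{\tau_V}e^{-\alpha t}e^{-\kappa(\tau_V-t)}u(X_{\tau_V})\,dt=\mathbb E^0_x\!\left[u(X_{\tau_V})e^{-\kappa\tau_V}\int_0^{\tau_V}e^{(\kappa-\alpha)t}\,dt\right].
\]
Since $\kappa>\alpha$, the inner integral equals $(e^{(\kappa-\alpha)\tau_V}-1)/(\kappa-\alpha)$, and multiplying through by $e^{-\kappa\tau_V}$ and by $\kappa-\alpha$ produces
\[
(\kappa-\alpha)R^{0,V}_\alpha h(x)=\mathbb E^0_x\bigl[e^{-\alpha\tau_V}u(X_{\tau_V})\bigr]-\mathbb E^0_x\bigl[e^{-\kappa\tau_V}u(X_{\tau_V})\bigr].
\]
A second application of \eqref{eq2.killex} identifies the subtracted term as $\mathbb E^\kappa_x u(X_{\tau_V})$, and rearranging delivers the claimed equality.

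I do not foresee a serious obstacle: the result is essentially the resolvent identity written at the level of path functionals of the process killed upon exiting $V$. The only delicate points are the legitimacy of the Markov shift for $\tau_V$, which is standard for the Hunt processes constructed in Section \ref{sec.fprs}, and the fact that throughout I would use the probabilistic definition \eqref{eq.rdef} of $R^{0,V}_\alpha$ (valid for every $\alpha\ge 0$), so no appeal to the analytic extension \eqref{eq.extmes} from Proposition \ref{prop.4c} is needed when $\alpha\in[0,\kappa_0)$.
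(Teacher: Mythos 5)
Your proposal is correct and follows essentially the same route as the paper's proof: rewrite $\mathbb E^\kappa_\cdot u(X_{\tau_V})$ via \eqref{eq2.killex}, apply the Markov property inside the probabilistic formula \eqref{eq.rdef} for $R^{0,V}_\alpha$, evaluate $\int_0^{\tau_V}e^{(\kappa-\alpha)t}\,dt$, and invoke \eqref{eq2.killex} once more. The only cosmetic difference is that the paper first reduces to bounded $u$ by monotone convergence, whereas you justify the interchange directly by Tonelli for non-negative $u$; both are fine.
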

\begin{proof}
By making use of  the Lebesgue monotone convergence theorem we may and will assume 
without loss of generality that $u$ is bounded. By \eqref{eq2.killex} and strong Markov property one has
\[
\begin{split}
R^{0,V}_\alpha(\mathbb E^\kappa_\cdot u(X_{\tau_V}))(x)&=R^{0,V}_\alpha(\mathbb E^{0}_\cdot [e^{-\kappa \tau_V}u(X_{\tau_V})])(x)\\&=
\mathbb E^0_x\int_0^{\tau_V}e^{-\alpha s}\mathbb E^0_{X_s}\left[e^{-\kappa\tau_V}u(X_{\tau_V})\right]\,ds
\\&=\mathbb E^0_x\int_0^{\tau_V}e^{-\alpha s}\left[e^{-\kappa(\tau_V-s)}u(X_{\tau_V})\right]\,ds\\&=
\mathbb E^{0}_x\left[e^{-\kappa\tau_V}u(X_{\tau_V}) \int_0^{\tau_V}e^{(\kappa-\alpha) s}\,ds\right]
\\&=\frac{1}{\kappa-\alpha}\mathbb E^{0}_x\left[e^{-\kappa\tau_V}u(X_{\tau_V})(e^{(\kappa-\alpha)\tau_V}-1)\right].
\end{split}
\]
From this and \eqref{eq2.killex} again,  we infer the asserted equality.
\end{proof}

Now, thanks to \eqref{eq.extmes}, we may extend  Lemmas \ref{lm2.3}, \ref{lm2.4}
to the case $\kappa_1=0$.

\begin{lemma}
\label{lm2.3zero}
Let  $(V_n)$ be an increasing sequence of  open subsets of $\mathbb R^d$ and let 
$V:=\bigcup_{n\ge 1} V_n $. Let $\beta$ be a positive Borel measure on $V$. Then
\[
R^{0,V_n}\beta(x)\nearrow R^{0,V}\beta (x),\quad x\in V.
\]
\end{lemma}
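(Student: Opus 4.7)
My plan is to reduce the $\kappa=0$ case to the case $\kappa=\kappa_1>\kappa_0$ already handled by Lemma~\ref{lm2.3}, using the resolvent-type identity provided by Proposition~\ref{prop.4c}(iv). Fix any $\kappa_1>\kappa_0$. The first equality in \eqref{eq.4G}, applied with the two parameters there equal to $0$ and $\kappa_1$, together with $G^0_{W,\kappa_1}=G^{\kappa_1}_W$ (Section~\ref{sec4aux} notation), reads, for every open $W\subset\mathbb R^d$ and all $x,y\in W$,
\[
G^0_{W,0}(x,y)=G^{\kappa_1}_W(x,y)+\kappa_1\int_W G^0_{W,0}(x,z)\,G^{\kappa_1}_W(z,y)\,dz.
\]
Integrating against $\beta(dy)$, which is legitimate by Tonelli and the positivity of all kernels, yields
\[
R^{0,W}\beta(x)=R^{\kappa_1,W}\beta(x)+\kappa_1\int_W G^0_{W,0}(x,z)\,R^{\kappa_1,W}\beta(z)\,dz,\qquad x\in W.
\]

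The next step is to rewrite the last integral probabilistically. By Proposition~\ref{prop.4c}(iii) at $\alpha=0$ we have $R^{0,W}(x,dy)=G^0_{W,0}(x,y)\,dy$, while \eqref{eq.pdef} at $\kappa=0$ gives $P^{0,W}_tf(x)=\mathbb E^0_x[f(X_t)\mathbf 1_{\{t<\tau_W\}}]$. Combining these (Tonelli) produces, for every positive Borel function $f$ on $W$,
\[
\int_W G^0_{W,0}(x,z)\,f(z)\,dz=\mathbb E^0_x\int_0^{\tau_W}f(X_t)\,dt.
\]
Applied to $f=R^{\kappa_1,W}\beta$ and $W\in\{V_n,V\}$, the preceding identity becomes
\[
R^{0,V_n}\beta(x)=R^{\kappa_1,V_n}\beta(x)+\kappa_1\,\mathbb E^0_x\int_0^{\tau_{V_n}}R^{\kappa_1,V_n}\beta(X_t)\,dt,
\]
and the analogous formula with $V_n$ replaced by $V$.

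From here both the monotonicity in $n$ and the pointwise convergence are a double application of monotone convergence. Lemma~\ref{lm2.3} gives $R^{\kappa_1,V_n}\beta\nearrow R^{\kappa_1,V}\beta$ pointwise on $V$, and as observed in the proof of Lemma~\ref{lm2.3.1}, $\tau_{V_n}\nearrow\tau_V$ $\mathbb P^0_x$-a.s. Adopting the standing convention $R^{\kappa_1,V_n}\beta\equiv 0$ off $V_n$, the random functions $\mathbf 1_{[0,\tau_{V_n})}(t)\,R^{\kappa_1,V_n}\beta(X_t)$ are nondecreasing in $n$ and converge pointwise in $(t,\omega)$ to $\mathbf 1_{[0,\tau_V)}(t)\,R^{\kappa_1,V}\beta(X_t)$. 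Monotone convergence applied first inside the $dt$-integral and then under $\mathbb E^0_x$ upgrades this to convergence of the expectation terms; combined with the monotone convergence of $R^{\kappa_1,V_n}\beta(x)$, this yields $R^{0,V_n}\beta(x)\nearrow R^{0,V}\beta(x)$ for every $x\in V$. I expect no essential obstacle: the entire argument rests on the $\kappa=0$ extension of the Green function machinery packaged in Proposition~\ref{prop.4c}; once that is in hand, the present lemma reduces to a routine monotone-convergence exercise.
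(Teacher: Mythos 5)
Your proposal is correct and follows essentially the same route as the paper: both reduce the $\kappa=0$ case to Lemma~\ref{lm2.3} via the resolvent identity $R^{0,W}\beta=R^{\kappa_1,W}\beta+\kappa_1 R^{0,W}\bigl(R^{\kappa_1,W}\beta\bigr)$ and then pass to the limit in the iterated term. The only (cosmetic) difference is in how that iterated term's limit is handled: you invoke the probabilistic representation $R^{0,W}f(x)=\mathbb E^0_x\int_0^{\tau_W}f(X_t)\,dt$ and apply monotone convergence twice using $\tau_{V_n}\nearrow\tau_V$, whereas the paper sandwiches the increasing limit $g$ of $R^{0,V_n}f_n$ between $R^{0,V_n}f_m$ and $R^{0,V}f$ via a diagonal argument — both rest on the same monotonicity facts.
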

\begin{proof}
Let $\kappa\ge \kappa_0$. By the resolvent identity,
\[
R^{0,V_n}\beta(x)=R^{\kappa,V_n}\beta(x)+\kappa R^{0,V_n}R^{\kappa,V_n}\beta(x),\quad x\in V_n.
\]
By Lemma \ref{lm2.3} $f_n(x):=R^{\kappa,V_n}\beta(x)\nearrow R^{\kappa,V}\beta(x)=:f(x),\, x\in V$.
Hence, there exists $g\in p\mathscr B(V)$ such that 
\[
R^{0,V_n}f_n\nearrow g,\quad \text{in }V.
\]
Clearly, $g\le R^{0,V}f$ in $V$. On the other hand, for $n\ge m$
\[
R^{0,V_n}f_n\ge R^{0,V_n}f_m\nearrow R^{0,V}f_m.
\]
Letting $m\to \infty$ yields $g\ge R^{0,V}f$. Thus, $g= R^{0,V}f$. As a result
\[
R^{0,V_n}\beta(x)\nearrow R^{\kappa,V}\beta(x)+\kappa R^{0,V}R^{\kappa,V}\beta(x)=R^{0,V}\beta(x),\quad x\in V.
\]
This completes the proof.
\end{proof}

Clearly, in order    to have the integrals in \eqref{eq.local12} well defined with $\kappa_1=0$,
we need $G_{V,0}^0$ to be finite a.e., or in other words
that
\[
\lim_{\alpha\to 0^+} G_{\alpha,V}^0(x,y)=\sup_{\alpha>0} G_{\alpha,V}^0(x,y)<\infty \text{ for a.e. }(x,y)\in V\times V
\]
 The last property is equivalent to (see \cite[Theorem 2, Section 3.7]{Chung})
 \begin{equation}
 \label{eq.exgreen}
 \mathbb P^0_x(\tau_V<\infty)=1,\quad x\in V.
 \end{equation}
 The above condition holds whenever $\nu(B^c(0,r))>0$ for any $r\ge 1$ (see  \cite[Proposition 3.7]{SW1}),
 thus it holds true for operators from Examples \ref{ex11}--\ref{ex13}.
 For other conditions based on the  Fourier symbol see e.g.  \cite[Theorem 5.5]{BSW}.
 From the  result mentioned  we may deduce that \eqref{eq.exgreen} holds for the operators in Example \ref{ex14}.
By \eqref{eq.exgreen} and \cite[Theorem 4, Section 3.6]{Chung} for any finite
$(P^{0,V}_t)$-excessive function $f$, we have $P^{0,V}_t f(x)\to 0,\, t\to \infty$ for $x\in V$
such that $f(x)<\infty$,
which implies that 
 \begin{equation}
 \label{eq.exgreent}
\lim_{\alpha\to 0}\alpha R^{0,V}_\alpha f(x)=0,\quad x\in V,\, f(x)<\infty.
 \end{equation}
 We will use this property in the proof of the  main theorem of the section.
 
\begin{theorem}
\label{th.main2}
Assume that  for any open  $V\subset\subset D$  \eqref{eq.exgreen} holds  and $R^{0,V}\lambda^+\in L^1(V)$.
Suppose that $u\in \mathscr T_\nu(D)\cap L^1_{loc}(\mathbb R^d)$ (or $u\in L^1_{loc}(D\setminus K)$ in case $\nu\equiv 0$)
is a non-negative  distributional solution to \eqref{eq1.1}.  
Let $ \mu_0$ be as in Theorem \ref{th.main1} and 
\[
U:=\bigcup_{V\subset\subset D} \{x\in V: R^{0,V}\lambda^+(x)<\infty\},\quad N:=D\setminus U.
\]
Then $N$ is polar,  $u$ has an $\ell^d$-version $\hat u$ that is finely continuous on 
$D\setminus N$ and  letting $\hat u(x):= 0,\, x\in N$, we have that for any open $V\subset\subset D$,
\begin{equation}
\label{eq.main3}
\hat u(x)+R^{0,V}\lambda^+(x)=\mathbb E^{0}_x \hat u(X_{\tau_V})+R^{0,V}\mu_0(x)+R^{0,V}\sigma_K(x)+R^{0,V}\lambda^-(x) \quad x\in V,
\end{equation}
and 
\begin{equation}
\label{eq.main4}
\hat u(x)+R^{0,V}\lambda^+(x)\ge \inf_{y\in D\setminus V} \hat u(y) \mathbb P^0_x (\tau_V<\tau_D),\quad x\in V.
\end{equation}
If for any open $V\subset\subset D$ there exists $\kappa\ge 0 $ such that 
$R^{0,V}_{\kappa}\lambda^+(x)<\infty,\, x\in V$, 
then $u$ has an $\ell^d$-version $\hat u$ that is finely continuous in $D$.
\end{theorem}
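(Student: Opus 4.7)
The plan is to upgrade the identity \eqref{eq.local12} of Theorem \ref{th.main1} (valid for some $\kappa_1>\kappa_0$) to the refined identity \eqref{eq.main3} with $\kappa_1=0$; the remaining assertions will then follow from this refinement. Fix an open $V\subset\subset D$ and $\kappa_1>\kappa_0$, and abbreviate $f:=\mathbb{E}^{\kappa_1}_\cdot\tilde u(X_{\tau_V})$ and $\mu:=\mu_0+\sigma_K+\lambda^-$. Lemma \ref{lm.id1} with $\alpha=0,\kappa=\kappa_1$ gives $f=\mathbb{E}^0_\cdot\tilde u(X_{\tau_V})-\kappa_1 R^{0,V}f$, while Proposition \ref{prop.4c}(iv) with $\alpha=0,\beta=\kappa_1$, integrated against any positive measure $\rho$, gives $R^{\kappa_1,V}\rho=R^{0,V}\rho-\kappa_1 R^{0,V}(R^{\kappa_1,V}\rho)$. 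Substituting both expansions into \eqref{eq.local12} and collecting the $\kappa_1$-proportional remainder $E$ yields
\[
E=\kappa_1 R^{0,V}\bigl[R^{\kappa_1,V}\lambda^+-f-R^{\kappa_1,V}\mu\bigr]+\kappa_1 R^{\kappa_1,V}u.
\]
A second use of \eqref{eq.local12} identifies the bracket as $-\tilde u+\kappa_1 R^{\kappa_1,V}u$; the resolvent identity $R^{0,V}u=R^{\kappa_1,V}u+\kappa_1 R^{0,V}(R^{\kappa_1,V}u)$ then compresses $E$ to $\kappa_1 R^{0,V}(u-\tilde u)$, which vanishes since $\tilde u=u$ $\ell^d$-a.e.\ and $R^{0,V}$ is an integral operator against $\ell^d$. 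The result is \eqref{eq.main3}. I expect this to be the main obstacle: one must verify that every substitution is performed with q.e.\ finite quantities so that no $\infty-\infty$ cancellation occurs, which follows from Theorem \ref{th.main1} (q.e.\ finiteness of $\tilde u$ and of each right-hand summand of \eqref{eq.local12}) combined with the observation that $\kappa_1 R^{0,V}(R^{\kappa_1,V}\rho)=R^{0,V}\rho-R^{\kappa_1,V}\rho$ is q.e.\ finite whenever $R^{0,V}\rho$ is, as a difference of q.e.\ finite potentials.

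Once \eqref{eq.main3} is in hand, polarity of $N$ and fine continuity of $\hat u$ are immediate. $R^{0,V}\lambda^+$ is the potential of a positive measure, hence $(P^{0,V}_t)$-excessive by Proposition \ref{prop.4c}(ii); the integrability assumption makes it finite $\ell^d$-a.e., so \cite[Proposition II.3.5]{BG} confines its infinities to a polar subset of $V$. Covering $D$ by countably many $V\subset\subset D$ shows that $N$ itself is polar. For $x\in V\setminus N$ define $\hat u(x)$ to be the right-hand side of \eqref{eq.main3} minus $R^{0,V}\lambda^+(x)$: each summand is either a potential of a positive measure or $\mathbb{E}^0_\cdot\tilde u(X_{\tau_V})$ (finely continuous by \cite[Proposition 12.15]{Sharpe}), and each is q.e.\ finite there, so $\hat u$ is finely continuous on $V\setminus N$. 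Consistency between different choices of $V$ containing $x$ follows from $\hat u=\tilde u$ $\ell^d$-a.e.\ together with Remark \ref{rem.eqev}, giving a globally defined finely continuous $\ell^d$-version on $D\setminus N$.

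The maximum principle \eqref{eq.main4} drops out of \eqref{eq.main3}: non-negativity of $R^{0,V}\mu_0$, $R^{0,V}\sigma_K$, $R^{0,V}\lambda^-$ gives
\[
\hat u+R^{0,V}\lambda^+\ge\mathbb{E}^0_\cdot\hat u(X_{\tau_V}),
\]
and splitting the expectation on $\{\tau_V<\tau_D\}$ (where $X_{\tau_V}\in D\setminus V$ and hence $\hat u(X_{\tau_V})\ge\inf_{D\setminus V}\hat u$) versus its complement (where $\hat u(X_{\tau_V})\ge 0$) yields \eqref{eq.main4}. Finally, under the stronger hypothesis that $R^{0,V}_\kappa\lambda^+(x)<\infty$ for every $x\in V$ (and some $\kappa\ge 0$), I would repeat the manipulation of the first paragraph with Lemma \ref{lm.id1} applied at $\alpha=\kappa$ and Proposition \ref{prop.4c}(iv) at $\alpha=\kappa,\beta=\kappa_1$; by the same cancellation mechanism one obtains an identity in which $R^{0,V}\lambda^+$ is replaced by the pointwise-finite $R^{0,V}_\kappa\lambda^+$, at the cost of an additional non-negative summand $\kappa R^{0,V}_\kappa u$. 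All summands on the new right-hand side are $\kappa$-excessive and hence finely continuous, so subtracting the pointwise-finite $R^{0,V}_\kappa\lambda^+$ produces a finely continuous pointwise-defined $\ell^d$-version of $u$ on $V$; covering $D$ gives the conclusion on all of $D$.
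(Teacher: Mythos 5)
Your argument for the polarity of $N$, for \eqref{eq.main4}, and for patching the finely continuous version together over a countable family of sets $V$ matches the paper. The core of the theorem, however, is the passage from \eqref{eq.local12} to \eqref{eq.main3}, and there your route has a genuine gap. After substituting Lemma \ref{lm.id1} and Proposition \ref{prop.4c}(iv) into \eqref{eq.local12} (equivalently, after applying $\kappa_1 R^{0,V}$ to \eqref{eq.local12} and adding the result to \eqref{eq.local12}), what you actually obtain is the identity in $[0,\infty]$
\begin{equation*}
\tilde u+\kappa_1 R^{0,V}\tilde u+R^{0,V}\lambda^+=\mathbb E^{0}_\cdot\tilde u(X_{\tau_V})+R^{0,V}\mu_0+R^{0,V}\sigma_K+R^{0,V}\lambda^-+\kappa_1 R^{0,V}u .
\end{equation*}
To cancel $\kappa_1 R^{0,V}\tilde u$ against $\kappa_1 R^{0,V}u$ you need $R^{0,V}u(x)<\infty$ for q.e.\ $x\in V$; at any point where this potential is infinite the identity reads $\infty=\infty$ and yields nothing. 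This finiteness is neither a hypothesis nor a consequence of Theorem \ref{th.main1}: the hypotheses give q.e.\ finiteness of $R^{0,V}\lambda^+$ and of $\kappa_1 R^{\kappa_1,V}u$, but \eqref{eq.exgreen} only guarantees that $G^0_{V,0}$ is finite a.e., not that $u\in L^1(V)$ is integrable against it. Your closing remark that the remainder is "q.e.\ finite whenever $R^{0,V}\rho$ is" is circular here, since q.e.\ finiteness of $R^{0,V}u$ (and of $R^{0,V}\mu_0$, $R^{0,V}\sigma_K$, $R^{0,V}\lambda^-$) is part of what the theorem must deliver.

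The paper's proof is structured precisely to avoid the term $R^{0,V}u$. It first treats the case $\lambda^+\equiv 0$, observes from \eqref{eq.point1} that $\hat u\ge\kappa R^{0,V}_\kappa\hat u$ so that $\hat u$ is $(P^{0,V}_t)$-excessive, applies $R^{0,V}_\alpha$ with $\alpha>0$ to \eqref{eq.point1}, and after the resolvent algebra arrives at
\begin{equation*}
\hat u-\alpha R^{0,V}_\alpha\hat u=\mathbb E^0_\cdot\hat u(X_{\tau_V})+R^{0,V}_\alpha\mu_0+R^{0,V}_\alpha\sigma_K+R^{0,V}_\alpha\lambda^- .
\end{equation*}
Here $\alpha R^{0,V}_\alpha\hat u\le\hat u<\infty$ by excessiveness, so no infinite term ever has to be subtracted; letting $\alpha\to 0^+$ and invoking \eqref{eq.exgreent} (the transience consequence of \eqref{eq.exgreen}, which your argument never uses) kills $\alpha R^{0,V}_\alpha\hat u$ and simultaneously \emph{proves} that $R^{0,V}\mu_0$, $R^{0,V}\sigma_K$, $R^{0,V}\lambda^-$ are finite wherever $\hat u$ is. The general case is then reduced to $\lambda^+\equiv 0$ by adding the potential $w_\varepsilon=R^{0,D_\varepsilon}\lambda^+$ to $u$ and using Lemma \ref{lm.dynmeas}. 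To repair your proof you would either have to supply an independent argument that $R^{0,V}u<\infty$ q.e.\ (which I do not see how to do under the stated hypotheses alone), or replace the direct cancellation by the $\alpha\to 0^+$ limiting scheme above.
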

\begin{proof}
Observe that \eqref{eq.main4} follows directly from \eqref{eq.main3}. 
Indeed,
\[
\hat u(x)+R^{0,V}\lambda^+(x)\ge \mathbb E^{0}_x \hat u(X_{\tau_V})\ge  \mathbb E^{0}_x \mathbf1_{D\setminus V}(X_{\tau_V})\hat u(X_{\tau_V})
\ge  \inf_{y\in D\setminus V} \hat u(y) \mathbb P^0_x (\tau_V<\tau_D),   \quad x\in V.
\]
The rest of the proof shall  be divided into four steps. 

{\bf Step 1}($N$ is polar). First observe that since $\mathbb R^d$, as a separable metric space, is second countable,
we may chose a countable   base $(V_n)_{n\ge 1}$   of open subsets of $\mathbb R^d$, and for any such base we have
\[
N=D\setminus \bigcup_{V_n\subset\subset D} \{x\in V_n: R^{0,V_n}\lambda^+(x)<\infty\}.
\]
By the assumptions made each $R^{0,V_n}\lambda^+$ is finite a.e. in $V_n$.
Consequently, by  \cite[Proposition VI.2.3]{BG} $R^{0,V_n}\lambda^+<\infty$ q.e. in $V_n$ for any $n\ge 1$.
This in turn combined with the above equality gives that $N$ is polar.

{\bf Step 2}(fine continuity). 
By Theorem \ref{th.main1} there exists  $\tilde u$ that is  a q.e. finely continuous $\ell^d$-version of 
$u$ such that   for any open $V\subset\subset D$ and $\kappa>\kappa_0$,
\[
 \tilde u(x)+R^{0,V}_\kappa\lambda^+(x) =\mathbb E^{\kappa}_x \tilde u(X_{\tau_V})+R^{0,V}_\kappa\mu_0(x)+R^{0,V}_\kappa\sigma_K(x)+\kappa R^{0,V}_\kappa \tilde u(x)+R^{0,V}_\kappa\lambda^-(x)
\quad \text{q.e. in } V.
\]
The second term on the left-hand side and each term on the right-hand side of the  
above equality is a $(P^{\kappa,V}_t)$-excessive function, hence finely continuous on $V$.
We denote all the terms on the right-hand side of the  
above equality by $w_i,\, i=1,\dots, 5$ starting from the most left term. 
By the definition of $N$, $R^{0,V}_{\kappa}\lambda^+$ is finite on $D\setminus N$ for any open $V\subset\subset D$.
Thus, letting $\hat u:=\sum_{i=1}^5w_i-R^{0,V}_\kappa\lambda^+$ on $V\setminus N$ and zero on $N$,
we have that $\tilde u=\hat u$ q.e. and 
\begin{equation}
\label{eq.point1}
\hat  u(x)=\mathbb E^\kappa_x \hat  u(X_{\tau_V})+R^{0,V}_\kappa\mu_0(x)+R^{0,V}_\kappa\sigma_K(x)+\kappa R^{0,V}_\kappa \hat u(x)-R^{0,V}_\kappa\lambda(x) \quad  x\in V\setminus N.
\end{equation}
Denote for a moment  so constructed  $\hat u$  by $\hat u_V$. Now let $V_1\subset \subset D$
be an   open set and  $V\subset V_1$.  Let $\hat u_{V_1}$ be the function constructed in the same way as $\hat u_V$
but with $V$ replaced by $V_1$. Clearly, $\hat u_{V_1}$ is finely continuous on $V\setminus N$. 
Therefore, by \eqref{eq.ppp1}, $\hat u_{V}=\hat u_{V_1}$ on $V\setminus N$.
Consequently, there exists a finely continuous function $\hat u$ on $D$
such that \eqref{eq.point1} holds for any  open $V\subset\subset D$.

In case for any open $V\subset\subset D$ there exists $\kappa\ge 0 $ such that 
$R^{0,V}_{\kappa}\lambda^+(x)<\infty,\, x\in V$ we may obviously take $N=\emptyset$ in the construction of $\hat u$
($\kappa$ may vary when changing $V$).

{\bf Step 3} (main formula: the case $\lambda^+\equiv 0$) 

We additionally assume that $\lambda^+\equiv 0$. In this case $N=\emptyset$.
Let $V$ be an  open subset of $D$ such that $V\subset\subset D$.
By the assumption that  we made and \eqref{eq.point1} $\hat u(x)\ge \kappa R^{0,V}_\kappa \hat u(x),\, x\in V$
for any $\kappa>\kappa_0$. As a result, by the resolvent identity and fine continuity of $\hat u$, we obtain that 
$\hat u$ is $(P^{0,V}_t)$-excessive. Let $\hat u(x)<\infty$ and $\alpha\in (0,\kappa]$.
Notice that, by the resolvent identity,  for any positive Borel measure $\beta$
if $R^{0,V}_\alpha R^{0,V}_\kappa\beta(x)+R^{0,V}_\kappa\beta(x)<\infty$, then $R^{0,V}_\alpha\beta(x)<\infty$.
Consequently,  by \eqref{eq.point1} and since $\alpha R^{0,V}_\alpha\hat u(x)\le \hat u(x)<\infty$,
one concludes that $R^{0,V}_\alpha\beta(x)<\infty$ with $\alpha\in (0,\kappa]$ and $\beta=\mu_0,\sigma_K,\lambda^-$.
Keeping this in mind, we put  
 $R^{0,V}_\alpha$ to both sides of \eqref{eq.point1} and apply  the resolvent identity to get 
\[
\begin{split}
R^{0,V}_\alpha\hat  u(x)&=R^{0,V}_\alpha(\mathbb E^\kappa_\cdot \hat  u(X_{\tau_V}))(x)\\&\quad
+\frac{1}{\kappa-\alpha}(R^{0,V}_\alpha\mu_0(x)-R^{0,V}_\kappa\mu_0(x))
+\frac{1}{\kappa-\alpha}(R^{0,V}_\alpha\sigma_K(x)-R^{0,V}_\kappa\sigma_K(x))\\&\quad+\frac{\kappa}{\kappa-\alpha}(R^{0,V}_\alpha \hat u(x)-R^{0,V}_\kappa \hat u(x))+\frac{1}{\kappa-\alpha}(R^{0,V}_\alpha\lambda^-(x)-R^{0,V}_\kappa\lambda^-(x))\\&
=[R^{0,V}_\alpha(\mathbb E^\kappa_\cdot \hat  u(X_{\tau_V}))(x)+\frac{\kappa}{\kappa-\alpha}R^{0,V}_\alpha \hat u(x)+\frac{1}{\kappa -\alpha}\mathbb E_x\hat u(X_{\tau_V})]\\&\quad
-\frac{1}{\kappa-\alpha}[R^{0,V}_\kappa\mu_0(x)+R^V\sigma_K(x)-R^{0,V}_\kappa\lambda^-(x)+\mathbb E^\kappa_\cdot \hat  u(X_{\tau_V}))(x)-\kappa R^{0,V}_\kappa \hat u(x))]
\\&\quad+\frac{1}{\kappa-\alpha}(R^{0,V}_\alpha\mu_0(x)+R^{0,V}_\alpha\sigma_K(x)+R^{0,V}_\alpha\lambda^-(x))\\&
=[R^{0,V}_\alpha(\mathbb E^\kappa_\cdot \hat  u(X_{\tau_V}))(x)+\frac{\kappa}{\kappa-\alpha}R^{0,V}_\alpha \hat u(x)+\frac{1}{\kappa -\alpha}\mathbb E^\kappa_x\hat u(X_{\tau_V})]-\frac{1}{\kappa-\alpha}\hat u(x)
\\&\quad+\frac{1}{\kappa-\alpha}(R^{0,V}_\alpha\mu_0(x)+R^{0,V}_\alpha\sigma_K(x)+R^{0,V}_\alpha\lambda^-(x)).
\end{split}
\]
In the last equation we used \eqref{eq.point1}. Consequently, multiplying both sides of the above equality by
$(\kappa-\alpha)$, rearranging the terms and next applying   Lemma \ref{lm.id1} yield
\[
\begin{split}
\hat  u(x)-\alpha R^{0,V}_\alpha \hat u(x) &=[(\kappa-\alpha) R^{0,V}_\alpha(\mathbb E_\cdot \hat  u(X_{\tau_V}))(x)+\mathbb E_x\hat u(X_{\tau_V})]\\&\quad
+R^{0,V}_\alpha\mu_0(x)+R^{0,V}_\alpha\sigma_K(x)+R^{0,V}_\alpha\lambda^-(x)\\&=
\mathbb E^0_x \hat  u(X_{\tau_V})
+R^{0,V}_\alpha\mu_0(x)+R^{0,V}_\alpha\sigma_K(x)+R^{0,V}_\alpha\lambda^-(x),\quad x\in V.
\end{split}
\]
Letting $\alpha\to 0^+$ and using \eqref{eq.exgreent} give \eqref{eq.main3} for any $x\in V$ such that $\hat u(x)<\infty$.
Since both  the left and the right-hand side of \eqref{eq.main3} are $(P^{0,V}_t)$-excessive, hence finely continuous,
one obtains by \eqref{eq.ppp1} the desired equality.

{\bf Step 4} (the general case).
Let $D_\varepsilon \subset\subset D$ be an open set such that $K\subset D_\varepsilon$.
Set $w_\varepsilon(x):= R^{0,D_\varepsilon}\lambda^+(x),\, x\in D_\varepsilon$.
By the assumptions of the theorem $w_\varepsilon$ is finite on $D_\varepsilon\setminus N$ and $w_\varepsilon\in L^1(D_\varepsilon)$.
Let $\eta\in C_c^\infty(D_\varepsilon)$. Applying   the resolvent identity one concludes 
\[
(w_\varepsilon,-L_\kappa^*\eta)=(R^{0,D_\varepsilon}_\kappa\lambda^++\kappa R^{0,D_\varepsilon}_\kappa w_\varepsilon ,-L^*_\kappa\eta)=
(\lambda^+,\eta)+\kappa(w_\varepsilon,\eta).
\]
Consequently, 
\[
(w_\varepsilon,-L_0^*\eta)=(\lambda^+,\eta),\quad \eta\in C_c^\infty(D_\varepsilon).
\]
As a result, $v_\varepsilon:=u+w_\varepsilon$ is a non-negative distributional solution to
\[
-L_0v-\lambda^-\ge 0\quad\text{in }\, D_\varepsilon \setminus K.
\]
Therefore, by Step 3, for any $V\subset\subset D_\varepsilon$,
\[
\hat v_\varepsilon(x)=\mathbb E^{0}_x \hat v_\varepsilon (X_{\tau_V})+R^{0,V}\mu_0(x)+R^{0,V}\sigma_K(x)+R^{0,V}\lambda^-(x) \quad x\in V,
\]
(note that $\sigma_K$  in Theorem \ref{th.main1} does not depend on $D$ as long as $K\subset D$).
Since, by Step 3,  $\hat v_\varepsilon$ is $(P^{0,D_\varepsilon}_t)$-excessive, we have
\[
\hat v_\varepsilon(x)= \lim_{\alpha\to \infty} \alpha R^{0,D_\varepsilon}_\alpha(\hat v_\varepsilon)(x)=
 \lim_{\alpha\to \infty} \alpha R^{0,D_\varepsilon}_\alpha(u+w_\varepsilon)(x)
 = \lim_{\alpha\to \infty} \alpha R^{0,D_\varepsilon}_\alpha(u)(x)+w_\varepsilon(x)
\]
for any $x\in D_\varepsilon\setminus N_\varepsilon$, where  $N_\varepsilon:=\{x\in D_\varepsilon: w_\varepsilon(x)=\infty\}$.
We let $\hat u_\varepsilon (x):= \lim_{\alpha\to \infty} \alpha R^{0,D_\varepsilon}_\alpha(u)(x)$
for any $x\in D_\varepsilon\setminus N_\varepsilon$ and zero on $N_\varepsilon$ (note that $N_\varepsilon\subset N$).
Clearly, $\hat u_\varepsilon$ is finely continuous on $D_\varepsilon\setminus N_\varepsilon$ as
\[
\hat u_\varepsilon(x)=\hat v_\varepsilon(x)-w_\varepsilon(x),\quad x\in D_\varepsilon\setminus N_\varepsilon,
\]
both $\hat v_\varepsilon, w_\varepsilon$ are $(P^{0,D_\varepsilon}_t)$-excessive, hence finely continuous in $D_\varepsilon$,
and $w_\varepsilon$ is finite on $D_\varepsilon\setminus N_\varepsilon$.
Now, by Lemma \ref{lm.dynmeas}
\[
\hat u_\varepsilon(x)+R^{0,V}\lambda^+(x)=\mathbb E^{0}_x \hat u_\varepsilon  (X_{\tau_V})+R^{0,V}\mu_0(x)+R^{0,V}\sigma_K(x)+R^{0,V}\lambda^-(x) \quad x\in V\setminus N_\varepsilon.
\]
Since $\hat u_\varepsilon $ and $\hat u$ constructed in Step 2 are finely continuous $\ell^d$-versions of $u$ in $D_\varepsilon\setminus N$,
we conclude by \eqref{eq.ppp1} that $\hat u_\varepsilon=\hat u$ on $D_\varepsilon\setminus N$. 
By arbitrariness  of $D_\varepsilon$, we get \eqref{eq.main3} on $V\setminus N$ for any open $V\subset\subset D$.
Of course, \eqref{eq.main3} holds on $N$ as well since on this set the left hand side gives infinity. 
\end{proof}

\begin{remark}
Observe that, letting $\varphi\equiv 1$, we have
\[
w_V(x):=\mathbb P^0_x(\tau_V<\tau_D)=\mathbb E^{0,D}_x\varphi(X_{\tau_V}),\quad x\in D.
\]
By \cite[Proposition 12.15]{Sharpe} $w_V$ is a $(P^{0,D}_t)$-excessive function.
Thus, whenever the {\em strong maximum principle} holds  for $(P^{\kappa,D}_t)$:
\begin{equation}
\label{smp}
f\in p\mathscr B_b(D), \, R^{\kappa,D}f(x)=0 \text{ for some } x\in D\quad\Rightarrow\quad R^{\kappa,D}f\equiv 0\text{ in }D,
\end{equation}
then there exists $c>0$ such that $w_V(x)\ge c,\, x\in V$. Indeed, suppose that $w_V(x_0)=0$ for some $x_0\in D$.
Since $w_D$ is  $(P^{0,D}_t)$-excessive , we have $\alpha R^{\alpha,D}w_V\nearrow w_V$ in $D$.
Since $w_V(x_0)=0$ we have $\alpha R^{\alpha,D}w_V\nearrow w_V(x_0)=0$. As a result, by the strong maximum principle,  $\alpha R^{\alpha,D}w_V\equiv 0$ in $D$, which implies that $w_V\equiv 0$. 
On the other hand $w_V(x)=1,\, x\in D\setminus \overline V$, a contradiction. Therefore $w_V$ is 
strictly positive in $D$. By the strong Feller property $w_V$ is lower semicontinuous in $D$,
which in turn gives that $\inf_{x\in V}w_V(x)>0$.
\end{remark}

\begin{remark}
Observe that if for any $x\in D$, $G_D(x,y)>0$ for a.e. $y\in D$, then \eqref{smp} holds. 
Therefore, by  \cite[Theorem 2.5]{KK}, $\inf_{x\in V}w_V(x)>0$ for the operator of Example \ref{ex11}. 
\end{remark}

\begin{remark}
Observe that 
\[
w_V(x)=\mathbb P^0_x(\tau_V<\tau_D)=\mathbb P^0_x(X_{\tau_V}\in D\setminus V)\ge \mathbb P^0_x(X_{\tau_V}\in D\setminus \overline{V})= \int_VG_V(x,y)\nu_y(D\setminus \overline{V})\,dy.
\]
If we knew that $\nu_y(D\setminus \overline{V})>0$ a.e. in $y\in V$, then we would have that $w_V(x)>0,\, x\in V$.
We see that $w_V(x)=1,\, x\in (D\setminus V)^r$ (so called regular points for $D\setminus V$; see \cite[Definition I.11.1]{BG}). By \cite[Proposition II.3.3]{BG}, $(D\setminus V)\setminus(D\setminus V)^r$
is semipolar, and by the very definition of regular points,  $D\setminus \overline{V}\subset (D\setminus V)^r\subset (D\setminus V)$
(see the comments following  \cite[Definition I.11.1]{BG}). We have
\[
D=V\cup (D\setminus V)^r\cup[(D\setminus V)\setminus(D\setminus V)^r].
\]
Since $w_V$ is $(P^{0,D}_t)$-excessive
\[
\mathbb E^{0}_x\int_0^{\tau_D}w_V(X_s)\,ds=R^{0,D}_1w_V(x)\le w_V(x),\quad x\in D.
\]
By \cite[Proposition II.3.4]{BG}, the set $\{t\ge 0: X_t\in (D\setminus V)\setminus(D\setminus V)^r\}$ is a.s. countable.
Consequently, $R^{0,D}_1w_V(x)>0,\, x\in D$, which implies that $w_V(x)>0,\, x\in D$. Since $w_V$
is lower semicontinuous, we conclude that $\inf_{x\in V}w_V(x)>0$. In particular, the last relation holds true for the operator of
Example \ref{ex12}.
\end{remark}

\begin{corollary}
Let $\hat u$  and $V$ be as in the assertion of Theorem \ref{th.main2}. 
\begin{enumerate}[1)]
\item If $\nu\equiv 0$ (i.e. $A$ is a local operator), then 
\[
\hat u(x)\ge \inf_{y\in D\setminus V} \hat u(y),\quad x\in V
\]
\item In general, we have 
\[
\hat u(x)\ge \inf_{y\in D\setminus V} \hat u(y)\Big[1- \int_{V}G_V(x,y)\nu_y(D^c)\,dy\Big]
\ge \inf_{y\in D\setminus V} \hat u(y)\Big[1- \sup_{y\in V}\nu_y(D^c) \mathbb E_x\tau_V\Big] ,\quad x\in V.
\]
\end{enumerate}
\end{corollary}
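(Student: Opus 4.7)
The plan is to apply inequality \eqref{eq.main4} from Theorem \ref{th.main2}, which under the implicit hypothesis $\lambda^+\equiv 0$ (matching the maximum principle setting) reads
\[
\hat u(x)\ge \inf_{y\in D\setminus V}\hat u(y)\,\mathbb P^0_x(\tau_V<\tau_D),\quad x\in V.
\]
Both parts of the corollary will then follow by computing or bounding the probability $\mathbb P^0_x(\tau_V<\tau_D)$.

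For part 1), I would argue as follows. When $\nu\equiv 0$, the operator $A$ reduces to a second-order local operator, and the associated L\'evy process (equivalently, the Feller process generated by $A-b\cdot\nabla$) has continuous sample paths $\mathbb P^0_x$-a.s. Since $V\subset\subset D$, i.e.\ $\overline V\subset D$, a continuous path that exits $V$ at time $\tau_V$ must land on $\partial V$, hence $X_{\tau_V}\in \partial V\subset D$, so $\tau_V<\tau_D$ holds $\mathbb P^0_x$-a.s. (note that $\mathbb P^0_x(\tau_V<\infty)=1$ by \eqref{eq.exgreen}). Thus $\mathbb P^0_x(\tau_V<\tau_D)=1$, and plugging this into the main inequality yields 1).

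For part 2), the key identity is the decomposition $\{\tau_V<\tau_D\}=\{X_{\tau_V}\in D\}$ (up to a null set, using \eqref{eq.exgreen}), and consequently
\[
\mathbb P^0_x(\tau_V<\tau_D)=1-\mathbb P^0_x(X_{\tau_V}\in D^c).
\]
Since $V\subset\subset D$, the set $D^c$ is disjoint from $\overline V$, so the Ikeda--Watanabe formula (already invoked in the proof of Lemma \ref{lm.hr1}) applies and gives
\[
\mathbb P^0_x(X_{\tau_V}\in D^c)=\int_V G_V(x,y)\,\nu_y(D^c)\,dy,\quad x\in V.
\]
Combining these two displays with \eqref{eq.main4} yields the first inequality of 2). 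The second inequality follows from the crude pointwise bound $\nu_y(D^c)\le \sup_{y\in V}\nu_y(D^c)$ together with the identity
\[
\int_V G_V(x,y)\,dy=R^{0,V}\mathbf 1(x)=\mathbb E^0_x\tau_V,
\]
which gives the claimed upper bound on $\int_V G_V(x,y)\nu_y(D^c)\,dy$.

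The only mildly delicate point is the passage from $\mathbb P^0_x(\tau_V<\tau_D)$ to $1-\mathbb P^0_x(X_{\tau_V}\in D^c)$: one needs to exclude the possibility that $X_{\tau_V}\in\partial V$ contributes ambiguously (for non-local processes the exit can happen by jumping, or, at an irregular point of $V^c$, by approaching $\partial V$). However, since $\overline V\subset D$, the event $\{X_{\tau_V}\in\overline V\}$ is contained in $\{\tau_V<\tau_D\}$ anyway, so the set-theoretic identity $\{\tau_V<\tau_D\}=\{X_{\tau_V}\in D\}$ up to a $\mathbb P^0_x$-null set is immediate, and Ikeda--Watanabe is applied to the complement $D^c\subset \overline V^c$, where the formula is unambiguous. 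No further obstacle is expected; the corollary is essentially a repackaging of \eqref{eq.main4} through the Ikeda--Watanabe identity.
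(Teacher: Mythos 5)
Your proposal is correct and follows the same route the paper intends: the corollary is stated without proof, but it is exactly the combination of the maximum principle \eqref{eq.main4} (under the implicit hypothesis $\lambda^+\equiv 0$, as in the introduction) with the Ikeda--Watanabe identity $\mathbb P^0_x(\tau_V<\tau_D)=1-\int_V G_V(x,y)\nu_y(D^c)\,dy$, the identity $\int_V G_V(x,\cdot)=\mathbb E^0_x\tau_V$, and path continuity giving $w_V\equiv 1$ in the local case. Your handling of the boundary/exit-position subtlety and of \eqref{eq.exgreen} matches the paper's own remarks preceding the corollary.
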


\subsection*{Acknowledgements}
{\small This work was supported by Polish National Science Centre
(Grant No.  2022/45/B/ST1/01095).}


\begin{thebibliography}{32}




\bibitem{ADGW}
Ao, W.,  DelaTorre, A., González, M.,  Wei, J.:
A gluing approach for the fractional Yamabe problem with isolated singularities.
{\em J. Reine Angew. Math.} {\bf 763} (2020) 25--78.

\bibitem{Aviles}
Aviles, P.: A study of the singularities of solutions of a class of nonlinear elliptic partial
differential equations. {\em  Comm. Partial Differential Equations} {\bf 7} (1982) 609--643.

\bibitem{BP}
Baras, P., Pierre P.: Singularites eliminables pour des equations semi-lineaires.
{\em Ann. Inst. Fourier} {\bf 34} (1984) 182--206.

\bibitem{Bers}
Bers. L., Isolated singularities of minimal surfaces. {\em  Ann. of Math.} {\bf 53} (1951) 364--386.


\bibitem{Bertoin}
Bertoin, J.: {\em  L\'evy processes.} Cambridge University Press (1996).

\bibitem{BCR}
Beznea, L., C\^impean, I., and R\"ockner, M.:
Irreducible recurrence, ergodicity, and extremality of invariant measures for resolvents.
{\em  Stoch. Proc. and their Appl.} {\bf 128} (2018) 1405--1437


\bibitem{bil}
Billingsley, P.:  {\em Convergence of probability measures. }
Second edition. Wiley Series in Probability and Statistics: Probability and Statistics. 
Wiley-Interscience Publication. John Wiley \& Sons, Inc., New York (1999).

\bibitem{BG}
Blumenthal, R. M.,   Getoor, R. K.: {\em Markov processes and potential
theory}, Academic Press, New York and London (1968).

\bibitem{BG1}
Blumenthal, R. M.,   Getoor, R. K.: Sample functions of stochastic processes with stationary independent increments.
{\em  J. Math. Mech.} {\bf  10} (1961)  493--516.

\bibitem{BSK}
Bogdan, K. Sztonyk, P.; Knopova, V.: 
Heat kernel of anisotropic nonlocal operators.
{\em Doc. Math.} {\bf 25} (2020) 1–54.

\bibitem{Bocher}
B\^ocher, M.:  Singular points of functions which satisfy partial differential equations of the elliptic type.
{\em Bull. Amer. Math. Soc.} {\bf 9} (1903) 455--465

\bibitem{BSW}
B\"ottcher, B., Schilling, R., Wang, J.: L\'evy Matters III. L\'evy-Type Processes: Construction,
Approximation and Sample Path Properties. {\em Lecture Notes in Math.} {\bf 2099}
Springer, Cham (2013).


\bibitem{BL}
Brezis, H.,  Lions, P.-L.:  A note on isolated singularities for linear elliptic equations.
{\em Adv. Math. Suppl. Stud.}  (1981).

\bibitem{BN}
Brezis, H.  Nirenberg, L., Removable singularities for nonlinear elliptic equations.
{\em Topol. Methods Nonlinear Anal.}  {\bf 9} (1997) 201--219.

\bibitem{BV}
Brezis, H.,  Veron, L.:
Removable singularities for some nonlinear elliptic equations.
{\em Arch. Rational Mech. Anal.} {\bf 75} 1--6 (1980).








\bibitem{CGS}
Caffarelli, L. A.,    Gidas, B.,  Spruck, J.:  Asymptotic symmetry and local behavior
of semilinear elliptic equations with critical Sobolev growth. {\em  Comm. Pure Appl. Math.}{\bf  42}
(1989) 271--297.


\bibitem{CJS}
Caffarelli, L.,  Jin, T.,  Sire, Y.,  Xiong, J.:
Local analysis of solutions of fractional semi-linear elliptic equations with isolated singularities.
{\em Arch. Ration. Mech. Anal.} {\bf 213} (2014) 245–268.

\bibitem{CLN}
Caffarelli, L., ; Li, Y., Nirenberg, L.: 
Some remarks on singular solutions of nonlinear elliptic equations III: viscosity solutions including parabolic operators.
{\em Comm. Pure Appl. Math.} {\bf  66} (2013) 109--143.



\bibitem{CH}
Chen, H., V\'eron, L.:Singularities of fractional Emden's equations via Caffarelli-Silvestre extension.
{\em J. Differential Equations} {\bf  361} (2023) 472--530.








\bibitem{CL}
Chen, C. C.,  Lin, C.S.: 
Local behavior of singular positive solutions of semilinear elliptic equations with Sobolev exponent.
{\em Duke Math. J.} {\bf 78} (1995) 315--334.

\bibitem{CQ}
Chen, H.,  Quaas, A.: 
Classification of isolated singularities of nonnegative solutions 
to fractional semi-linear elliptic equations and the existence results.
{\em J. Lond. Math. Soc.} {\bf  97} (2018) 196–221.


\bibitem{Chung}
Chung, K. L.,  Walsh, J.: 
{\em Markov processes, Brownian motion, and time symmetry.}
Second edition, Grundlehren Math. Wiss.  {\bf 249} (Fundamental Principles of Mathematical Sciences)
Springer, New York  (2005) 

\bibitem{DS}
DeGiorgi, E.  Stampacchia, G.:  Sulle singolarita eliminabili delle ipersuperficie
minimali.{\em  Atti Accad. Naz. Lincei Rend. Cl. Sci. Fis. Mat. Natur.} {\bf 38} (1965) 352--357.


\bibitem{DDGW}
DelaTorre, A., del Pino, M.,  González, M.,  Wei, J.:
Delaunay-type singular solutions for the fractional Yamabe problem.
{\em Math. Ann.} {\bf 369} (2017) 597--626.

\bibitem{DG}
DelaTorre, A., Gonz\'alez, M.:
Isolated singularities for a semilinear equation for the fractional Laplacian arising in conformal geometry.
{\em Rev. Mat. Iberoam.} {\bf 34} (2018) 1645--1678.

\bibitem{DellacherieMeyer}
Dellacherie, C.,  Meyer, P.A.:  {\em Probabilities and Potential C}.
North-Holland, Amsterdam (1988).

\bibitem{DellacherieMeyer1}
Dellacherie, C.,  Meyer, P.A.:  {\em Probabilities and Potential B}.
North-Holland, Amsterdam (1982).

\bibitem{DL}
DiPerna, R. J.,  Lions, P.-L.:
Ordinary differential equations, transport theory and Sobolev spaces.
{\em Invent. Math.} {\bf 98} (1989)  511--547.

\bibitem{DKK}
J.J. Duistermaat,
J.A.C. Kolk: {\em Distributions Theory and Applications.}
Springer New York Dordrecht Heidelberg London (2010).


\bibitem{Getoor}
Getoor, R.K.:  Measure perturbations of Markovian semigroups, {\em Potential Anal.} {\bf 11} (1999) 101--133.
 
 \bibitem{GiSp}
Gidas, B.,   Spruck, J.: Global and local behavior of positive solutions of nonlinear elliptic
equations.{\em  Comm. Pure Appl. Math.} {\bf  34} (1981) 525–598.


\bibitem{GS}
 Gilbarg, D.,  Serrin, J.: On isolated singularities of solutions of second order elliptic differential equations
{\em J. Analyse Math}. {\bf 4} (1955/56).



\bibitem{GMS}
Gonz\'alez, M., Mazzeo, R., Sire, Y.: Singular solutions of fractional order conformal Laplacians. 
{\em J. Geom. Anal.} {\bf 22} 845--863 (2012).

\bibitem{HN}
Hansen, W.,  Netuka, I.: Semipolar Sets and Intrinsic Hausdorff Measure
{\em Potential Analysis} {\bf 51} (2019) 49--69.

\bibitem{HW}
Hartman, P.,  Wintner, A.: 
On the local behavior of solutions of non-parabolic partial differential equations I,II,III, 
{\em Amer. J. Math.} {\bf 75}, {\bf 76}, {\bf 77} (1953),(1954),(1955).



\bibitem{IW}
Ikeda, N,  Watanabe, S.: On some relations between the harmonic measure and the L\'evy measure for a certain class of Markov processes. {\em  J. Math. Kyoto Univ.} {\bf  2} (1962) 79-95.

\bibitem{Ishikawa}
Ishikawa, Y.: Density estimate in small time for jump processes with singular L\'evy measures and its support property.
{\em Funct. Differ. Equ.}  {\bf 8} (2001)  273–285.

\bibitem{JDSX}
Jin, T.,  de Queiroz, O. S., Sire, Y., Xiong, J.:
On local behavior of singular positive solutions to nonlocal elliptic equations.
{\em Calc. Var. Partial Differential Equations} {\bf 56} (2017).

\bibitem{Kallenberg}
Kallenberg, O.: {\em Foundations of modern probability.}  Second edition. Probability and its Applications (New York) Springer-Verlag, New York (2002).

\bibitem{Kurtz}
Kurtz, T.G.  Equivalence of Stochastic Equations and Martingale Problems. 
In: Crisan, D. (eds) Stochastic Analysis 2010. Springer, Berlin, Heidelberg (2011).

\bibitem{kuhn} 
K\"uhn, F.:  On martingale problems and Feller processes.
 {\em Electron. J. Probab.} {\bf 23} (2018)  1-18.



\bibitem{KK}
Knopova, V.,  Kulik, A.:
Parametrix construction of the transition probability density of
the solution to an SDE driven by $\alpha$-stable noise.
{\em Annales de l’Institut Henri Poincar\'e - Probabilit\'es et Statistiques}  {\bf  54}
(2018) 100--140.

\bibitem{KKS}
Knopova, V.,  Kulik, A., Schilling, R. L.:  Construction and heat kernel estimates of general stable-like Markov processes.
{\em Dissertationes Math.} {\bf  569} (2021).

\bibitem{Kolokoltsov}
Kolokoltsov, V.N.: {\em Markov Processes, Semigroups and Generators. }
 Walter de Gruyter GmbH \& Co. KG, Berlin/New York (2011).
 
 \bibitem{KPP}
Kulik, A., Peszat, S.,  Priola, E.: 
Gradient formula for transition semigroup corresponding to stochastic equation 
driven by a system of independent L\'evy processes. 
{\em NoDEA Nonlinear Differential Equations Appl.} {\bf 30} (2023).
 

\bibitem{Congming}
Li, C.:  Local asymptotic symmetry of singular solutions to nonlinear elliptic equations.
{\em Invent. Math.} {\bf 123} (1996) 221--231.

\bibitem{Labutin}
Labutin, D. A.:
Removable singularities for fully nonlinear elliptic equations.
{\em Arch. Ration. Mech. Anal.} {\bf 155} (2000) 201--214.

\bibitem{LWX}
Li, C. Wu, Z.,  Xu, H.:
Maximum principles and B\^ocher type theorems.
{\em Proc. Natl. Acad. Sci. USA} {\bf  115} (2018) 6976--6979.



  
\bibitem{LLWX}
Li, C.,  Liu, C., Wu, Z., Xu, H.:
Non-negative solutions to fractional Laplace equations with isolated singularity.
{\em Adv. Math. } {\bf 373} (2020).


\bibitem{LW}
Li, J.,  Wan, F.: B\^ocher-type theorem on $n$-dimensional manifolds with conical metric.
{\em Proc. Amer. Math. Soc.} {\bf 147} (2019) 4527--4538.




\bibitem{LB}
Li, Y., Bao, J.:
Local behavior of solutions to fractional Hardy-H\'enon equations with isolated singularity
{\em Ann. Mat. Pura Appl.} {\bf  198} (2019) 41--59.

\bibitem{Lions}
Lions, P.-L.: Isolated singularities in semilinear problems
{\em J. Differential Equations} {\bf 38} (1980).

\bibitem{Pazy}
Pazy, A.: {\em Semigroups of Linear Operators and Applications to Partial Differential
Equations.} Springer-Verlag, New York (1983).

\bibitem{Picard}
Picard, J.: Density in small time at accessible points for jump processes.
{\em Stochastic Process. Appl.} {\bf  67} (1997) 251--279.

\bibitem{SW1}
Schilling, R.L.,   Wang, J.: Strong Feller continuity of Feller processes and semigroups. 
{\em Infin. Dimens. Anal. Quantum Probab. Relat. Top.}
{\bf 15}  (2012).

\bibitem{SW}
Serrin, J.,  Weinberger, H. F.:
Isolated singularities of solutions of linear elliptic equations.
{\em Amer. J. Math.} {\bf 88} (1966).

\bibitem{Serrin1}
Serrin, J.: Local behavior of solutions of quasi-linear equations.
{\em Acta Math.} {\bf 111} (1964) 247--302.


\bibitem{Serrin2}
Serrin, J.: Isolated singularities of solutions of quasi-linear equations.
{\em Acta Math.} {\bf 113} (1965) 219--240.


\bibitem{Sharpe}
Sharpe, M.: {\em General Theory of Markov Processes}, Academic Press, New
York (1988).


\bibitem{Situ}
Situ, R.: {\em Theory of Stochastic Differential Equations with Jumps
and Applications.} Mathematical and Analytical Techniques with
Applications to Engineering, Springer, New York, NY, USA (2005).


\bibitem{SZ}
Song, Y.,  Zhang, X.:
Regularity of density for SDEs driven by degenerate L\'evy noises.
{\em Electron. J. Probab.} {\bf  20}  (2015). 




\bibitem{Taylor}
Taylor, M.: 
B\^ocher's theorem with rough coefficients.
{\em Potential Anal.} {\bf  56} (2022) 65--86.

\bibitem{VV}
V\'azquez, J. L., V\'eron, L.:
Isolated singularities of some semilinear elliptic equations.
{\em J. Differential Equations} {\bf  60} (1985) 301--321.


\bibitem{Veron}
V\'eron, L.:
Solutions singuli\`eres d'\'equations elliptiques semilin\'eaires.
{\em C. R. Acad. Sci. Paris S\'er. A-B} {\bf 288} (1979)  A867--A869.


\bibitem{Wan}
Wan, F.: 
B\^ocher-type theorems for the Poisson's equation on manifolds with conical metrics.
{\em Calc. Var. Partial Differential Equations} {\bf  59} (2020).


\bibitem{WXZ}
Wang, F.-Y., Xu, L.,  Zhang, X.:
Gradient estimates for SDEs driven by multiplicative L\'evy noise.
{\em J. Funct. Anal.} {\bf  269} (2015) 3195--3219.


\bibitem{Veron1}
Veron, L., Singularities of solutions of second order quasilinear equations.
Longman, Harlow, 1996.


\bibitem{Xiong}
Xiong, J.:
The critical semilinear elliptic equation with isolated boundary singularities.
{\em J. Differential Equations} {\bf 263} (2017) 1907--1930.


\bibitem{YZ}
Yang, H., Zou, W.:
On isolated singularities of fractional semi-linear elliptic equations.
{\em Ann. Inst. H. Poincar\'e C Anal. Non Lin\'eaire} {\bf  38} (2021) 403--420.

















































































\end{thebibliography}
\end{document}